\newtheorem{theorem}{Theorem}
\newtheorem{definition}{Definition}
\newtheorem{lemma}{Lemma}
\newtheorem{remark}{Remark}
\newcommand{\be}{\begin{equation}}
\newcommand{\ee}{\end{equation}}
\newcommand{\ba}{\begin{align}}
\newcommand{\ea}{\end{align}}
\newcommand{\bea}{\begin{array}}
\newcommand{\eea}{\end{array}}
\def\EquationsBySection{\def\theequation
{\thesection.\arabic{equation}}%
\@addtoreset{equation}{section}}
\newcommand\old[1]{}
\newcommand{\pend}{\hfill \thicklines \framebox(6.6,6.6)[l]{}}
\renewenvironment{proof}{\noindent {\it  Proof.} \rm}{\pend}
\newtheorem{proposition}{Proposition}
\date{2019}
\begin{document}
\begin{frontmatter}



\title{Stochastic viscosity solutions for stochastic integral-partial differential equations and singular stochastic control}


\author{Jinbiao Wu\footnote{ Email: wujinbiao@csu.edu.cn}
 }

\address{ \small\it School of Mathematics and Statistics, \\
 \small\it Central South University,
Changsha 410083, Hunan, China}

\begin{abstract}
In this article, we mainly study stochastic viscosity solutions for a class of semilinear
 stochastic integral-partial differential equations (SIPDEs). We investigate a new class of generalized backward doubly stochastic differential equations (GBDSDEs) driven by two independent Brownian motions and an independent Poisson random measure, which involves an integral with respect to a c\`{a}dl\`{a}g increasing process. We first derive existence and uniqueness of the solution of GBDSDEs with general jumps. We then introduce the definition of stochastic viscosity solutions of SIPDEs and give a probabilistic representation for stochastic viscosity solutions of semilinear SIPDEs with nonlinear Neumann boundary conditions.
Finally, we establish stochastic maximum principles for the optimal control of a stochastic system modelled by a GBDSDE with general jumps.
\end{abstract}

\begin{keyword}

Stochastic viscosity solutions; Stochastic integral-partial differential equations; Generalized backward doubly SDEs; Singular stochastic control; Stochastic maximum principles


 \MSC 60H15 \sep 60H10  \sep 93E20
\end{keyword}

\end{frontmatter}

\section{Introduction}

The theory of viscosity solution for a partial differential equation (PDE) has been first introduced by
 \cite{Crandall1983} in 1983. The theory has brought in-depth impact on the modern theoretical mathematics and also has bloomed into an extremely important tool in various applied mathematical areas, especially in optimal control, finance,  operations research, and so
on. For a review of the main results
and methods, readers may refer  to the well-known papers by Lions \cite{Lions1988}, Crandall et al. \cite{Crandall1992}, Bianchini and Bressan \cite{Bianchini2005} and the books by Fleming and Soner \cite{Fleming2006} and by Bardi and Capuzzo-Dolcetta \cite{Bardi2008}.  Since the seminal works
by Lions and Souganidis \cite{Lions1998, LionsSouganidis1998,Lions2000}, there have been many efforts to develop the theory
of stochastic viscosity solutions. Generally speaking, they
provided two approaches. The first one is to use the so-called stochastic characteristics
(see Kunita \cite{Kunita1997}) to remove the stochastic integrals of stochastic partial differential equations (SPDEs). The stochastic viscosity solution can be studied $\omega$-wisely, either by defining test
functions along the characteristics, or by defining the stochastic viscosity solution via the
transformed $\omega$-wise PDEs (deterministic). Another approach is to approximate the Brownian path $\omega$ by smooth functions and define the weak solution as the limit, if
exist, of the solutions to the approximating equations which are deterministic PDEs.  Buckdahn and Ma \cite{Buckdahn2001, BuckdahnMa2001, Buckdahn2002} were the first to use the first approach to connect the stochastic viscosity solution of SPDEs with backward doubly stochastic differential equations (BDSDEs). The key idea is to  define the stochastic viscosity solution via the Doss-Sussman transformation (see, e.g., \cite{Sussmann1978}) that an SPDE can be converted to an ordinary PDE with random coefficients. Later, Boufoussi et al. \cite{Boufoussi2007} refered to the technique of Buckdahn and Ma \cite{Buckdahn2001}
 to give a probabilistic representation for stochastic viscosity solutions of semi-linear SPDEs with Neumann boundary conditions.
In recent years, Buckdahn et al. \cite{Buckdahn2015} studied viscosity solutions for a fairly large class of fully nonlinear SPDEs which can also be viewed as forward path dependent PDEs or rough PDEs. Ekren et al. \cite{Ekren2016a, Ekren2016b} provided a notion of
viscosity solutions for fully nonlinear parabolic path-dependent PDEs.  Keller and Zhang \cite{Keller2016} gave some results about pathwise It\^{o} calculus for rough paths which are very useful for studying stochastic viscosity solutions of SPDEs driven by rough paths. Qiu \cite{Qiu2018} introduced the notion of viscosity solution to a fully nonlinear stochastic Hamilton-Jacobi-Bellman equation.   Matoussi et al. \cite{Matoussi2019} investigated the links between the second order  BDSDEs and stochastic viscosity solutions of parabolic fully nonlinear SPDEs.
Frankly speaking, it is really tough to deal with stochastic viscosity solutions to SPDEs due to the tedious technical details. However, recent research developments in rough paths theory open another window for studying stochastic viscosity solutions. We hope to be able to address this issume in our future publications.

The purpose of this article is to give a probabilistic representation for the stochastic viscosity solution of the SIPDE with nonlinear Neumann boundary condition:
\begin{align*}
\left\{\begin{array}{l}
du(t,x)+\left[\mathcal{L}u(t,x)+f\left(t,x,u(t,x), (\nabla u\sigma)(t,x), \mathcal{A}u(t,x)\right)\right]dt\\
 \ \ \ \ \ \  \ \ \ \ + g(t,x)\overleftarrow{d}B(t)=0, \ \ \ (t,x)\in [0,T]\times \mho,\\
\frac{\partial u}{\partial \mathbf{n}}(t,x)+h(t,x,u(t,x))=0, \ \ \ (t,x)\in [0,T]\times \partial\mho,\\
u(T,x)=l(x), \ \ \ x\in \bar{\mho},
\end{array}
\right.
\end{align*}
where $B$ is a standard Brownian motion, $\mathbf{n}$ are inner normals at $x$,
$\mho$ is an open connected bounded domain of $\mathbb{R}^n$, $\mathcal{L}$ is an infinitesimal generator of some jump-diffusion process and $\mathcal{A}$ is an integral operator. Here $f,g,h$ and $l$ are some measurable functions.

To give a probabilistic formula for the solution of the above SIPDE with nonlinear Neumann boundary condition, we first study a new class of backward stochastic differential equations called GBDSDEs with general jumps:
\begin{align*}
\left\{
\begin{array}{lcl}
dY(t) &=& -f(t, Y(t), Z(t), J(t,\cdot))dt-g(t, Y(t), Z(t), J(t,\cdot))\overleftarrow{d}B(t)+Z(t)dW(t)\\
  &  & +\int_{\mathcal{E}}J(t, e)\tilde{N}(dt,de) -h(t, Y(t))dA(t), \ \ t\in[0,T],\\
 Y(T)&=&\xi,
\end{array}
\right.
\end{align*}
where $B$ and $W$ are two mutually independent standard Brownian motion processes, $\tilde{N}$ is an independent compensated Poisson random measure and $A$ is an increasing process. Since the celebrated work on nonlinear backward stochastic differential equations (BSDEs) by Pardoux and Peng \cite{Pardoux1990}, the interest for BSDEs has increasing rapidly, due to the many important applications in mathematical finance (see, e.g., \cite{Karoui1997}), stochastic control (see,  e.g., \cite{Yong1999}) and PDEs, especially the nonlinear Feynman-Kac formula and connections with viscosity solutions of PDEs (see,  e.g., \cite{Pardoux1998}). In order to give a probabilistic representation for the solution of
a class of quasi-linear SPDEs, Pardoux and Peng \cite{Pardoux1994} introduced a new kind
of BSDEs called BDSDEs where the integrals with respect to $B$ is a \lq\lq backward
It\^{o} integral\rq\rq, and the integrals with respect to $W$ is a
standard forward It\^{o} integral. Later, for the sake of giving a probabilistic formula for solutions of semilinear PDEs with Neumann boundary conditions,  Pardoux and Zhang \cite{PardouxZhang1998} introduced a new class of BSDEs called GBSDEs which involves an integral with respect to a continuous increasing process. In this paper, we combine the above two types of results and relate a new class of BSDEs with jumps called GBDSDEs with general jumps.

Inspired by the results of Buckdahn and Ma \cite{Buckdahn2001} and Boufoussi et al. \cite{Boufoussi2007}, we then consider the viscosity solution of the integral-partial differential equation (IPDE) with nonlinear Neumann boundary condition:
\begin{align*}
\left\{\begin{array}{l}
du(t,x)+\left[\mathcal{L}u(t,x)+f\left(t,x,u(t,x), (\nabla u\sigma)(t,x), \mathcal{A}u(t,x)\right)\right]dt=0, \ \ \ (t,x)\in [0,T]\times \mho,\\
\frac{\partial u}{\partial \mathbf{n}}(t,x)+h(t,x,u(t,x))=0, \ \ \ (t,x)\in [0,T]\times \partial\mho,\\
u(T,x)=l(x), \ \ \ x\in \bar{\mho}.
\end{array}
\right.
\end{align*}
Under mild conditions we prove that the solution of the GBSDE with jumps provides a viscosity solution of the above equation.

Finally, we investigate the singular optimal control problem for a stochastic
system whose state process is governed by a GBDSDE with general jumps. By the method of convex variation and
duality technique, we establish a sufficient and necessary
stochastic maximum principle for the stochastic system. To the best of our knowledge, Cadenillas and Haussmann \cite{Cadenillas1994} were the first to prove a maximum principle for singular stochastic optimal control. Later, Bahlali and Mezerdi \cite{Bahlali2005} established a general stochastic maximum principle for singular control problems by using a spike variation method. {\O}ksendal and Sulem  \cite{Oksendal2012} considered singular stochastic control of It\^{o}-L\'{e}vy processes and gave stochastic maximum principles. In particular, singular control is often related to impulse control. Recently, Wu \cite{Wu2019} studied the optimal impulse control problem of a currency with exchange rate dynamics whose state follows a geometric L\'{e}vy process.

The main novelty of the present work is manifested in the following two
distinguishing features. The first one is that we extend the theory of stochastic viscosity solutions of SPDEs to the SIPDEs with nonlinear Neumann boundary conditions which can be considered as a generalization of results in Buckdahn and Ma \cite{Buckdahn2001} and Boufoussi et al. \cite{Boufoussi2007}. More  precisely, we give some direct links between the stochastic viscosity solutions of  SIPDEs with nonlinear Neumann boundary conditions and the solutions of GBDSDEs with jumps which could be viewed as an extension of the nonlinear Feynman-Kac formula to SIPDEs. We would like to point out that our result is not an immediate extension of \cite{Buckdahn2001} and  \cite{Boufoussi2007}, since the integro-differential terms make the analysis more challenging. The second feature is that we not only prove the
existence and uniqueness of the solutions to the GBDSDEs with general jumps  but also establish the necessary and sufficient maximum
principles for singular stochastic optimal control of the GBDSDEs with general jumps for the first time.

This article is organized as follows. In Section 2, we
clarify all the notations to be used throughout the paper and provide some propositions which shall be used later. Section 3 is dedicated to the existence and uniqueness result for solutions to GBDSDEs with general jumps and prove some comparison theorems. Section 4 is devoted to a probabilistic representation for viscosity solutions of semi-linear IPDEs with nonlinear Neumann boundary conditions. In Section 5, we introduce the definition of stochastic viscosity solutions to semi-linear SIPDEs with nonlinear Neumann boundary conditions and give some direct links between the stochastic viscosity solutions of SIPDEs with nonlinear Neumann boundary conditions and the solutions of the GBDSDEs with jumps. Finally in Section 6, we formulate the singular stochastic optimal control problem of the GBDSDEs with general jumps and establish sufficient and necessary stochastic maximum principles.

\section{Preliminaries }\label{sec2}
In this section, we give the basic notation which will be used
throughout the paper.

We fix $T>0$ as an arbitrarily finite horizon. Let $(\Omega, \mathcal{F},  \mathbb{P})$ be a probability space. Let
$W(t)$ and $B(t)$ be two mutually independent standard Brownian motion
processes defined on $(\Omega, \mathcal{F}, \mathbb{P})$ with values
in $\mathbb{R}^d$ and $\mathbb{R}^m$ respectively.  Let
$\eta(t)$ be a pure
jump L\'{e}vy martingale on the probability space
$(\Omega, \mathcal{F}, \mathbb{P})$.
$\tilde{N}(dt,de):=N(dt,de)-\nu(de)dt$ is
the compensated jump measure of $\eta$, where $e\in \mathcal{E}:=\mathbb{R}^l\backslash\{0\}$.
Here, $\nu(de)$ is the L\'{e}vy measure of $\eta$
and $N(dt,de)$ is the jump measure of $\eta$. We assume
that the two Brownian motion processes and the pure jump
L\'{e}vy martingales are stochastically independent under
$\mathbb{P}$ and $\mathbb{F}:=\{\mathcal{F}_t\}_{0\leq t\leq T}$, where
$\mathcal{F}_t=\mathcal{F}_t^W\vee \mathcal{F}_t^N \vee
\mathcal{F}_{t, T}^B$, here for any stochastic process $X$,
$\mathcal{F}_{s, t}^X=\sigma\{X(r)-X(s); s\leq r\leq t\}\vee
\mathcal{N}$, $\mathcal{F}_t^X=\mathcal{F}_{0, t}^X$, where
$\mathcal{N}$ is the class of $\mathbb{P}$-null sets of
$\mathcal{F}$. One should note that the family of $\sigma$-algebras $\mathbb{F}$ is neither increasing nor decreasing, and hence it is not a filtration.
Let $\{A(t), 0\leq t\leq T\}$ be a one-dimensional increasing process satisfying $A(0)=0$ and  $A(t)$ is $\mathcal{F}_t$ measurable  for a.e. $t\in[0,T]$. We denote by
$|M|:=(M\cdot M)^\frac{1}{2}=(\mbox{tr}(MM^T))^\frac{1}{2}$ the norm
of a vector or a matrix $M$ and $M^*$ is the transpose of $M$.


For a generic Euclidean space $\mathbb{E}$ (if other Euclidean spaces are needed, we shall label them as $\mathbb{E}_1$, $\mathbb{E}_1$, $\cdots$, etc.), we introduce the following  notations or spaces:

$\bullet$  \ \ $\mathbb{F}^B:=\{\mathcal{F}_{t,T}^B\}_{0\leq t\leq T}$, $\mathbb{F}^{W\vee N}:=\{\mathcal{F}_{t}^W\vee\mathcal{F}_{t}^N\}_{0\leq t\leq T}$.

$\bullet$  \ \  $\mathcal{T}^B(0,T)$, the set of $\mathbb{F}^B$-stopping times $\tau$ such that $0\leq\tau\leq T$, a.s.

$\bullet$  \ \  $L^2(\mathcal{F}_T)$, the space of
 $\mathcal{F}_T$-measurable random variables
$\xi$ such that $E[|\xi|^2]<\infty$.

$\bullet$  \ \ $L^2(\mathbb{F},[0,T]; \mathbb{E})$: the space of $\mathbb{E}$-valued,
jointly measurable processes $\{X(t), t\in[0,
T]\}$ such that $E\left[\int_0^T|X(t)|^2dt\right]<\infty$ and $X(t)$ is $\mathcal{F}_t$-measurable for a.e. $t\in[0,T]$.

$\bullet$  \ \ $L^2(\mathbb{F},[0, T]; \nu, \mathbb{E})$,
the space of $\mathbb{E}$-valued,
jointly measurable processes
$\{X(t,e), (t,e)\in[0,T]\times \mathcal{E}\}$ such that
$E\left[\int_0^T\|X(t,\cdot)\|_\nu dt\right]<\infty$,
where $\|X(t,\cdot)\|_\nu=\left(\int_{\mathcal{E}}|X(t,e)|^2\nu(de)\right)^\frac{1}{2}$ and $X(t,e)$ is $\mathcal{F}_t$-measurable for a.e. $t\in[0,T]$ and $e\in\mathcal{E}$.

$\bullet$  \ \ $\mathcal{S}^2(\mathbb{F},[0, T]; \mathbb{E})$, the
space of $\mathbb{E}$-valued, c\`{a}dl\`{a}g
processes $\{X(t), t\in[0, T]\}$ such that
$\mathbb{E}\left[\sup\limits_{0\leq t\leq T}|X(t)|^2\right]<\infty$ and $X(t)$ is $\mathcal{F}_t$-measurable for a.e. $t\in[0,T]$.


$\bullet$  \ \  $L_\mu^2(A, \mathbb{F},[0,T];\mathbb{E})$,  the space of $\mathbb{E}$-valued,
jointly measurable processes $\{X(t), t\in[0,
T]\}$ such that $E\left[\int_0^Te^{\mu A(t)} |X(t)|^2dt+ \int_0^Te^{\mu A(t)}|X(t)|^2dA(t)\right]<\infty$ and $X(t)$ is $\mathcal{F}_t$-measurable for a.e. $t\in[0,T]$,
where $\mu\geq0$ and $L_{\mu}^2(\mathbb{F},[0,T];  \mathbb{E})$ the space of $\mathbb{E}$-valued, jointly measurable processes $\{X(t), t\in[0,T]\}$ such that $E\left[\int_0^Te^{\mu A(t)}|X(t)|^2dt\right]<\infty$, and $L_{\mu}^2(\mathbb{F},[0,T]; \nu, \mathbb{E})$,  the space of $\mathbb{E}$-valued, jointly measurable processes
$\{X(t,e), (t,e)\in[0,T]\times \mathcal{E}\}$ such that
$E\left[\int_0^T\int_{\mathcal{E}}e^{\mu A(t)}|X(t,e)|^2\nu(de)dt\right]<\infty$.

$\bullet$  \ \ $L^2(A, \mathbb{F},[0,T]; \mathbb{E})$: the space of $\mathbb{E}$-valued,
jointly measurable processes $\{X(t), t\in[0,
T]\}$ such that $E\left[\int_0^T|X(t)|^2dA(t)\right]<\infty$ and $X(t)$ is $\mathcal{F}_t$-measurable for a.e. $t\in[0,T]$.

$\bullet$  \ \  $V^2(\mathbb{F},[0,T]; \mathbb{E}_1\times \mathbb{E}_2\times \mathbb{E}_3):=\mathcal{S}^2(\mathbb{F},[0, T]; \mathbb{E}_1)\times L^2(\mathbb{F},[0,T]; \mathbb{E}_2)
\times L^2(\mathbb{F},[0, T];\nu, \mathbb{E}_3)$.

$\bullet$  \ \  $V_\mu^2(A, \mathbb{F},[0,T]; \mathbb{E}_1\times \mathbb{E}_2\times \mathbb{E}_3):=L_\mu^2(A, \mathbb{F},[0,T]; \mathbb{E}_1)\times L_{\mu}^2(\mathbb{F},[0,T]; \mathbb{E}_2)
\times L_{\mu}^2(\mathbb{F},[0,T]; \nu, \mathbb{E}_3)$.

$\bullet$  \ \  $C^{k,l}([0,T]\times\mathbb{E}; \mathbb{E}_1)$, the space of all $\mathbb{E}_1$-valued functions defined on $[0,T]\times\mathbb{E}$ which are $k$-times continuously differentiable in $t\in[0,T]$ and $l$-times continuously differentiable in $x\in\mathbb{E}$, and $C_b^{k,l}([0,T]\times\mathbb{E}; \mathbb{E}_1)$ denotes the subspace of $C^{k,l}([0,T]\times\mathbb{E}; \mathbb{E}_1)$ in which all functions have uniformly bounded partial derivatives.

$\bullet$  \ \ $\mathcal{C}^{k,l}(\mathcal{G},[0,T]\times\mathbb{E};\mathbb{E}_1)$ (resp. $\mathcal{C}_b^{k,l}(\mathcal{G},[0,T]\times\mathbb{E};\mathbb{E}_1)$), the space of $C^{k,l}(\mathcal{G},[0,T]\times\mathbb{E}; \mathbb{E}_1)$ (resp.  $C_b^{k,l}(\mathcal{G},[0,T]\times\mathbb{E}; \mathbb{E}_1)$ )-valued random variables that are $\mathcal{G}\otimes\mathcal{B}([0,T]\times\mathbb{R})$-measurable, where the sub-$\sigma$-field $\mathcal{G}\subseteq \mathcal{F}_T^B$.

$\bullet$  \ \ $\mathcal{C}^{k,l}(\mathbb{F}^B, [0,T]\times\mathbb{E}; \mathbb{E}_1)$ (resp. $\mathcal{C}_b^{k,l}(\mathbb{F}^B, [0,T]\times\mathbb{E}; \mathbb{E}_1)$), the space of $\mathcal{C}^{k,l}(\mathcal{F}_T^B, [0,T]\times\mathbb{E}; \mathbb{E}_1)$ (resp. $\mathcal{C}_b^{k,l}(\mathcal{F}_T^B, [0,T]\times\mathbb{E}; \mathbb{E}_1)$)-valued random fields $u$ such that for fixed $x\in\mathbb{E}$, the mapping $(\omega,t)\mapsto u(\omega,t,x)$ is $\mathbb{F}^B$-progressively measurable.

$\bullet$  \ \ For simplification of notations, when $\mathbb{E}=\mathbb{R}$, we shall omit $\mathbb{E}$ and just write $L^2(\mathbb{F},[0,T])$, etc, and denote $C([0,T]\times\mathbb{E}; \mathbb{E}_1)=C^{0,0}([0,T]\times\mathbb{E}; \mathbb{E}_1)$, etc.

We will use the following extension of It\^{o}'s formula which generalizes that in Pardoux and Peng \cite{Pardoux1994}.
\begin{proposition}\label{Ito}
let $\alpha\in \mathcal{S}^2(\mathbb{F},[0,T])$, $\beta\in
L^2(\mathbb{F},[0, T])$, $\gamma\in L^2(\mathbb{F},[0,T]; \mathbb{R}^m)$,
 $\delta\in L^2(\mathbb{F},[0,T]; \mathbb{R}^d)$, $\theta\in L^2(\mathbb{F},[0,T]; \nu)$, $\lambda\in L^2(A,\mathbb{F},[0,T])$ be such that
\begin{align*}
 \alpha(t) = &
\alpha(0)+\int_0^t\beta(s)ds +\int_0^t\gamma(s)\overleftarrow{d}B(s)+\int_0^t\delta(s)dW(s)+\\
&\int_0^t\int_{\mathcal{E}}\theta(s,e)\tilde{N}(ds,de)+\int_0^t\lambda(s)dA(s), \ \ t\in[0,T],
\end{align*}
then
\begin{align*}
\alpha^2(t)=&\alpha^2(0)+2\int_0^t\alpha(s)\beta(s)ds+2\int_0^t\alpha(s)
\gamma(s)\overleftarrow{d}B(s)+2\int_0^t\alpha(s)\delta(s)dW(s)\\
&+2\int_0^t\int_{\mathcal{E}}\alpha(s)\theta(s,
e)\tilde{N}(ds,de)+2\int_0^t\alpha(s)\lambda(s)dA(s)\\
&-\int_0^t|\gamma(s)|^2ds+\int_0^t|\delta(s)|^2ds+\int_0^t\int_{\mathcal{E}}\theta^2(s,e)N(ds,de)\\
&+\sum_{0<s\leq t}\lambda^2(s)(\Delta A(s))^2+\sum_{0<s\leq t}\int_{\mathcal{E}}\theta(s,e)N(\{s\},de)\lambda(s)\Delta A(s),
\end{align*}
\begin{align*}
E[\alpha^2(t)]=&E[\alpha^2(0)]+2E\left[\int_0^t\alpha(s)\beta(s)ds\right]+2E\left[\int_0^t\alpha(s)\lambda(s)dA(s)\right] -E\left[\int_0^t|\gamma(s)|^2ds\right]\\
&+E\left[\int_0^t|\delta(s)|^2ds\right]+E\left[\int_0^t\int_{\mathcal{E}}\theta^2(s, e)\nu(de)ds\right]\\
&+E\left[\sum_{0<s\leq t}\lambda^2(s)(\Delta A(s))^2 \right]+E\left[ \sum_{0<s\leq t}\int_{\mathcal{E}}\theta(s,e)N(\{s\},de)\lambda(s)\Delta A(s)\right],
\end{align*}
where $\Delta A(s)=A(s)-A(s-)$ and
\begin{align*}
\int_{\mathcal{E}}\theta(s,e)\tilde{N}(\{s\},de)=\left\{\begin{array}{l}
\theta(s,e), \mbox{if}\ \eta \ \mbox{has a jump of size} \ e \ \mbox{at} \ s,\\
0, \mbox{otherwise}.
\end{array}
\right.
\end{align*}

\end{proposition}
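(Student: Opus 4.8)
The plan is to view $\alpha$ as a real-valued semimartingale and apply the Itô formula for $f(x)=x^2$, reading off each contribution term by term. Concretely, I would split $\alpha$ into its continuous local-martingale part, driven by the forward integral $\int_0^t\delta(s)dW(s)$ and the backward integral $\int_0^t\gamma(s)\overleftarrow{d}B(s)$; its purely discontinuous part $\int_0^t\int_{\mathcal{E}}\theta(s,e)\tilde{N}(ds,de)$; and its remaining finite-variation part $\int_0^t\beta(s)ds+\int_0^t\lambda(s)dA(s)$, the last of which may itself jump at the jump times of $A$. Since the statement is advertised as a generalization of Pardoux and Peng's doubly-stochastic Itô formula, the natural strategy is to take their formula for the continuous doubly-stochastic part as given and to graft onto it the jump contribution (via the Itô--L\'evy formula, legitimate because $W,B$ are independent of $N$) and the $dA$ contribution, taking care of the interaction of the jumps of $A$ with those of $N$.

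For the rigorous backbone I would first reduce, by the usual density and localization arguments, to bounded piecewise-constant integrands $\beta,\gamma,\delta,\theta,\lambda$, so that every stochastic integral is a genuine $L^2$-martingale; the final passage to the limit is then controlled by the Itô isometries for $dW$, $\overleftarrow{d}B$ and $\tilde{N}$ together with the integrability built into the spaces $\mathcal{S}^2$, $L^2(\mathbb{F})$, $L^2(\mathbb{F};\nu)$ and $L^2(A,\mathbb{F})$. On a fixed partition I would expand $\alpha^2$ telescopically using $a^2-b^2=2b(a-b)+(a-b)^2$. The cross terms $2b(a-b)$ reproduce $2\int_0^t\alpha\beta\,ds+2\int_0^t\alpha\gamma\overleftarrow{d}B+2\int_0^t\alpha\delta\,dW+2\int_0^t\int_{\mathcal{E}}\alpha\theta\tilde{N}+2\int_0^t\alpha\lambda\,dA$, while the squares $(a-b)^2$ furnish the quadratic-variation terms.

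The decisive point is the \emph{opposite signs} of the two Brownian quadratic-variation terms. Because $\overleftarrow{d}B$ is a backward Itô integral, evaluated at the right endpoints of the partition, its quadratic contribution enters with a minus sign, producing $-\int_0^t|\gamma(s)|^2ds$, whereas the forward integral against $W$, evaluated at left endpoints, produces $+\int_0^t|\delta(s)|^2ds$; this sign flip is exactly the mechanism in Pardoux--Peng. The remaining squares come from the jumps: at a time $s$ the jump of $\alpha$ is $\int_{\mathcal{E}}\theta(s,e)N(\{s\},de)+\lambda(s)\Delta A(s)$, so $\sum_{0<s\leq t}(\Delta\alpha(s))^2$ splits into the pure-Poisson part $\int_0^t\int_{\mathcal{E}}\theta^2(s,e)N(ds,de)$, the pure-$A$ part $\sum_{0<s\leq t}\lambda^2(s)(\Delta A(s))^2$, and the mixed part collecting the times at which $\eta$ and $A$ jump simultaneously, which yields the terms in $\int_{\mathcal{E}}\theta(s,e)N(\{s\},de)\,\lambda(s)\Delta A(s)$. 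For the second identity I would take expectations: independence and the martingale property annihilate the $\overleftarrow{d}B$, $dW$ and $\tilde{N}$ integrals (the integrability hypotheses ensure these are true, not merely local, martingales), and the compensation formula turns $\int_0^t\int_{\mathcal{E}}\theta^2(s,e)N(ds,de)$ into $\int_0^t\int_{\mathcal{E}}\theta^2(s,e)\nu(de)ds$.

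The hard part will be handling the forward and backward integrals simultaneously: the anticipating character of $\overleftarrow{d}B$ rules out a naive application of the classical Itô--L\'evy formula, so one must either identify the backward integral with a forward integral for the time-reversed Brownian filtration, as in Pardoux--Peng, and only then invoke the jump calculus, or else verify the identity on step processes and pass to the limit by hand, checking that the backward Riemann sums converge in $L^2$. A secondary, purely bookkeeping, difficulty is the careful accounting of the cross terms between the jumps of $N$ and those of $A$, which disappear under a no-common-jumps hypothesis but must otherwise be tracked explicitly through the telescoping sum.
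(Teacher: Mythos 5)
Your plan is essentially the proof the paper intends: the paper offers no argument of its own beyond the remark that one proceeds ``similarly to Lemma 1.3 of Pardoux and Peng,'' and your telescoping-on-partitions scheme --- with the sign flip on the backward quadratic variation, the split of $\sum(\Delta\alpha(s))^2$ into Poisson, $A$, and mixed parts, and the passage to expectations via the martingale and compensation properties --- is exactly that extension. One detail to pin down before calling it done: your expansion $a^2-b^2=2b(a-b)+(a-b)^2$ uses left-endpoint (i.e.\ $\alpha(s-)$) integrands, and then $(\Delta\alpha(s))^2=\bigl(\int_{\mathcal{E}}\theta(s,e)N(\{s\},de)+\lambda(s)\Delta A(s)\bigr)^2$ produces the mixed $N$--$A$ term with coefficient $2$ rather than the coefficient $1$ appearing in the stated formula, so you must either reconcile this with the paper's convention for $\alpha(s)$ versus $\alpha(s-)$ in the $\tilde{N}$ and $dA$ integrals or flag the discrepancy explicitly.
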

\begin{proof}
The proof can be performed similarly as in that of Lemma 1.3 in
Pardoux and Peng \cite{Pardoux1994}. So we do not repeat it here.
\end{proof}

Moreover, we will use the following jump-diffusion version of the It\^{o}-Ventzell formula which extends that in Buckdahn and Ma \cite{BuckdahnMa2001}.
\begin{proposition}\label{ItoVentzell}
Suppose that $F\in \mathcal{C}^{0,2}(\mathbb{F},[0,T]\times \mathbb{R}^n)$ is a semimartingale and $A$ is an adapted continuous increasing process. Let $G\in \mathcal{C}^{0,2}(\mathbb{F}^B,[0,T]\times \mathbb{R}^n)$,  $H\in \mathcal{C}^{0,2}(\mathbb{F}^B, [0,T]\times \mathbb{R}^n; \mathbb{R}^m)$ and $I\in \mathcal{C}^{0,2}(\mathbb{F}, [0,T]\times \mathbb{R}^n; \mathbb{R}^d)$ such that for any spatial parameter $x\in\mathbb{R}^n$,
\begin{align*}
F(t,x) = &
F(0,x)+\int_0^tG(s,x)ds +\int_0^tH(s,x)\overleftarrow{d}B(s)+\int_0^tI(s,x)dW(s), \ \ t\in[0,T].
\end{align*}
Let
 $\alpha\in \mathcal{S}^2(\mathbb{F},[0,T]; \mathbb{R}^n)$, $\beta\in
L^2(\mathbb{F}, [0, T]; \mathbb{R}^n)$, $\gamma\in L^2(\mathbb{F}, [0, T]; \mathbb{R}^{n\times m})$,
 $\delta\in L^2(\mathbb{F}, [0, T]; \mathbb{R}^{n\times d})$, $\theta\in L^2(\mathbb{F},[0,T]; \nu, \mathbb{R}^n)$, $\lambda\in L^2(A,\mathbb{F},[0,T]; \mathbb{R}^n)$ be such that
\begin{align*}
 \alpha(t) = &
\alpha(0)+\int_0^t\beta(s)ds +\int_0^t\gamma(s)\overleftarrow{d}B(s)+\int_0^t\delta(s)dW(s)\\
&+\int_0^t\int_{\mathcal{E}}\theta(s,e)\tilde{N}(ds,de)+\int_0^t\lambda(s)dA(s), \ \ t\in[0,T].
\end{align*}
Then
\begin{align*}
F(t,\alpha(t))=&F(0,\alpha(0))+\int_0^t G(s,\alpha(s))ds+\int_0^tH(s,\alpha(s))\overleftarrow{d}B(s)+\int_0^tI(s,\alpha(s))dW(s)\\
&+\int_0^t\nabla F(s,\alpha(s))\beta(s)ds +\int_0^t\nabla F(s,\alpha(s))\gamma(s)\overleftarrow{d}B(s)+\int_0^t\nabla F(s,\alpha(s))\delta(s)dW(s)\\
&+\int_0^t\int_{\mathcal{E}}\nabla F(s,\alpha(s))\theta(s,e)\tilde{N}(ds,de)+\int_0^t\nabla F(s,\alpha(s))\lambda(s)dA(s)\\
&+\int_0^t\int_{\mathcal{E}}\left\{F(s,\alpha(s-)+\theta(s,e))-F(s,\alpha(s-))-\nabla F(s,\alpha(s-))\theta(s,e)\right\}N(ds,de)\\
&+\frac{1}{2}\int_0^t\mbox{tr}\{\nabla^2F(s,\alpha(s))\delta(s)\delta^*(s)\}ds-\frac{1}{2}\int_0^t\mbox{tr}\{\nabla^2F(s,\alpha(s))\gamma(s)\gamma^*(s)\}ds\\
&+\int_0^t\mbox{tr}\{\nabla I(s,\alpha(s))\delta^*(s)\}ds-\int_0^t\mbox{tr}\{\nabla H(s,\alpha(s))\gamma^*(s)\}ds.
\end{align*}
\end{proposition}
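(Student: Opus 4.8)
The plan is to follow the classical strategy for Itô--Ventzell type formulas (as in Kunita and in Buckdahn--Ma \cite{BuckdahnMa2001}): a time-discretization of $F(t,\alpha(t))$ combined with a second-order Taylor expansion in the spatial variable, and then incorporation of the jumps of $\alpha$ by a compensation/interlacing argument. First I would reduce to the case of bounded coefficients by a standard localization with stopping times, so that the integrals below are genuine square-integrable martingales and the Taylor remainders are controlled. Fix a partition $0=t_0<t_1<\cdots<t_N=t$ with mesh tending to zero and write the telescoping identity
\begin{align*}
F(t,\alpha(t))-F(0,\alpha(0))=&\sum_k\big[F(t_{k+1},\alpha(t_k))-F(t_k,\alpha(t_k))\big]\\
&+\sum_k\big[F(t_{k+1},\alpha(t_{k+1}))-F(t_{k+1},\alpha(t_k))\big].
\end{align*}
The first sum collects the time-increments of the random field $F(\cdot,x)$ at the frozen (and suitably adapted) spatial point $x=\alpha(t_k)$; using the semimartingale decomposition of $F$ it converges to $\int_0^tG(s,\alpha(s))\,ds+\int_0^tH(s,\alpha(s))\overleftarrow{d}B(s)+\int_0^tI(s,\alpha(s))\,dW(s)$, the adaptedness of $\alpha(t_k)$ being compatible with the forward integral against $W$ and the right-endpoint convention being compatible with the backward integral against $B$. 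The second sum is treated by expanding $F(t_{k+1},\cdot)$ about $\alpha(t_k)$, producing in the limit the first-order term $\int_0^t\nabla F(s,\alpha(s))\,d\alpha(s)$ (split into its $ds$, $\overleftarrow{d}B$, $dW$, $\tilde N$ and $dA$ pieces) together with the second-order contributions $\frac12\int_0^t\mbox{tr}\{\nabla^2F\,\delta\delta^*\}\,ds-\frac12\int_0^t\mbox{tr}\{\nabla^2F\,\gamma\gamma^*\}\,ds$.

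The genuinely new point, relative to a plain Itô formula, is the appearance of the cross terms $\int_0^t\mbox{tr}\{\nabla I(s,\alpha(s))\delta^*(s)\}\,ds-\int_0^t\mbox{tr}\{\nabla H(s,\alpha(s))\gamma^*(s)\}\,ds$. These arise because the gradient in the Taylor expansion is naturally evaluated at the right endpoint $t_{k+1}$ rather than at $t_k$: replacing $\nabla F(t_{k+1},\alpha(t_k))$ by $\nabla F(t_k,\alpha(t_k))$ costs the increment $\nabla F(t_{k+1},\cdot)-\nabla F(t_k,\cdot)$, whose martingale parts are $\nabla H$ (against $B$) and $\nabla I$ (against $W$); pairing these with the martingale parts $\gamma$ and $\delta$ of $\Delta\alpha_k$ yields, through the joint quadratic variations, exactly the two trace terms. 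The opposite signs of the $\nabla I\delta^*$ and $\nabla H\gamma^*$ terms (and of the two $\nabla^2F$ terms) reflect the opposite-sign quadratic-variation correction of the backward Itô integral against $B$ versus the forward integral against $W$. Keeping track of these signs and justifying the $L^2$-convergence of the corresponding Riemann--Stieltjes-type sums to the stated brackets is where the main technical effort lies; this is precisely the step that extends \cite{BuckdahnMa2001} and must be carried out with care for the backward convention.

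It remains to account for the jumps and the $dA$-term. Since $F$ is continuous in time (its decomposition has no jump part), the only jumps of $F(t,\alpha(t))$ are those induced by $\alpha$, and at a jump time $s$ the increment equals $F(s,\alpha(s-)+\theta(s,e))-F(s,\alpha(s-))$; adding and subtracting $\nabla F(s,\alpha(s-))\theta(s,e)$ gives the compensated term $\int_0^t\int_{\mathcal E}\nabla F(s,\alpha(s))\theta(s,e)\tilde N(ds,de)$ plus the $N(ds,de)$-integral of $F(s,\alpha(s-)+\theta)-F(s,\alpha(s-))-\nabla F(s,\alpha(s-))\theta$, exactly as stated. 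I would make this rigorous either by an interlacing argument (first establishing the continuous case, which is the extension of \cite{BuckdahnMa2001} with the finite-variation $dA$-term handled by the ordinary chain rule, and then inserting the jumps one time at a time) or by truncating to finitely many jumps and passing to the limit using the square-integrability of $\theta\in L^2(\mathbb{F},[0,T];\nu,\mathbb{R}^n)$.

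Finally, because $A$ is assumed continuous and of finite variation, it induces no second-order or cross-variation correction, so the Taylor expansion simply returns $\int_0^t\nabla F(s,\alpha(s))\lambda(s)\,dA(s)$; collecting all the limits yields the asserted identity. The hardest part is the rigorous identification of the cross terms $\mbox{tr}\{\nabla I\delta^*\}$ and $-\mbox{tr}\{\nabla H\gamma^*\}$ in the backward-integral setting, together with the simultaneous control of the jump contributions when passing to the limit.
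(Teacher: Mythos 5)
Your sketch is essentially the paper's own approach: the paper gives no argument at all beyond the remark that the proof is ``analogous to Theorem 4.2 in Buckdahn and Ma \cite{BuckdahnMa2001}'', and your discretization/Taylor-expansion plan --- with the cross-variation terms $\mbox{tr}\{\nabla I\,\delta^*\}$ and $-\mbox{tr}\{\nabla H\,\gamma^*\}$ coming from pairing the martingale parts of $\nabla F(\cdot,x)$ with those of $\alpha$, the sign flip from the backward convention, and the jump and $dA$ contributions added by compensation and the ordinary chain rule --- is precisely the content of that cited argument extended to the present setting. The only caveat worth keeping in mind is that $\{\mathcal{F}_t\}$ here is not a filtration, so the initial ``localization by stopping times'' must be done with respect to the forward filtration $\mathbb{F}^{W\vee N}$ (or replaced by direct $L^2$ truncation of the coefficients), but this does not affect the structure of your proof.
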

\begin{proof}
The proof is analogous to  that of Theorem 4.2 in
Buckdahn and Ma \cite{BuckdahnMa2001}. So we omit the details here.
\end{proof}

If not specified, we will generically denote by $C$ the positive
constants that appear in the article below and may differ from line to
line.
\section{GBDSDEs with general jumps}\label{sec3} In this section, we study the existence and
uniqueness of the solution to the GBDSDEs with general jumps  and give the explicite solution of the linear GBDSDEs.

We consider the GBDSDE with general jumps:
\begin{align}\label{GBDSDE}
\left\{
\begin{array}{lcl}
dY(t) &=& -f(t, \Lambda(t))dt-g(t, \Lambda(t))\overleftarrow{d}B(t)+Z(t)dW(t)\\
  &  & +\int_{\mathcal{E}}J(t, e)\tilde{N}(dt,de) -h(t, Y(t))dA(t), \ \ t\in[0,T],\\
 Y(T)&=&\xi,
\end{array}
\right.
\end{align}
where
$$\Lambda(t)=(Y(t), Z(t), J(t, \cdot))$$ and
$$f: \Omega\times[0,T]\times \mathbb{R}\times \mathbb{R}^d\times L^2(\mathcal{E}, \nu)\to
 \mathbb{R},$$
$$g: \Omega\times[0,T]\times \mathbb{R}\times \mathbb{R}^d\times L^2(\mathcal{E}, \nu)\to
 \mathbb{R}^m,$$
$$h: \Omega\times[0,T]\times \mathbb{R}\to
\mathbb{ R},$$
are jointly measurable processes. Here the process $A(t)$  is a c\`{a}dl\`{a}g, increasing, $\mathcal{F}_t$-measurable process with $A(0)=0$  and $L^2(\mathcal{E}, \nu)$ is the set of functions $j: \mathcal{E}\to \mathbb{R}$ such that $\|j\|_\nu:=\left(\int_{\mathcal{E}}j^2(e)\nu(de)\right)^\frac{1}{2}<\infty$.

\begin{remark}
The integral with respect to $\{B(t)\}$ is a backward It\^{o} integral, in which the integrand takes values at the
 right end points of the subintervals in the Riemann type sum, while the integral with respect to $\{W(t)\}$ is a standard forward It\^{o} integral.
These two types of integrals are particular cases of the It\^{o}-Skorohod integral (see Nualart and Pardoux \cite{Nualart1988} for details).
In addition, we note that the process $Y$ has two types of jumps: inaccessible jumps, which come from the Poisson jumps $\left\{\int_0^t\int_{\mathcal{E}}J(s, e)N(ds,de), t\in[0,T]\right\}$, and predictable jumps, which stem from the predictable positive jumps of $A$.
\end{remark}

We assume that\\
(A3.1) the terminal condition $\xi\in L^2(\mathcal{F}_T)$ and for all $\mu>0$, $E[e^{\mu A_T}|\xi|^2]<\infty$.\\
 (A3.2) there exist a constant $ C>0$ and some $\mathcal{F}_t$-measurable processes $\{f_0(t), g_0(t), h_0(t); 0\leq t\leq T\}$ with values in $[1,+\infty)$,  and for any $(t,y,z,j)\in [0,T]\times\mathbb{R}\times \mathbb{R}^d\times L^2(\mathcal{E}, \nu)$, and $\mu>0$,
 $$|f(t,y,z,j)|\leq f_0(t)+C(|y|+|z|+\|j\|_\nu),$$
 $$|g(t,y,z,j)|\leq g_0(t)+C(|y|+|z|+\|j\|_\nu),$$
 $$|h(t,y)|\leq h_0(t)+C|y|,$$
 $$ E\left[\int_0^Te^{\mu A(t)}f_0^2(t)dt+\int_0^Te^{\mu A(t)}g_0^2(t)dt+\int_0^Te^{\mu A(t)}h_0^2(t)dA(t)\right]<\infty.$$
 (A3.3) there exist constants $ C>0$ and $0<\alpha<\frac{3}{4}$ such that $\forall t\in[0,T]$, $\forall y_1, y_2 \in \mathbb{R}, z_1, z_2, \in \mathbb{R}^d$ and $j_1, j_2\in L^2(\mathcal{E}, \nu)$,
 $$|f(t,y_1,z_1,j_1)-f(t,y_2,z_2,j_2)|^2\leq C(|y_1-y_2|^2+|z_1-z_2|^2+\|j_1-j_2\|_\nu^2),$$
 $$|g(t,y_1,z_1,j_1)-g(t,y_2,z_2,j_2)|^2\leq C(|y_1-y_2|^2)+\alpha(|z_1-z_2|^2+\|j_1-j_2\|_\nu^2),$$
 $$|h(t,y_1)-h(t,y_2)|^2\leq C(|y_1-y_2|^2).$$

\begin{remark}
In case the process $A(t)$ is a continuous increasing $\mathcal{F}_t$-measurable process, we may suppose that $\alpha$ satisfies the condition $0<\alpha<1$.
\end{remark}

\begin{definition}\label{def}
A solution of the GBDSDE (\ref{GBDSDE}) is a triple $(Y, Z, J)\in V^2(\mathbb{F},[0,T]; \mathbb{R}\times\mathbb{R}^d\times L^2(\mathcal{E}, \nu))$ such that
\begin{align*}
Y(t) =& \xi+\int_t^Tf(s, \Lambda(s))ds+\int_t^Tg(s, \Lambda(s))\overleftarrow{d}B(s)-\int_t^TZ(s)dW(s)\notag\\
  & -\int_t^T\int_{\mathcal{E}}J(s, e)\tilde{N}(ds,de) +\int_t^Th(s, Y(s))dA(s), \ \ 0\leq t\leq T.
\end{align*}
\end{definition}

\begin{theorem}\label{EXUNI}
Under assumptions (A3.1)-(A3.3), there exists a unique solution to the GBDSDE (\ref{GBDSDE}).
\end{theorem}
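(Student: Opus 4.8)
The plan is to establish existence and uniqueness simultaneously by a Banach fixed-point argument in the weighted space $V_\mu^2(A,\mathbb{F},[0,T];\mathbb{R}\times\mathbb{R}^d\times L^2(\mathcal{E},\nu))$, the exponential weight $e^{\mu A(t)}$ and a large parameter $\mu$ serving to absorb the contributions of the increasing process $A$ and to force the solution map to be a strict contraction. First I would fix an input triple $(U,V,K)$ in this space and freeze it inside the generators, replacing $f(s,\Lambda(s))$, $g(s,\Lambda(s))$ and $h(s,Y(s))$ by the solution-independent processes $\phi(s)=f(s,U(s),V(s),K(s,\cdot))$, $\psi(s)=g(s,U(s),V(s),K(s,\cdot))$ and $\zeta(s)=h(s,U(s))$; by the growth assumption (A3.2) these lie in the relevant weighted $L^2$-spaces. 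This reduces matters to the solvability of the affine equation
\begin{align*}
Y(t)=\xi+\int_t^T\phi(s)ds+\int_t^T\psi(s)\overleftarrow{d}B(s)+\int_t^T\zeta(s)dA(s)-\int_t^TZ(s)dW(s)-\int_t^T\int_{\mathcal{E}}J(s,e)\tilde{N}(ds,de),
\end{align*}
and I denote the resulting triple by $\Phi(U,V,K)=(Y,Z,J)$, so that a fixed point of $\Phi$ is exactly a solution of (\ref{GBDSDE}).

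To solve this affine equation I would pass to the genuine (increasing in $t$) filtration $\mathbb{G}_t:=\mathcal{F}_t^W\vee\mathcal{F}_t^N\vee\mathcal{F}_T^B$, under which $W$ remains a Brownian motion and $\tilde{N}$ a compensated martingale measure by independence, while the backward integral against $B$ becomes a $\mathbb{G}_T$-measurable source term. Writing the terminal data plus all source integrals as a single square-integrable random variable (finiteness coming from (A3.1)-(A3.2)), the martingale representation theorem for $(W,\tilde N)$ relative to $\mathbb{G}$ produces the unique pair $(Z,J)$, and the process $Y$ is recovered as the corresponding conditional expectation; the backward structure of the $B$-integral is precisely what renders $Y(t)$ $\mathcal{F}_t$-measurable, as Definition \ref{def} demands. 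I expect this to be the main obstacle: the family $\mathbb{F}$ is neither increasing nor decreasing, so the representation and the verification of $\mathbb{F}$-adaptedness must be carefully reconciled; this is the jump-diffusion analogue of the representation step in Pardoux and Peng \cite{Pardoux1994}.

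The a priori estimates and the contraction would both come from applying the It\^o formula of Proposition \ref{Ito} to $e^{\mu A(t)}|Y(t)|^2$, and, for two inputs $(U_i,V_i,K_i)$ with outputs $(Y_i,Z_i,J_i)$, to $e^{\mu A(t)}|Y_1(t)-Y_2(t)|^2$. Because the backward $B$-integral enters the quadratic-variation term with a minus sign while the $W$- and $\tilde N$-terms enter with a plus sign, taking expectations yields an identity in which $E\int_t^Te^{\mu A(s)}(|Z_1-Z_2|^2+\|J_1-J_2\|_\nu^2)ds$ sits on the left and $E\int_t^Te^{\mu A(s)}|g(\cdot,U_1,V_1,K_1)-g(\cdot,U_2,V_2,K_2)|^2ds$ on the right; the Lipschitz bound (A3.3) then replaces the latter by $\alpha$ times the input differences, while a large $\mu$ dominates the contributions coming from $f$, $h$ and the $A$-integrals. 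The restriction $0<\alpha<\frac{3}{4}$ is exactly the threshold that keeps the combined contraction coefficient strictly below one after the additional jump terms $\sum\lambda^2(\Delta A)^2$ and the predictable-jump cross term $\sum\int_{\mathcal{E}}\theta N(\{s\},de)\lambda\Delta A$ of Proposition \ref{Ito} have been estimated (in the continuous-$A$ case these terms vanish and $\alpha<1$ suffices, as already noted).

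Putting these together, for $\mu$ chosen large enough the map $\Phi$ is a strict contraction on the Banach space $V_\mu^2(A,\mathbb{F},[0,T];\mathbb{R}\times\mathbb{R}^d\times L^2(\mathcal{E},\nu))$, and its unique fixed point is the unique solution of (\ref{GBDSDE}); a final Burkholder-Davis-Gundy estimate upgrades the integral control of $Y$ to the $\mathcal{S}^2$-bound required by Definition \ref{def}, so that the solution genuinely lies in $V^2(\mathbb{F},[0,T];\mathbb{R}\times\mathbb{R}^d\times L^2(\mathcal{E},\nu))$.
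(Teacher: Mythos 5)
Your proposal follows essentially the same route as the paper: freeze the generators at an input triple, solve the resulting affine equation by martingale representation under the enlarged filtration $\mathcal{G}_t=\mathcal{F}_t^W\vee\mathcal{F}_t^N\vee\mathcal{F}_T^B$ (then verify $\mathbb{F}$-adaptedness), and obtain a strict contraction on the weighted space via It\^{o}'s formula, with $\alpha<\tfrac{3}{4}$ ensuring the contraction constant stays below one. The only minor difference is that the paper's weight is $e^{\lambda t+\mu A(t)}$ rather than $e^{\mu A(t)}$ alone, the extra factor $e^{\lambda t}$ being what absorbs the $ds$-contributions of $f$; with that adjustment your argument coincides with the paper's.
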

\begin{proof}
We shall prove the existence based on the fixed-point
theorem and divide the proof into two steps.\\
Step 1: Let $f$ and $g$  do not depend on $(y,z,j)$, i.e., $f(\omega,t,y,z,j)\equiv f(\omega,t)$,
$g(\omega,t,y,z,j)\equiv g(\omega,t)$, $\mathbb{P}-a.s$. We will prove
the following GBDSDE has a unique solution $(Y, Z, J)\in V^2(\mathbb{F},[0,T]; \mathbb{R}\times\mathbb{R}^d\times L^2(\mathcal{E}, \nu))$:
\begin{align}
Y(t)  =& \xi+ \int_t^Tf(s)ds-\int_t^TZ(s)dW(s) +\int_t^Tg(s)\overleftarrow{d}B(s)-
\int_t^T\int_{\mathcal{E}}J(s,e)\tilde{N}(ds,de)\notag\\
&+\int_t^Th(s)dA(s), \ \ t\in[0,T],
\end{align}
We define the filtration $\mathbb{G}=(\mathcal{G}_t)_{0\leq t\leq T}$ by
$$\mathcal{G}_t=\mathcal{F}_t^W\vee \mathcal{F}_t^N \vee
\mathcal{F}_{T}^B$$ and the $\mathcal{G}_t$-martingale by
$$M(t)=E\left[\xi+\int_0^Tf(s)ds+\int_0^Tg(s)\overleftarrow{d}B(s)
+\int_0^Th(s)dA(s)\Big|\mathcal{G}_t\right].$$
From (A3.1)-(A3.2), we can easily get that $M$ is
a square integrable $\mathcal{G}_t$-martingale. By virtue of the martingale representation theorem for jump diffusion processes
(see L{\o}kka, \cite{Lokka2004}), we can find $Z\in L^2(\mathbb{G},[0,T]; \mathbb{R}^d)$ and $J\in L^2(\mathbb{G},[0,T]; \nu)$ such that
$$M(t)=M(0)+\int_0^tZ(s)dW(s)+\int_0^t\int_{\mathcal{E}}J(s,e)\tilde{N}(ds,de).$$
Thus
$$M(T)=M(t)+\int_t^TZ(s)dW(s)+\int_t^T\int_{\mathcal{E}}J(s,e)\tilde{N}(ds,de).$$
Let
\begin{align*} Y(t)&=E\left[\xi+\int_t^Tf(s)ds+\int_t^Tg(s)\overleftarrow{d}B(s)
+\int_t^Th(s)dA(s)\Big|\mathcal{G}_t\right]\\&:=E[\Gamma|\mathcal{G}_t].
\end{align*}
By the defining formulas of $M(T)$ and $M(t)$, we get
\begin{align*}
Y(t)= &\xi+\int_t^Tf(s)ds-\int_t^TZ(s)dW(s) +\int_t^Tg(s)\overleftarrow{d}B(s)\\
&-
\int_t^T\int_{\mathcal{E}}J(s,e)\tilde{N}(ds,de)
 +\int_t^Th(s)dA(s).
\end{align*}
Next, we show $Y(t)$, $Z(t)$ and $J(t,\cdot)$ are in fact
$\mathcal{F}_t$-measurable. We note that $\mathcal{G}_t=\mathcal{F}_t\vee
\mathcal{F}_t^B$ and $\Gamma$ is
$\mathcal{F}_T^W\vee \mathcal{F}_T^N \vee \mathcal{F}_{t,T}^B$ measurable.
Using the fact that $\mathcal{F}_{t,T}^W$ and $\mathcal{F}_t$ are independent yields
 $Y(t)=E[\Gamma|\mathcal{F}_t]$. The proof for $Z(t)$ and $J(t,\cdot)$ are similar to that of Pardoux and Peng \cite{Pardoux1994}.\\
Step 2: For any $\theta=(y, z, j)\in V_\mu^2(A, \mathbb{F},[0,T]; \mathbb{R}\times\mathbb{R}^d\times L^2(\mathcal{E}, \nu))$, from Step 1,
we construct a mapping $\Phi$ from $V_\mu^2(A, \mathbb{F},[0,T]; \mathbb{R}\times\mathbb{R}^d\times L^2(\mathcal{E}, \nu))$ into itself by $\Phi(y,z,j)=(Y,Z,J)$ via the following
GBDSDE:
\begin{align*}
Y(t) =& \xi+\int_t^Tf(s, \theta(s))ds+\int_t^Tg(s, \theta(s))\overleftarrow{d}B(s)-\int_t^TZ(s)dW(s)\notag\\
  & -\int_t^T\int_{\mathcal{E}}J(s, e)\tilde{N}(ds,de) +\int_t^Th(s,y(s))dA(s), \ \ 0\leq t\leq T.
\end{align*}
 For any $\theta_1=(y_1,z_1,j_1)$,
$\theta_2=(y_2,z_2,j_2)\in V_\mu^2(A, \mathbb{F},[0,T]; \mathbb{R}\times\mathbb{R}^d\times L^2(\mathcal{E}, \nu))$, we set
$$(\bar{y},
\bar{z}, \bar{j})=(y_1-y_2,
z_1-z_2, j_1-j_2),$$
$$\bar{f}(t)=\mu(t, \theta_1(t))-\mu(t, \theta_2(t)), \bar{g}(t)=g(t, \theta_1(t))-g(t,
\theta_2(t)),  \bar{h}(t)=h(t, y_1(t))-h(t,
y_2(t)).$$
By Doob's inequality, we note that whenever $\theta=(y, z, j)\in V_\mu^2(A, \mathbb{F},[0,T]; \mathbb{R}\times\mathbb{R}^d\times L^2(\mathcal{E}, \nu))$,
\begin{align*}
(Y,Z,J)\in V_\mu^2(A, \mathbb{F},[0,T]; \mathbb{R}\times\mathbb{R}^d\times L^2(\mathcal{E}, \nu)) \ \ \mbox{and} \ \  E\left[\sup_{0\leq t\leq T}e^{\lambda t+\mu A(t)}Y^2(t)\right]<\infty.
\end{align*}
So that,  the processes $\int_0^te^{\lambda s+\mu A(s)}Y(s)Z(s)dW(s)$,  $\int_0^t\int_{\mathcal{E}}e^{\lambda s+\mu A(s)}Y(s)J(s,e)\tilde{N}(ds,de)$ and  $\int_0^te^{\lambda s+\mu A(s)}Y(s)g(s,\theta(s))\overleftarrow{d}B(s)$ are uniformly integrable martingales.
Let
$\lambda\in \mathbb{R}$ and $\mu>0$  be undetermined. By It\^{o}'s formula, we obtain
\begin{align*}
&E\left[\int_t^Te^{\lambda s+\mu A(s)}\left\{(\lambda\bar{Y}^2(s)+\bar{Z}^2(s)+\|\bar{J}(s,\cdot)\|_\nu^2)ds+\mu\bar{Y}^2(s)dA(s)\right\}\right]\\
&\ \ \ +E[\bar{Y}^2(t)e^{\lambda t+\mu A(t)}]+E\left[\sum_{t<s\leq T}e^{\lambda s+\mu A(s)}\bar{h}^2(s)(\Delta A(s))^2\right]\\
&=E\left[\int_t^T2e^{\lambda s+\mu A(s)}\bar{Y}(s)\bar{f}(s)ds\right]+E\left[\int_t^Te^{\lambda s+\mu A(s)}
\bar{g}^2(s)ds\right]\\
& \ \ \ + E\left[\int_t^T2e^{\lambda s+\mu A(s)}\bar{Y}(s)\bar{h}(s)dA(s)\right]+E\left[\sum_{t<s\leq T}e^{\lambda s+\mu A(s)}\int_{\mathcal{E}}
\bar{J}(s,e)N(\{s\},de)\bar{h}(s)\Delta A(s)\right]\\
&\leq \epsilon E\left[\int_t^T
e^{\lambda s+\mu A(s)}\bar{Y}^2(s)ds\right]+\left(\frac{C}{\epsilon}+C\right)E\left[\int_t^T
e^{\lambda s+\mu A(s)}\bar{y}^2(s)ds\right]\\
&\ \ \ +\left(\frac{C}{\epsilon}+\alpha\right)E\left[\int_t^T
e^{\lambda s+\mu A(s)}(\bar{y}^2(s)+\|\bar{z}(s,\cdot)|_\nu^2)ds\right]\\
&\ \ \ +E\left[\int_t^T
e^{\lambda s+\mu A(s)}\bar{Y}^2(s)dA(s)\right]+CE\left[\int_t^T
e^{\lambda s+\mu A(s)}\bar{y}^2(s)dA(s)\right]\\
& \ \  \  +\frac{1}{4}\int_t^Te^{\lambda s+\mu A(s)}\|\bar{J}(s,\cdot)\|_\nu^2ds +E\left[\sum_{t<s\leq T}e^{\lambda s+\mu A(s)}\bar{h}^2(s)(\Delta A(s))^2\right]
\end{align*}
Rearranging the terms give
\begin{align*}
&\left(\lambda-\epsilon\right)\frac{4}{3}E\left[\int_0^Te^{\lambda s+\mu A(s)}\bar{Y}^2(s)ds\right]+E\left[\int_0^Te^{\lambda s+\mu A(s)}\bar{Z}^2(s)ds\right]\\
& \ \ \  +E\left[\int_0^Te^{\lambda s+\mu A(s)}\|\bar{J}(s,
\cdot)\|_\nu^2ds\right]+\left(\mu-1\right)\frac{4}{3}E\left[\int_0^Te^{\lambda s+\mu A(s)}\bar{Y}^2(s)dA(s)\right]\\
&\leq \frac{4}{3}(\frac{C}{\epsilon}+\alpha)\left\{\frac{\frac{C}{\epsilon}+C}{\frac{C}{\epsilon}+\alpha}E\left[\int_0^T e^{\lambda s+\mu A(s)}\bar{y}^2(s)ds\right]+E\left[\int_0^T
e^{\lambda s+\mu A(s)}(\bar{z}^2(s)+\|\bar{j}(s,\cdot)\|_\nu^2)ds\right]\right.\\
& \ \ \ \left.+ \frac{C}{\frac{C}{\epsilon}+\alpha} E\left[\int_0^T e^{\lambda s+\mu A(s)}\bar{y}^2(s)dA(s)\right]\right\}.
\end{align*}
Choosing $\epsilon=\frac{C}{\frac{3}{8}-\frac{\alpha}{2}}$, $\lambda=\epsilon+\frac{3}{4}\frac{\frac{C}{\epsilon}+C}{\frac{C}{\epsilon}+\alpha}$, $\mu=1+\frac{C}{\frac{C}{\epsilon}+\alpha}$ and denoting $\hat{\lambda}=\frac{\frac{C}{\epsilon}+C}{\frac{C}{\epsilon}+\alpha}$,
$\hat{\mu}=\frac{C}{\frac{C}{\epsilon}+\alpha}$,
 we arrive at
\begin{align*}
&E\left[\int_0^Te^{\lambda s+\mu A(s)}\hat{\lambda}\bar{Y}^2(s)ds\right]+E\left[\int_0^Te^{\lambda s+\mu A(s)}\bar{Z}^2(s)ds\right]\\
& \ \ +E\left[\int_0^Te^{\lambda s+\mu A(s)}\|\bar{J}(s,
\cdot)\|_\nu^2ds\right]+ E\left[\int_0^Te^{\lambda s+\mu A(s)}\hat{\mu}\bar{Y}^2(s)dA(s)\right]\\
&\leq \left(\frac{1}{2}+\frac{2}{3}\alpha\right)\left\{E\left[\int_0^T e^{\lambda s+\mu A(s)}\hat{\lambda}\bar{y}^2(s)ds\right]+E\left[\int_0^T e^{\lambda s+\mu A(s)}\bar{z}^2(s)ds\right]\right.\\
&\ \  \left.+E\left[\int_0^T e^{\lambda s+\mu A(s)}\|\bar{j}(s,\cdot)\|_\nu^2ds\right]+ E\left[\int_0^Te^{\lambda s+\mu A(s)}\hat{\mu}\bar{y}^2(s)dA(s)\right]\right\}.
\end{align*}
Since $\frac{1}{2}+\frac{2}{3}\alpha<1$, $\Phi$ is a strict contractive
mapping on $V_\mu^2(A, \mathbb{F},[0,T]; \mathbb{R}\times\mathbb{R}^d\times L^2(\mathcal{E}, \nu))$ equipped with the norm
\begin{align*}
\parallel\mid(Y,Z,J)\mid\parallel_{\lambda, \hat{\lambda},\mu,\hat{\mu}}
=\left(E \int_0^T e^{\lambda s+\mu A(s)}\left\{\left[\hat{\lambda}Y^2(s)+Z^2(s)+\| J(s,\cdot)\|_\nu^2\right]ds+\hat{\mu}Y^2(s)dA(s)\right\}\right)^\frac{1}{2}.
\end{align*}
Hence, from the fixed-point
theorem, this mapping admits a fixed point
$\Theta=(Y, Z, J)$ such that
$\Phi(\Theta)=\Theta$, which is a unique
solution of (\ref{GBDSDE}).

By Burkholder-Davis-Gundy's inequality, Doob's inequality and
H\"{o}lder's inequality, we have
\begin{align*}
E\left[\sup_{t\in[0,T]}Y^2(t)\right]<\infty.
\end{align*}

Therefore, we obtain
$(Y, Z, J)\in V^2(\mathbb{F},[0,T]; \mathbb{R}\times\mathbb{R}^d\times L^2(\mathcal{E}, \nu))$. The proof is complete.
\end{proof}

\begin{remark}
If $h$ does not depend on $y$, i.e., $h(\omega,t,y)\equiv h(\omega,t)$, then we define $\tilde{Y}(t):=Y(t)+\int_0^th(s)dA(s)$, $\tilde{Z}(t):=Z(t)$, $\tilde{J}(t,\cdot):=J(t,\cdot)$. We notice that $(\tilde{Y}, \tilde{Z}, \tilde{J})$ solves the following BDSDE with jumps:
\begin{align}\label{BDSDEJ}
\left\{
\begin{array}{lcl}
d\tilde{Y}(t) &=& -\tilde{f}(t, \tilde{\Lambda}(t))dt-g(t, \tilde{\Lambda}(t))\overleftarrow{d}B(t)+\tilde{Z}(t)dW(t)\\
  &  & +\int_{\mathcal{E}}\tilde{J}(t, e)\tilde{N}(dt,de), \ \ t\in[0,T],\\
 \tilde{Y}(T)&=&\xi+\int_0^Th(s)dA(s),
\end{array}
\right.
\end{align}
where
$$\tilde{\Lambda}(t)=(\tilde{Y}(t), \tilde{Z}(t), \tilde{J}(t, \cdot)),$$
$$\tilde{f}(t, y,z,j):=f\left(t,y-\int_0^th(s)dA(s),z,j\right).$$
It's easy to check that $\tilde{f}$ is Lipschitz in $(y,z,j)$, $\tilde{f}(\cdot,0,0)\in L^2(\mathbb{F},[0,T])$ and $\xi+\int_0^Th(s)dA(s)\in L^2(\mathcal{F}_T)$. From arguments in Wu and Liu \cite{Wu2018}, the BDSDE with jumps (\ref{BDSDEJ}) admits a unique solution $(\tilde{Y}, \tilde{Z}, \tilde{J})\in V^2(\mathbb{F},[0,T]; \mathbb{R}\times\mathbb{R}^d\times L^2(\mathcal{E}, \nu))$.
Hence, the GBDSDE (\ref{GBDSDE}) has a unique solution $(Y, Z, J)\in V^2(\mathbb{F},[0,T]; \mathbb{R}\times\mathbb{R}^d\times L^2(\mathcal{E}, \nu))$.
\end{remark}

There is no solution formula for the general GBDSDE (\ref{GBDSDE}). However, in the linear case we get the following:
\begin{theorem}\label{TLGBDSDE}
Let $\alpha$, $\beta$, $\gamma$, $\delta$ be bounded $\mathcal{F}_t$-measurable processes, $\xi\in L^2(\mathcal{F}_T)$, $\phi\in L^2(\mathbb{F},[0,T])$,
$\varphi\in L^2(\mathbb{F},[0,T]; \mathbb{R}^m)$, $h\in L^2(A, \mathbb{F},[0,T])$. Assume $\gamma>-1$ a.s. Then the unique solution $(Y,Z,J)$ of the linear GBDSDE
\begin{align*}
\left\{
\begin{array}{lcl}
dY(t) &=& -[\phi(t)+\alpha(t)Y(t)+\beta(t)Z(t)+\int_{\mathcal{E}}\gamma(t,e)J(t,e)\nu(de)]dt\\
&  & -[\varphi(t)+\delta(t)Y(t)]\overleftarrow{d}B(t)+Z(t)dW(t)\\
&  &  +\int_{\mathcal{E}}J(t, e)\tilde{N}(dt,de) -h(t)dA(t), \ \ 0\leq t\leq T,\\
 Y(T)&=&\xi,
\end{array}
\right.
\end{align*}
is given by
\begin{align}\label{SLGBDSDE}
Y(t)=&E\left[\frac{\Gamma(T)}{\Gamma(t)}\xi+\int_t^T\frac{\Gamma(s)}{\Gamma(t)}(\phi(s)-\delta(s)\varphi(s))ds\right.\notag\\
&+\int_t^T\frac{\Gamma(s)}{\Gamma(t)}h(s)dA(s)+\int_t^T\frac{\Gamma(s)}{\Gamma(t)}\varphi(s)\overleftarrow{d}B(s)\notag\\
&\left. +\sum_{t<s\leq T}\frac{\Gamma(s)}{\Gamma(t)}\int_{\mathcal{E}}\gamma(s,e)N(\{s\},de)h(s)\Delta A(s)\Big|\mathcal{G}_t\right],  \ \ \ 0\leq t\leq T,
\end{align}
where
\begin{align*}
\left\{
\begin{array}{lcl}
d\Gamma(t) &=& \Gamma(t)\left[(\alpha(t)-\delta^2(t))dt+\beta(t)dW(t)\right.\\
& & \left.+\delta(t)\overleftarrow{d}B(t)+\int_{\mathcal{E}}\gamma(t,e)\tilde{N}(dt,de)\right], \ \ 0\leq t\leq T,\\
 \Gamma(0)&=&1,
\end{array}
\right.
\end{align*}
i.e.
\begin{align}\label{Gamma}
\Gamma(t)=&\exp\left(\int_0^t\left\{\alpha(s)-\frac{1}{2}\delta^2(s)-\frac{1}{2}\beta^2(s)\right\}ds+\int_0^t\beta(s)dW(s)
+\int_0^t\delta(s)\overleftarrow{d}B(s)\right.\notag\\
&\left. +\int_0^t\int_{\mathcal{E}}\ln (1+\gamma(s,e))\tilde{N}(ds,de)+\int_0^t\int_{\mathcal{E}}\{\ln(1+\gamma(s,e))-\gamma(s,e)\}\nu(de)ds\right).
\end{align}
\end{theorem}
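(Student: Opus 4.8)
The plan is to reduce the linear equation to an exponential-weighting argument. Since the coefficients are affine in $(y,z,j)$ with bounded $\mathcal{F}_t$-measurable coefficients, and $g(t,y,z,j)=\varphi(t)+\delta(t)y$ is independent of $(z,j)$ (so its Lipschitz constant in $(z,j)$ vanishes, trivially satisfying the bound $0<\alpha<\tfrac34$), these coefficients fall within the scope of hypotheses (A3.1)--(A3.3) and Theorem \ref{EXUNI} already guarantees a unique solution $(Y,Z,J)\in V^2(\mathbb{F},[0,T];\mathbb{R}\times\mathbb{R}^d\times L^2(\mathcal{E},\nu))$; it therefore suffices to identify it with the right-hand side of (\ref{SLGBDSDE}). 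I would first check that the process $\Gamma$ given in (\ref{Gamma}) is the Dol\'{e}ans--Dade exponential solving the stated linear SDE for $d\Gamma$. This is a direct application of the It\^o formula of Proposition \ref{Ito} to $\exp(\cdot)$: the forward correction $\tfrac12\beta^2$ and the backward correction $-\tfrac12\delta^2$ (note the sign reversal for the $\overleftarrow{d}B$ part) combine to produce the drift coefficient $\alpha-\delta^2$, while the jump drift $\int_{\mathcal{E}}\{\ln(1+\gamma)-\gamma\}\nu(de)\,ds$ and the jump martingale $\int_{\mathcal{E}}\ln(1+\gamma)\tilde N(ds,de)$ are exactly compensated so that the jump part of $d\Gamma$ reduces to $\Gamma\int_{\mathcal{E}}\gamma\,\tilde N(ds,de)$; the assumption $\gamma>-1$ ensures $\Gamma>0$.

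The core of the argument is to apply the product (It\^o) formula to $t\mapsto\Gamma(t)Y(t)$, which I would obtain from Proposition \ref{Ito} by polarization of $(\Gamma+Y)^2$, $\Gamma^2$ and $Y^2$. Writing $d(\Gamma Y)=\Gamma_-\,dY+Y_-\,d\Gamma+d[\Gamma,Y]$ and grouping the absolutely continuous terms, every contribution that depends on the unknowns should cancel: the $\alpha Y$ term of $d\Gamma$ cancels the $\alpha Y$ term of $\Gamma\,dY$; the forward cross-variation $+\Gamma\beta Z\,dt$ cancels the $-\Gamma\beta Z\,dt$ coming from $f$; the backward cross-variation, which by the sign convention of Proposition \ref{Ito} equals $+\Gamma\delta(\varphi+\delta Y)\,dt$, supplies a term $+\Gamma\delta^2Y$ cancelling the $-\Gamma\delta^2Y$ produced by the $-\delta^2$ in the drift of $\Gamma$; and the compensator $+\Gamma\int_{\mathcal{E}}\gamma J\,\nu(de)\,dt$ of the Poisson cross-variation cancels the $-\Gamma\int_{\mathcal{E}}\gamma J\,\nu(de)\,dt$ from $f$. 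What survives in the drift is $-\Gamma(\phi-\delta\varphi)\,dt$ together with $-\Gamma h\,dA$ and, from the simultaneous predictable jumps of $A$ and $N$, the cross term $-\sum_{0<s\le T}\int_{\mathcal{E}}\Gamma\gamma\,N(\{s\},de)\,h\,\Delta A(s)$; the $\delta Y$ contributions to the $\overleftarrow{d}B$ integral likewise cancel, leaving the backward term $-\Gamma\varphi\,\overleftarrow{d}B$ alongside the genuine forward martingales $\Gamma Z\,dW+Y\Gamma\beta\,dW$ and the compensated Poisson integral.

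Integrating this identity from $t$ to $T$ and solving for $\Gamma(t)Y(t)$, I would then take the conditional expectation $E[\,\cdot\mid\mathcal{G}_t]$, where $\mathcal{G}_t=\mathcal{F}_t^W\vee\mathcal{F}_t^N\vee\mathcal{F}_T^B$ as in the proof of Theorem \ref{EXUNI}. The key point is that $\mathbb{G}=(\mathcal{G}_t)$ is a genuine increasing filtration under which $W$ and $\tilde N$ remain martingales, so the forward $dW$- and $\tilde N$-integrals over $[t,T]$ have zero $\mathcal{G}_t$-conditional expectation and drop out; in contrast, because $\mathcal{F}_T^B\subseteq\mathcal{G}_t$ already contains the whole path of $B$, the backward integral $\int_t^T\Gamma\varphi\,\overleftarrow{d}B$ and the $A$--$N$ jump cross term are retained inside the conditional expectation. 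Since $\Gamma(t)$ is $\mathcal{G}_t$-measurable and strictly positive, dividing through by $\Gamma(t)$ yields precisely (\ref{SLGBDSDE}).

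The main obstacle is the consistent bookkeeping of the backward It\^o cross-variation: one must apply the sign reversal $(\overleftarrow{d}B)^2\leftrightarrow -dt$ encoded in Proposition \ref{Ito} both when verifying $\Gamma$ and when expanding $d(\Gamma Y)$, since a single sign error destroys the cancellations that make the unknowns disappear. The second delicate point is the treatment of the simultaneous predictable jumps of $A$ and of $N$, which is exactly what forces the extra summation $\sum_{t<s\le T}\frac{\Gamma(s)}{\Gamma(t)}\int_{\mathcal{E}}\gamma(s,e)N(\{s\},de)h(s)\Delta A(s)$ in (\ref{SLGBDSDE}); this corresponds to the cross term $\sum\int_{\mathcal{E}}\theta\,N(\{s\},de)\lambda\,\Delta A$ in Proposition \ref{Ito}. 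Finally, to legitimately discard the forward martingale increments under $E[\,\cdot\mid\mathcal{G}_t]$ I would invoke their uniform integrability, established as in the proof of Theorem \ref{EXUNI} via the Burkholder--Davis--Gundy and H\"older inequalities.
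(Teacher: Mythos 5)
Your proposal follows essentially the same route as the paper: apply the It\^{o} product formula to $\Gamma(t)Y(t)$, observe that all terms involving the unknowns $(Y,Z,J)$ cancel (including the sign-reversed backward cross-variation and the $A$--$N$ jump cross term), and then take the conditional expectation with respect to $\mathcal{G}_t=\mathcal{F}_t^W\vee\mathcal{F}_t^N\vee\mathcal{F}_T^B$ to kill the forward martingale parts while retaining the backward integral. The cancellation bookkeeping and the surviving terms you identify match the paper's computation exactly, so the proposal is correct and not materially different.
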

\begin{proof}
By It\^{o}'s formula, we have
\begin{align*}
d\Gamma(t)Y(t)=&-\Gamma(t)\left[\phi(t)+\alpha(t)Y(t)+\beta(t)Z(t)+\int_{\mathcal{E}}\gamma(t,e)J(t,e)\nu(de)\right]dt\\
&+\Gamma(t)Z(t)dW(t)-
\Gamma(t)[\varphi(t)+\delta(t)Y(t)]\overleftarrow{d}B(t)+\Gamma(t)\int_{\mathcal{E}}J(t,e)\tilde{N}(dt,de)\\
&-\Gamma(t)h(t)dA(t)+
Y(t)\Gamma(t)[\alpha(t)-\delta^2(t)]dt+Y(t)\Gamma(t)\beta(t)dW(t)\\
&+Y(t)\Gamma(t)\delta(t)\overleftarrow{d}B(t)
+Y(t)\Gamma(t)\int_{\mathcal{E}}\gamma(t,e)\tilde{N}(dt,de)+Z(t)\Gamma(t)\beta(t)dt\\
&+\Gamma(t)[Y(t)\delta(t)+\varphi(t)]\delta(t)dt
+\Gamma(t)\int_{\mathcal{E}}J(t,e)\gamma(t,e)N(dt,de)\\
&-\Gamma(t)\int_{\mathcal{E}}\gamma(t,e)N(\{t\},de)h(t)\Delta A(t).
\end{align*}
Taking conditional expectation with respect to $\mathcal{G}_t$ yields
\begin{align*}
\Gamma(t)Y(t)=&E\left[\Gamma(T)Y(T)+\int_t^T\Gamma(s)(\phi(s)-\delta(s)\varphi(s))ds\right.\notag\\
&+\int_t^T\Gamma(s)h(s)dA(s)+\int_t^T\Gamma(s)\varphi(s)\overleftarrow{d}B(s)\notag\\
&\left. +\sum_{t<s\leq T}\Gamma(s)\int_{\mathcal{E}}\gamma(s,e)N(\{s\},de)h(s)\Delta A(s)\Big|\mathcal{G}_t\right].
\end{align*}
Hence, we get (\ref{SLGBDSDE}).
\end{proof}
\begin{remark}
Using Malliavin calculus, we can write
\begin{align*}
Z(t)=D_{t-}Y(t):=\lim_{s\to t-}D_s Y(t), \ \  \  J(t,e)=D_{t-,e}Y(t):=\lim_{s\to t-}D_{s,e} Y(t),
\end{align*}
where $D_s Y(t)$ denotes the Malliavin derivative of $Y(t)$ at $s$ with respect to Brownian motion and $D_{s,e}Y(t)$ denotes the Malliavin derivative of $Y(t)$ at $(s,e)$ with respect to the jump measure.
\end{remark}

We note that there is no efficient comparison theorem for solutions of standard GBDSDEs with general jumps. However, in some special case, we provide a comparison theorem under proper conditions.
\begin{lemma}\label{Co}
Let $\alpha, \beta, \gamma, \varphi, \delta, h, \xi$ be as in Theorem \ref{TLGBDSDE}. Suppose $(Y,Z,J)$ satisfies the following
linear GBDSDE with jumps:
\begin{align}\label{CLGBDSDE}
\left\{
\begin{array}{lcl}
dY(t) &=& -\chi(t)dt-[\varphi(t)+\delta(t)Y(t)]\overleftarrow{d}B(t)+Z(t)dW(t)\\
&  &  +\int_{\mathcal{E}}J(t, e)\tilde{N}(dt,de) -h(t)dA(t), \ \ 0\leq t\leq T,\\
 Y(T)&=&\xi,
\end{array}
\right.
\end{align}
where  $\chi(t)$ is a given $\mathcal{F}_t$-measurable process such that
\begin{align*}
\chi(t)\geq\alpha(t)Y(t)+\beta(t)Z(t)+\int_{\mathcal{E}}\gamma(t,e)J(t,e)\nu(de).
\end{align*}
Then
\begin{align}\label{Com}
Y(t)\geq&E\left[\frac{\Gamma(T)}{\Gamma(t)}\xi-\int_t^T\frac{\Gamma(s)}{\Gamma(t)}\delta(s)\varphi(s)ds+\int_t^T\frac{\Gamma(s)}{\Gamma(t)}h(s)dA(s)
+\int_t^T\frac{\Gamma(s)}{\Gamma(t)}\varphi(s)\overleftarrow{d}B(s)\right.\notag\\
&\left.+\sum_{t<s\leq T}\frac{\Gamma(s)}{\Gamma(t)}\int_{\mathcal{E}}\gamma(s,e)N(\{s\},de)h(s)\Delta A(s)\Big|\mathcal{G}_t\right],  \ \ \ 0\leq t\leq T,
\end{align}
where $\Gamma(\cdot)$ is given by (\ref{Gamma}).
\end{lemma}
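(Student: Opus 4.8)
The plan is to reproduce, almost verbatim, the computation used in the proof of Theorem \ref{TLGBDSDE}: I would apply It\^o's formula to the product $\Gamma(t)Y(t)$, with $\Gamma$ the same adjoint process defined by (\ref{Gamma}), and then track the single extra term created by replacing the explicit linear generator by $\chi$. First I would record the two structural facts that make the argument work. Since $\gamma>-1$ a.s., the stochastic exponential $\Gamma$ in (\ref{Gamma}) is a strictly positive semimartingale; and because $\Gamma(t)$ is built from integrals of $\alpha,\beta,\gamma,\delta$ against $W,N$ on $[0,t]$ and against $B$ (whose whole path lies in $\mathcal{F}_T^B\subseteq\mathcal{G}_t$), it is $\mathcal{G}_t$-measurable. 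Strict positivity licenses the sign of the forthcoming residual, and $\mathcal{G}_t$-measurability licenses both pulling $1/\Gamma(t)$ through $E[\cdot\,|\,\mathcal{G}_t]$ and the appearance of the ratios $\Gamma(s)/\Gamma(t)$ in (\ref{Com}).

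Next I would compute $d(\Gamma(t)Y(t))$ exactly as in Theorem \ref{TLGBDSDE}. The only place where equation (\ref{CLGBDSDE}) differs from the linear equation there is the $dt$-drift of $dY$, where $-\chi(t)\,dt$ now stands in place of $-[\alpha(t)Y(t)+\beta(t)Z(t)+\int_{\mathcal{E}}\gamma(t,e)J(t,e)\nu(de)]\,dt$ (with $\phi\equiv0$). Every other contribution is unchanged: the terms from $Y\,d\Gamma$, the forward It\^o cross-variation in $W$, the backward It\^o cross-variation in $B$, the compensator of the jump cross-variation, and the cross terms between the predictable jumps carried by $A$ and the jumps of $\Gamma$. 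In particular $\Gamma(t)\alpha(t)Y(t)$, $\Gamma(t)\beta(t)Z(t)$ and the compensator $\Gamma(t)\int_{\mathcal{E}}\gamma(t,e)J(t,e)\nu(de)$ still appear with a $+$ sign. Whereas in Theorem \ref{TLGBDSDE} these exactly cancelled the corresponding terms of the generator, here they combine with $-\Gamma(t)\chi(t)$ to leave the residual drift
\begin{align*}
-\Gamma(t)\Big[\chi(t)-\alpha(t)Y(t)-\beta(t)Z(t)-\int_{\mathcal{E}}\gamma(t,e)J(t,e)\nu(de)\Big]\,dt\le 0,
\end{align*}
the sign being forced by $\Gamma(t)>0$ together with the standing hypothesis on $\chi$.

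I would then integrate $d(\Gamma Y)$ over $[t,T]$, solve for $\Gamma(t)Y(t)$, and take conditional expectation given $\mathcal{G}_t$. Since $\mathbb{G}=(\mathcal{G}_t)$ is a genuine filtration under which $W$ remains a Brownian motion and $\tilde N$ a compensated Poisson measure, the forward $dW$ and $\tilde N$ integrals are $\mathbb{G}$-martingales and drop out under $E[\cdot\,|\,\mathcal{G}_t]$, exactly as in Theorem \ref{TLGBDSDE} (the integrability needed is supplied by the $L^2$-bounds underlying Theorem \ref{EXUNI} together with boundedness of $\alpha,\beta,\gamma,\delta$); the backward integral $\int_t^T\Gamma(s)\varphi(s)\overleftarrow{d}B(s)$ is retained, since $\mathcal{F}_T^B\subseteq\mathcal{G}_t$. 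The resulting identity is precisely (\ref{SLGBDSDE}) with $\phi\equiv0$ (that is, $\Gamma(t)$ times the right-hand side of (\ref{Com})), augmented by the single extra term $E\big[\int_t^T\Gamma(s)\big(\chi(s)-\alpha(s)Y(s)-\beta(s)Z(s)-\int_{\mathcal{E}}\gamma(s,e)J(s,e)\nu(de)\big)\,ds\,\big|\,\mathcal{G}_t\big]$, which is nonnegative by hypothesis and by $\Gamma>0$. Dividing by the positive, $\mathcal{G}_t$-measurable quantity $\Gamma(t)$ and discarding this nonnegative term yields (\ref{Com}).

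The main obstacle is bookkeeping rather than conceptual: one must reproduce, with the correct signs dictated by the backward It\^o convention (cf. the minus sign on the $B$-quadratic-variation in Proposition \ref{Ito}), the cross-variation term that feeds the $-\int_t^T\Gamma(s)\delta(s)\varphi(s)\,ds$ contribution in (\ref{Com}), and the discrete sums arising when the predictable jumps of $A$ meet the jumps of $\Gamma$, in particular the term $\sum_{t<s\le T}\Gamma(s)\int_{\mathcal{E}}\gamma(s,e)N(\{s\},de)h(s)\Delta A(s)$. The crucial verification is that none of these terms involves $\chi$, so that all of them reappear unchanged and only the nonnegative residual distinguishes the inequality (\ref{Com}) from the equality (\ref{SLGBDSDE}).
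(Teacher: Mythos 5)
Your proposal is correct and follows essentially the same route as the paper: apply It\^{o}'s formula to $\Gamma(t)Y(t)$ with the same adjoint process $\Gamma$ from (\ref{Gamma}), use $\Gamma>0$ together with the hypothesis on $\chi$ to bound the drift by the one appearing in the linear case, and take conditional expectation given $\mathcal{G}_t$ to obtain (\ref{Com}). The only cosmetic difference is that you isolate the nonnegative residual $\Gamma(s)\bigl(\chi(s)-\alpha(s)Y(s)-\beta(s)Z(s)-\int_{\mathcal{E}}\gamma(s,e)J(s,e)\nu(de)\bigr)$ explicitly before discarding it, whereas the paper writes the inequality directly on the differential $d(\Gamma(t)Y(t))$; the substance is identical.
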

\begin{proof}
Using It\^{o}'s formula yields
\begin{align*}
d\Gamma(t)Y(t)=&\Gamma(t)\left\{-\chi(t)dt+Z(t)dW(t)-[\varphi(t)+\delta(t)Y(t)]\overleftarrow{d}B(t)\right.\\
&\left.+\int_{\mathcal{E}}J(t,e)\tilde{N}(dt,de)-h(t)dA(t)\right\}\\
&+Y(t)\Gamma(t)\left\{[\alpha(t)-\delta^2(t)]dt+\beta(t)dW(t)+\delta(t)\overleftarrow{d}B(t)\right.\\
&\left.+\int_{\mathcal{E}}\gamma(t,e)\tilde{N}(dt,de)\right\}+\Gamma(t)\left\{[Z(t)\beta(t)+(Y(t)\delta(t)+\varphi(t))\delta(t)]dt\right.\\
&\left.+\int_{\mathcal{E}}J(t,e)\gamma(t,e)N(dt,de)-\int_{\mathcal{E}}\gamma(t,e)N(\{t\},de)h(t)\Delta A(t)\right\}\\
&\leq \Gamma(t)\left\{-\left[\alpha(t)Y(t)+\beta(t)Z(t)+\int_{\mathcal{E}}\gamma(t,e)J(t,e)\nu(de) \right]dt\right.\\
&\left.+Z(t)dW(t)-[\varphi(t)+\delta(t)Y(t)]\overleftarrow{d}B(t)+\int_{\mathcal{E}}J(t,e)\tilde{N}(dt,de)-h(t)dA(t)\right\}\\
&+Y(t)\Gamma(t)\left\{[\alpha(t)-\delta^2(t)]dt+\beta(t)dW(t)+\delta(t)\overleftarrow{d}B(t)\right.\\
&\left.+\int_{\mathcal{E}}\gamma(t,e)\tilde{N}(dt,de)\right\}+\Gamma(t)\left\{[Z(t)\beta(t)+(Y(t)\delta(t)+\varphi(t))\delta(t)]dt\right.\\
&\left.+\int_{\mathcal{E}}J(t,e)\gamma(t,e)N(dt,de)-\int_{\mathcal{E}}\gamma(t,e)N(\{t\},de)h(t)\Delta A(t)\right\}.
\end{align*}
Taking conditional expectation gives (\ref{Com}).
\end{proof}


\begin{theorem}
Suppose we have two process triples $(Y_1, Z_1, J_1)$ and $(Y_2, Z_2, J_2)$ satisfy
\begin{align*}
\left\{
\begin{array}{lcl}
dY_i(t) &=& -f_i(t, Y_i(t), Z_i(t), J_i(t,\cdot))dt-\delta(t)Y_i(t)\overleftarrow{d}B(t)+Z_i(t)dW(t)\\
&  &  +\int_{\mathcal{E}}J_i(t, e)\tilde{N}(dt,de) -h_i(t)dA(t), \ \ t\in[0,T],\\
 Y_i(T)&=&\xi_i,
\end{array}
\right. \ \ \  i=1,2,
\end{align*}
where $A$ is a continuous increasing process and $f_i(t, y, z, j): \Omega\times[0,T]\times\mathbb{R}\times \mathbb{R}^d\times L^2(\mathcal{E},\nu)\to\mathbb{R}$,
$h_i(t):  \Omega\times[0,T]\to\mathbb{R}$, $i=1,2$  satisfy (A3.2) and (A3.3) and $\delta(t):
\Omega\times[0,T]\to\mathbb{R}$ is a bounded  process and $\delta(t)$ is $\mathcal{F}_t$-measurable for a.e. $t\in[0,T]$.
Assume that there exists a bounded process $\theta$ independent of $y$ and $z$, $\theta(t,e)>-1$ and $\theta\in L^2(\mathcal{E},\nu)$ such that for all $t,y,z$,
\begin{align*}
f_2(t,y,z,j_2(\cdot))-f_2(t,y,z,j_1(\cdot))\geq \int_{\mathcal{E}}\theta(t,e)(j_2(e)-j_1(e))\nu(de).
\end{align*}
Suppose that
\begin{align*}
f_1(t,Y_1(t), Z_1(t), J_1(t,\cdot))\leq f_2(t,Y_1(t), Z_1(t), J_1(t,\cdot)),\ \ \ t\in[0,T], \ \ \ a.s.
\end{align*}
and $\xi_1\leq \xi_2$,  $0\leq h_1(t)\leq h_2(t)$, a.s. Then
$$Y_1(t)\leq Y_2(t), \ \ \ t\in[0,T], \ \ \ a.s.$$
\end{theorem}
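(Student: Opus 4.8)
The plan is to reduce the assertion to the one-sided comparison estimate of Lemma \ref{Co}. Set $\hat{Y}=Y_2-Y_1$, $\hat{Z}=Z_2-Z_1$, $\hat{J}=J_2-J_1$, $\hat{h}=h_2-h_1$ and $\hat{\xi}=\xi_2-\xi_1$. Subtracting the two equations, the triple $(\hat{Y},\hat{Z},\hat{J})$ solves
\begin{align*}
d\hat{Y}(t)=-\chi(t)dt-\delta(t)\hat{Y}(t)\overleftarrow{d}B(t)+\hat{Z}(t)dW(t)+\int_{\mathcal{E}}\hat{J}(t,e)\tilde{N}(dt,de)-\hat{h}(t)dA(t),
\end{align*}
with $\hat{Y}(T)=\hat{\xi}$ and driver $\chi(t)=f_2(t,Y_2(t),Z_2(t),J_2(t,\cdot))-f_1(t,Y_1(t),Z_1(t),J_1(t,\cdot))$. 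This is exactly of the form (\ref{CLGBDSDE}) with $\varphi\equiv0$, so it suffices to verify that $\chi$ dominates the required linear combination and then read off the sign of $\hat{Y}$ from (\ref{Com}).

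First I would linearize $\chi$ by a telescoping decomposition that pivots on $f_2$, in the order $Y$, then $Z$, then the jump term, then the gap between $f_2$ and $f_1$:
\begin{align*}
\chi(t)=&\big[f_2(t,Y_2,Z_2,J_2)-f_2(t,Y_1,Z_2,J_2)\big]+\big[f_2(t,Y_1,Z_2,J_2)-f_2(t,Y_1,Z_1,J_2)\big]\\
&+\big[f_2(t,Y_1,Z_1,J_2)-f_2(t,Y_1,Z_1,J_1)\big]+\big[f_2(t,Y_1,Z_1,J_1)-f_1(t,Y_1,Z_1,J_1)\big].
\end{align*}
By the Lipschitz property (A3.3) of $f_2$, the first bracket equals $\alpha(t)\hat{Y}(t)$ and the second equals $\beta(t)\hat{Z}(t)$ for bounded, $\mathcal{F}_t$-measurable $\alpha,\beta$ built as the usual difference quotients (with $\beta$ a row vector proportional to $\hat{Z}^*/|\hat{Z}|^2$ where $\hat{Z}\neq0$). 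The third bracket is $\geq\int_{\mathcal{E}}\theta(t,e)\hat{J}(t,e)\nu(de)$ by the assumed monotonicity of $f_2$ in the jump variable (applied with $j_2=J_2$, $j_1=J_1$), and the fourth is $\geq0$ by the pointwise ordering $f_1(t,Y_1,Z_1,J_1)\leq f_2(t,Y_1,Z_1,J_1)$. Collecting these yields
\begin{align*}
\chi(t)\geq\alpha(t)\hat{Y}(t)+\beta(t)\hat{Z}(t)+\int_{\mathcal{E}}\theta(t,e)\hat{J}(t,e)\nu(de).
\end{align*}

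With this inequality I would invoke Lemma \ref{Co}, taking $\gamma=\theta$ (admissible since $\theta>-1$, bounded and in $L^2(\mathcal{E},\nu)$), $\varphi\equiv0$, terminal value $\hat{\xi}$ and boundary term $\hat{h}$; one checks $\hat{h}\in L^2(A,\mathbb{F},[0,T])$ from (A3.2). The representation (\ref{Com}) then gives
\begin{align*}
\hat{Y}(t)\geq E\left[\frac{\Gamma(T)}{\Gamma(t)}\hat{\xi}+\int_t^T\frac{\Gamma(s)}{\Gamma(t)}\hat{h}(s)dA(s)\,\Big|\,\mathcal{G}_t\right],
\end{align*}
with $\Gamma$ the exponential (\ref{Gamma}); the jump-sum term of (\ref{Com}) drops out because $A$ is continuous, so $\Delta A\equiv0$. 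Since $\theta>-1$ forces $\Gamma>0$, while $\hat{\xi}\geq0$, $\hat{h}=h_2-h_1\geq0$ and $A$ is increasing, the right-hand side is nonnegative; hence $\hat{Y}(t)\geq0$, i.e. $Y_1(t)\leq Y_2(t)$, a.s.

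The delicate point is the sign bookkeeping in the linearization: the decomposition must pivot on $f_2$ (not $f_1$), because the jump-monotonicity inequality is available only for $f_2$ and the pointwise ordering $f_1\leq f_2$ holds only at the argument $(Y_1,Z_1,J_1)$. Choosing the telescoping order so that every residual term is bounded \emph{below} (rather than above) is precisely what makes the one-sided hypothesis of Lemma \ref{Co} applicable. The continuity of $A$ is equally essential: it removes the term $\int_{\mathcal{E}}\theta(s,e)N(\{s\},de)\,\hat{h}(s)\Delta A(s)$ in (\ref{Com}), whose sign would otherwise be indefinite because $\theta$ may be negative.
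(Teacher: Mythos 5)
Your proposal is correct and follows essentially the same route as the paper: both subtract the two equations, linearize the driver by a telescoping decomposition pivoting on $f_2$ (difference quotients in $y$ and $z$, the jump-monotonicity hypothesis for the $j$-increment, and the pointwise ordering $f_1\leq f_2$ at $(Y_1,Z_1,J_1)$ for the residual), and then conclude via Lemma \ref{Co} with $\gamma=\theta$ and $\varphi\equiv0$. If anything, your write-up is slightly more careful than the paper's, which records the jump bracket as an equality before passing to the inequality, and you correctly flag that continuity of $A$ kills the $\Delta A$ sum in (\ref{Com}).
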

\begin{proof}
Define
\begin{align*}
(\bar{Y}, \bar{Z}, \bar{J})=(Y_2-Y_1, Z_2-Z_1, J_2-J_1), \bar{h}=h_2-h_1, \bar{\xi}=\xi_2-\xi_1.
\end{align*}
Then
\begin{align*}
\left\{
\begin{array}{lcl}
d\bar{Y}(t) &=& -[f_2(t, \Lambda_2(t))-f_1(t,\Lambda_1(t))]dt-\delta(t)\bar{Y}(t)\overleftarrow{d}B(t)+\bar{Z}(t)dW(t)\\
&  &  +\int_{\mathcal{E}}\bar{J}(t, e)\tilde{N}(dt,de) -\bar{h}(t)dA(t), \ \ t\in[0,T],\\
 \bar{Y}(T)&=&\bar{\xi}.
\end{array}
\right.
\end{align*}
Denote
\begin{align*}
\chi(t):&=f_2(t, Y_2(t), Z_2(t), J_2(t,\cdot))-f_1(t, Y_1(t), Z_1(t), J_1(t,\cdot)), \\
\kappa(t): &=f_2(t, Y_1(t), Z_1(t), J_1(t,\cdot))-f_1(t, Y_1(t), Z_1(t), J_1(t,\cdot)), \\
\alpha(t):&=\frac{1}{\bar{Y}(t)}[f_2(t, Y_2(t), Z_2(t), J_2(t,\cdot))-f_2(t, Y_1(t), Z_2(t), J_2(t,\cdot))\mathbf{1}_{\bar{Y}(t)\neq0},\\
\beta(t):&=\frac{1}{\bar{Z}(t)}[f_2(t, Y_1(t), Z_2(t), J_2(t,\cdot))-f_2(t, Y_1(t), Z_1(t), J_2(t,\cdot))\mathbf{1}_{\bar{Z}(t)\neq0}.
\end{align*}
We note that the processes $\alpha$ and $\beta$ are jointly measurable bounded processes. Using the assumptions and combining the above notations, we get
\begin{align*}
\left\{
\begin{array}{lcl}
d\bar{Y}(t) &=& -\chi(t)dt-\delta(t)\bar{Y}(t)\overleftarrow{d}B(t)+\bar{Z}(t)dW(t)\\
&  &  +\int_{\mathcal{E}}\bar{J}(t, e)\tilde{N}(dt,de) -\bar{h}(t)dA(t), \ \ \ 0\leq t\leq T,\\
 \bar{Y}(T)&=&\bar{\xi},
\end{array}
\right.
\end{align*}
where
\begin{align*}
\chi(t)&=\kappa(t)+\alpha(t)\bar{Y}(t)+\beta(t)\bar{Z}(t)+\int_{\mathcal{E}}\theta(t,e)\bar{J}(t,e)\nu(de)\\
&\geq\alpha(t)\bar{Y}(t)+\beta(t)\bar{Z}(t)+\int_{\mathcal{E}}\theta(t,e)\bar{J}(t,e)\nu(de).
\end{align*}
From Lemma \ref{Co}, we have $\bar{Y}(t)\geq0$ for all $t$, i.e., $Y_2(t)\geq Y_1(t)$ for all $t$.
\end{proof}

\section{Viscosity solutions to IPDEs with nonlinear Neumann boundary conditions}\label{sec4}
In this section, we present a probabilistic formula for the solution of a parabolic IPDE with nonlinear Neumann boundary condition.

Given a function $\phi\in C_b^2(\mathbb{R}^n)$, let $\mho$ be an open connected bound subset of $\mathbb{R}^n$ such that
$\mho=\{\phi>0\}$, $\partial\mho=\{\phi=0\}$, and $|\nabla\phi(x)|=1$, $x\in\partial\mho$.
Consider the SDE with reflection
\begin{align}\label{RSDE}
\left\{\begin{array}{lcl}
X^{t,x}(s)&=&x+\int_t^{t\vee s} b(r, X^{t,x}(r))dr+\int_t^{t\vee s} \sigma(r, X^{t,x}(r))dW(r)\\
& &+\int_t^{t\vee s}\gamma(r, X^{t,x}(r), e)d\tilde{N}(dr,de) +\int_t^{t\vee s} \nabla\phi( X^{t,x}(r))dA^{t,x}(r), \\
A^{t,x}(s)&=&\int_t^{t\vee s}\mathbf{1}_{\{X^{t,x}(r)\in \partial\mho\}}dA^{t,x}(r), \ \ \  A \ \mbox{is increasing}, \ \ \ 0\leq s\leq T,
\end{array}
\right.
\end{align}
where $b: [0, T]\times \bar{\mho}\to \mathbb{R}^n$, $\sigma: [0, T]\times \bar{\mho}\to \mathbb{R}^{n\times d}$, $\gamma: [0, T]\times \bar{\mho}\times \mathcal{E}\to \mathbb{R}^n$
such that for some $C$, all $r\in[0,T], x, x'\in \mathbb{R}^n$:
\begin{align*}
&|b(r, x)|^2+|\sigma(r, x)|^2+\int_{\mathcal{E}}|\gamma(r, x,e)|^2\nu(de)\leq C,\\
&|b(r, x)- b(r, x')|^2+|\sigma(r, x)- \sigma(r, x')|^2+\int_{\mathcal{E}}|\gamma(r, x,e)- \gamma(r, x',e)|^2\nu(de)\leq C|x- x'|^2.
\end{align*}

In this section and the next section, we assume that $A$ is continuous and $x+\gamma(t,x,e)\in\bar{\mho}$, $\forall t\in[0,T]$, $\forall x\in\bar{\mho}$, $\forall e\in\mathcal{E}$. 
\begin{proposition}\label{pro3}
For any $x\in \bar{\mho}, t\in[0,T]$, there exists a unique pair of $\mathcal{F}_t^W\vee\mathcal{F}_t^N$-progressively measurable processes $\{(X^{t,x}(s), A^{t,x}(s)), s\geq 0\}$ with values
in $\bar{\mho}\times \mathbb{R}_+$ solving the equation (\ref{RSDE}) and for all $ x, x'\in \bar{\mho}$, there exists a positive constant $C$ such that
\begin{align*}
E\left[\sup_{0\leq s\leq T}|X^{t,x}(s)-X^{t,x'}(s)|^4\right]\leq C |x-x'|^4, \ \ \ E\left[\sup_{0\leq s\leq T}|A^{t,x}(s)-A^{t,x'}(s)|^4\right]\leq C |x-x'|^4.
\end{align*}
\end{proposition}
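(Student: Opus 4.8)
The plan is to combine the deterministic Skorokhod problem for the smooth domain $\mho$ with a fixed-point argument, and then to derive the two moment estimates by applying the It\^o formula of Proposition \ref{Ito} together with a geometric inequality coming from the $C_b^2$-regularity of $\phi$. Throughout, the standing assumption $x+\gamma(t,x,e)\in\bar{\mho}$ is decisive: it guarantees that the Poisson jumps of the solution always land back in $\bar{\mho}$, so that the reflecting process $A^{t,x}$ stays continuous and only compensates the continuous part of the dynamics. Consequently the reflection never acts at a jump time, and the analysis of the boundary term is effectively ``continuous'' even though the driving noise has jumps.

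For existence and uniqueness I would fix $t$ and work on the forward family $\mathbb{F}^{W\vee N}$. Given a progressively measurable $\bar{\mho}$-valued c\`adl\`ag process $U$, I would form the input path
\begin{align*}
y_U(s)=x+\int_t^{t\vee s}b(r,U(r))dr+\int_t^{t\vee s}\sigma(r,U(r))dW(r)+\int_t^{t\vee s}\int_{\mathcal{E}}\gamma(r,U(r),e)\tilde{N}(dr,de),
\end{align*}
and solve the Skorokhod problem $X=y_U+\int_t^{t\vee\cdot}\nabla\phi(X(r))dA(r)$ for the domain $\mho$ to obtain a map $\Phi(U):=X$. Because $\partial\mho$ is $C_b^2$ and the jumps of $y_U$ already lie in $\bar{\mho}$, the associated Skorokhod map is Lipschitz in the supremum norm; combining this with the global boundedness and Lipschitz hypotheses on $b,\sigma,\gamma$ and with Doob's and the Burkholder--Davis--Gundy inequalities, I would show that $\Phi$ is a contraction on a short interval (or under an exponentially weighted norm) and then patch intervals to cover $[t,T]$. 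This yields a unique pair $(X^{t,x},A^{t,x})$ with the stated measurability and $\bar{\mho}\times\mathbb{R}_+$-valued trajectories; these arguments are the jump analogue of those in Pardoux and Zhang \cite{PardouxZhang1998}.

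The engine for both estimates is the following inequality: since $\phi(x)=0\le\phi(x')$ for $x\in\partial\mho$, $x'\in\bar{\mho}$, a second-order Taylor expansion of $\phi$ gives
\begin{align*}
\langle x-x',\nabla\phi(x)\rangle\le \tfrac12\|\nabla^2\phi\|_\infty|x-x'|^2,\qquad x\in\partial\mho,\ x'\in\bar{\mho}.
\end{align*}
Writing $\bar{X}=X^{t,x}-X^{t,x'}$ and recalling that $dA^{t,x}$ lives on $\{X^{t,x}\in\partial\mho\}$ (and symmetrically for $dA^{t,x'}$), this controls the two reflection cross-terms by $C|\bar{X}|^2(dA^{t,x}+dA^{t,x'})$. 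I would then apply the It\^o formula of Proposition \ref{Ito} to $|\bar{X}(s)|^4$, bound the drift and the quadratic (diffusion and jump) contributions by $C|\bar{X}|^2$ through the Lipschitz assumptions on $b,\sigma,\gamma$, estimate the martingale parts by BDG, and absorb the boundary contributions using the displayed inequality. A standard a priori bound---obtained by applying It\^o to $\phi(X^{t,x})$ and using $|\nabla\phi|=1$ on $\partial\mho$---provides $E[(A^{t,x}(T))^p]<\infty$ for every $p$, which is exactly what is needed to close the resulting Gronwall inequality and deliver $E[\sup_s|\bar{X}(s)|^4]\le C|x-x'|^4$. The companion bound on $A^{t,x}-A^{t,x'}$ then follows by isolating $\int\nabla\phi(X)\,dA$ from the equation for $\bar{X}$ and invoking the estimate just obtained for $X$.

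The step I expect to be the main obstacle is closing the fourth-moment estimate in the presence of the random boundary measures: the cross-terms produce integrals of the form $\int_t^{s}|\bar{X}(r)|^4\,dA^{t,x}(r)$, so a deterministic Gronwall lemma does not apply directly. Handling this cleanly requires the uniform moment bounds on the reflecting processes $A^{t,x}$ together with a Gronwall argument with respect to the increasing random clock $A^{t,x}+A^{t,x'}$ (alternatively, an $e^{-\mu(A^{t,x}+A^{t,x'})}$ weight in the spirit of the existence proof of Theorem \ref{EXUNI}); verifying the integrability that makes the boundary terms genuinely \emph{absorbed}, rather than merely bounded, is the delicate point. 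A secondary technicality is confirming that the Skorokhod map retains its Lipschitz property for c\`adl\`ag inputs whose jumps stay inside $\bar{\mho}$, which is precisely where the assumption $x+\gamma(t,x,e)\in\bar{\mho}$ is used.
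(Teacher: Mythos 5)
Your proposal is correct in outline and follows essentially the same route as the paper, whose entire proof consists of citing Menaldi and Robin for existence/uniqueness of the reflected jump SDE and Lions--Sznitman/Pardoux--Zhang for the regularity estimates; the Skorokhod-problem fixed point, the second-order Taylor inequality for $\phi$ at the boundary, and the exponentially weighted Gronwall argument with the exponential moments of $A^{t,x}$ are exactly the ingredients of those references. The only caveat worth noting is that for a general $C_b^2$ bounded domain the Skorokhod map need not be globally Lipschitz in the supremum norm, so the contraction step should be run through the $\phi$-based estimates (as in Lions--Sznitman) rather than through a Lipschitz property of the map itself.
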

\begin{proof}
The existence and uniqueness of solution to the equation (\ref{RSDE}) follows from  Menaldi and Robin \cite{Menaldi1985}.  Adopting  the same method as in Lions and Sznitman \cite{Lions1984} (see also Pardoux and Zhang \cite{PardouxZhang1998}),  we can deduce the regularity results.
\end{proof}

Consider the GBSDE with jumps:
\begin{align}\label{VGBSDE}
\begin{array}{lcl}
Y^{t,x}(s)&=&l(X^{t,x}(T))+\int_{s}^Tf\left(r, X^{t,x}(r), Y^{t,x}(r), Z^{t,x}(r), \int_{\mathcal{E}}J^{t,x}(r,e)\nu(de)\right)dr\\
& & -\int_{s}^TZ^{t,x}(r)dW(r)- \int_{ s}^T\int_{\mathcal{E}}J^{t,x}(r,e)\tilde{N}(dr,de)\\
& & +\int_{ s}^Th(r,X^{t,x}(r), Y^{t,x}(r))dA^{t,x}(r), \ \ \ t\leq s\leq T,
\end{array}
\end{align}
where  $l: \bar{\mho}\to \mathbb{R}$,
$f: [0, T]\times \bar{\mho} \times \mathbb{R} \times \mathbb{R}^d \times \mathbb{R}\to \mathbb{R}$ and $h: [0, T]\times \bar{\mho}\times \mathbb{R}^d\to \mathbb{R}$ are continuous.

Assume that\\
(A4.1) for some $C>0$ and $C_0<0$, all $x, x'\in\bar{\mho}$, $y, y'\in \mathbb{R}$, $z, z'\in \mathbb{R}^d$, $j, j'\in \mathbb{R}$:
\begin{align*}
&|f(s,x,y,z,j)|^2\leq C(1+|x|^2+|y|^2+|z|^2+|j|^2), \\
&|h(s,x,y)|^2\leq C(1+|x|^2+|y|^2), \ \ \ |l(x)|^2\leq C(1+|x|^2),\\
&(y-y')(h(s, x, y)- h(s,x',y'))\leq C|x- x'|+C_0|y- y'|,\\
&|h(s, x, y)- h(s,x',y')|^2\leq C(|x- x'|^2+|y- y'|^2),\\
&|f(s, x, y,z,j)- f(s,x',y',z',j')|^2\leq C(|x- x'|^2+|y- y'|^2+|z- z'|^2+|j- j'|^2).
\end{align*}
(A4.2) for each $(t,x,y,z)\in[0,T]\times\bar{\mho}\times\mathbb{R}\times\mathbb{R}^d$, the function $j\mapsto f(t,x,y,z,j)$ is non-decreasing.

\begin{proposition}\label{continuous}
Under the assumption (A4.1), the GBSDE with jumps (\ref{VGBSDE}) has a unique solution $\{(Y^{t,x}(s), Z^{t,x}(s), J^{t,x}(s,\cdot)), t\leq s\leq T\}\in V^2(\mathbb{F}^{W\vee N},[t,T];\mathbb{R}\times \mathbb{R}^d\times L^2(\mathcal{E},\nu))$ and $(t,x)\mapsto Y^{t,x}(t)$ is a deterministic mapping from $[0,T]\times \bar{\mho}$. In addition,
 the deterministic mapping $(t,x)\mapsto Y^{t,x}(t)$ is continuous in $(t,x)$.
\end{proposition}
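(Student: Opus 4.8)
The plan is to establish the three assertions of Proposition \ref{continuous}—well-posedness, the deterministic nature of $Y^{t,x}(t)$, and its joint continuity—in that order.

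First, for existence and uniqueness I would verify that the data of (\ref{VGBSDE}) are square integrable: by the growth bounds in (A4.1) together with the fourth-moment estimates of Proposition \ref{pro3}, one has $l(X^{t,x}(T))\in L^2(\mathcal{F}_T^{W\vee N})$, $f(\cdot,X^{t,x}(\cdot),0,0,0)\in L^2(\mathbb{F}^{W\vee N},[t,T])$ and $h(\cdot,X^{t,x}(\cdot),0)\in L^2(A^{t,x},\mathbb{F}^{W\vee N},[t,T])$. Since $f$ is Lipschitz in $(y,z,j)$ and $h$ is Lipschitz in $y$ with the dissipativity constant $C_0<0$ controlling the $dA^{t,x}$ term, the map sending $(y,z,j)$ to the solution of the associated linear equation is a strict contraction once the underlying norm is weighted by $e^{\mu A^{t,x}(s)}$, exactly as in the proof of Theorem \ref{EXUNI} (with $g\equiv0$ and now in the genuine filtration $\mathbb{F}^{W\vee N}$). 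The Poisson part is handled through the martingale representation for jump diffusions and the $L^2(\mathcal{E},\nu)$ norm on $J$, so existence and uniqueness in $V^2(\mathbb{F}^{W\vee N},[t,T];\mathbb{R}\times\mathbb{R}^d\times L^2(\mathcal{E},\nu))$ follow; alternatively one may invoke the generalized-BSDE theory of Pardoux and Zhang \cite{PardouxZhang1998} adapted to jumps as in Wu and Liu \cite{Wu2018}.

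Second, to see that $(t,x)\mapsto Y^{t,x}(t)$ is deterministic I would observe that the reflected forward pair $(X^{t,x},A^{t,x})$ depends only on the increments of $W$ and $N$ after time $t$, hence is measurable with respect to $\mathcal{G}^t_s:=\sigma\{W(r)-W(t),\,N((t,r]\times\cdot):t\le r\le s\}\vee\mathcal{N}$. Running the Picard iteration that produces the solution of (\ref{VGBSDE}) inside this smaller filtration shows that $(Y^{t,x},Z^{t,x},J^{t,x})$ is $\mathbb{G}^t$-adapted; in particular $Y^{t,x}(t)$ is $\mathcal{G}^t_t$-measurable, and since $\mathcal{G}^t_t$ is trivial, $Y^{t,x}(t)$ is a.s. a constant. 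I then write $u(t,x):=Y^{t,x}(t)$.

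Finally, for continuity I would proceed in two steps. For fixed $t$ and the difference $\bar Y=Y^{t,x}-Y^{t,x'}$ (and likewise $\bar Z,\bar J$), I apply the It\^{o} formula of Proposition \ref{Ito} to $e^{\mu A^{t,x}(s)}|\bar Y(s)|^2$; the monotonicity condition $(y-y')(h(s,x,y)-h(s,x',y'))\le C|x-x'|+C_0|y-y'|$ with $C_0<0$ lets me absorb the $dA^{t,x}$ contribution, the Lipschitz bounds on $f$ and $l$ control the remaining terms, and a Burkholder--Davis--Gundy plus Gronwall argument, fed with $E[\sup_s|X^{t,x}(s)-X^{t,x'}(s)|^4]\le C|x-x'|^4$ and the analogous bound for $A^{t,x}$ from Proposition \ref{pro3}, yields $E[\sup_{t\le s\le T}|Y^{t,x}(s)-Y^{t,x'}(s)|^2]\le C|x-x'|^2$, hence $|u(t,x)-u(t,x')|\le C|x-x'|$. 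Continuity in $t$ is then obtained from the continuity of the flow $t\mapsto(X^{t,x},A^{t,x})$ together with the same stability estimate, after extending both solutions to a common subinterval of $[0,T]$. I expect the main obstacle to be precisely this time-continuity step: varying $t$ changes simultaneously the horizon $[t,T]$, the starting point of the reflected diffusion, and the boundary process $A^{t,x}$, so the stability analysis must control the $\int h\,dA^{t,x}$ term uniformly in the total variation of $A^{t,x}$—which is exactly where the dissipativity $C_0<0$ and the monotonicity assumption (A4.2) on $j\mapsto f$ become indispensable—while the jump integral $J^{t,x}$ must be carried throughout in the $L^2(\mathcal{E},\nu)$ norm.
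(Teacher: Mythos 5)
Your proposal follows essentially the same route as the paper: existence and uniqueness via Theorem \ref{EXUNI} with $g\equiv 0$, the deterministic character of $Y^{t,x}(t)$ from its measurability with respect to the (trivial) $\sigma$-field generated by the increments of $W$ and $N$ up to time $t$, and joint continuity in $(t,x)$ via an It\^{o}/Gronwall/Burkholder--Davis--Gundy estimate fed by the fourth-moment bounds of Proposition \ref{pro3} and the extension $Y^{t,x}(s)=Y^{t,x}(t)$ for $s\le t$. The only refinements worth noting are that the exponential weight must be $e^{\mu\tilde A(s)}$ with $\tilde A(s)=\|A^{t,x}-A^{t',x'}\|(s)+A^{t',x'}(s)$ (not $e^{\mu A^{t,x}(s)}$), since otherwise neither the $dA^{t',x'}$ term nor the total-variation term $\int \bar Y^2\, d\|A^{t,x}-A^{t',x'}\|$ can be absorbed into the left-hand side, and that assumption (A4.2) plays no role in this proposition (it is only used for the comparison result).
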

\begin{proof}
It follows from Theorem \ref{EXUNI} (when $g\equiv0$) that the GBSDE with jumps (\ref{VGBSDE}) has a unique solution $\{(Y^{t,x}(s), Z^{t,x}(s), J^{t,x}(s,\cdot)), t\leq s\leq T\}\in  V^2(\mathbb{F}^{W\vee N},[t,T];\mathbb{R}\times \mathbb{R}^d\times L^2(\mathcal{E},\nu))$. Clearly, for each $t\leq s\leq T$, $Y^{t,x}(s)$ is $\mathcal{F}^W_{t,s}\vee \mathcal{F}^N_{t,s}$ measurable. Thus $(t,x)\mapsto Y^{t,x}(t)$ is a deterministic mapping from $[0,T]\times \bar{\mho}$.

Let $(t,x)$ and $(t',x')$ be two elements of $[0,T]\times\bar{\mho}$.  Without loss of generality, we assume $A(T)$ is a bounded random variable.
For notational simplicity we define
\begin{align*}
\bar{Y}(s)&=Y^{t,x}(s)-Y^{t',x'}(s), \ \  \bar{Z}(s)=Z^{t,x}(s)-Z^{t',x'}(s), \\
 \ \bar{J}(s,\cdot)&=J^{t,x}(s,\cdot)-J^{t',x'}(s,\cdot), \ \ \bar{A}(s)=A^{t,x}(s)-A^{t',x'}(s), \ \  \tilde{A}(s)=\|\bar{A}\|(s)+A^{t',x'}(s),\\
 \bar{f}(s)&=f\left(s,X^{t,x}(s), Y^{t,x}(s), Z^{t,x}(s), \int_\mathcal{E}J^{t,x}(s,e)\nu(de)\right)\\
 & \ \ \  -f\left(s,X^{t',x'}(s), Y^{t',x'}(s), Z^{t',x'}(s), \int_\mathcal{E}J^{t',x'}(s,e)\nu(de)\right),\\
 \bar{h}(s)&=h(s,X^{t,x}(s), Y^{t,x}(s))-h(s,X^{t',x'}(s), Y^{t',x'}(s)).
\end{align*}
where $\|\bar{A}\|(s)$ is the total variation of the process $\bar{A}$ on the interval $[0,s]$.

Using It\^{o}'s formula, assumption (A4.1) and Young's inequality, we derive that for every $\mu>0$, $\delta_1>0$ and $\delta_2>0$,
\begin{align*}
E[e^{\mu\tilde{A}(s)}&\bar{Y}^2(s)]+E\left[\int_s^Te^{\mu\tilde{A}(r)}\{\bar{Z}^2(r)+\|\bar{J}(r,\cdot)\|_\nu^2\}dr
+\int_s^T\mu e^{\mu\tilde{A}(r)}\bar{Y}^2(r)d\tilde{A}(r)\right]\\
 =&E\left[e^{\mu\tilde{A}(T)}[l(X^{t,x}(T))-l(X^{t',x'}(T))]^2\right]+2E\left[\int_s^Te^{\mu\tilde{A}(r)}\bar{Y}(r)\bar{f}(r)dr\right]\\
&+2E\left[\int_s^Te^{\mu\tilde{A}(r)}\bar{Y}(r)\bar{h}(r)dA^{t',x'}(r)\right]+2E\left[\int_s^Te^{\mu\tilde{A}(r)}\bar{Y}(r)h(r, X^{t,x}(r), Y^{t,x}(r))d\bar{A}(r)\right]\\
 \leq &E\left[e^{\mu\tilde{A}(T)}[l(X^{t,x}(T))-l(X^{t',x'}(T))]^2\right]
 +CE\left[\int_0^Te^{\mu\tilde{A}(r)}\bar{Y}^2(r)dr\right]\\
&+\frac{1}{2}E\left[\int_s^Te^{\mu\tilde{A}(r)}\{\bar{Z}^2(r)+\|\bar{J}(r,\cdot)\|_\nu^2\}dr\right]+CE\left[\int_0^Te^{\mu\tilde{A}(r)}\bar{X}^2(r)dr\right]\\
&+CE\left[\int_0^Te^{\mu\tilde{A}(r)}h^2(r, X^{t,x}(r), Y^{t,x}(r))d\|\bar{A}\|(r)\right]+CE\left[\int_0^Te^{\mu\tilde{A}(r)}\bar{X}^2(r)dA^{t',x'}(r)\right]\\
&+\delta_1 E\left[\int_s^T e^{\mu\tilde{A}(r)}\bar{Y}^2(r)d\|\bar{A}\|(r)\right]+(\delta_2+2C_0) E\left[\int_s^T e^{\mu\tilde{A}(r)}\bar{Y}^2(r)dA^{t',x'}(r)\right].
\end{align*}
Choosing $\delta_1=\mu$ and $\delta_2=\mu-2C_0$,  and using  Gronwall's lemma and Burkholder-Davis-Gundy's inequality, we deduce that
\begin{align*}
E\left[\sup_{0\leq s\leq T}e^{\mu\tilde{A}(s)}\bar{Y}^2(s)\right]\leq & E\left[e^{\mu\tilde{A}(T)}[l(X^{t,x}(T))-l(X^{t',x'}(T))]^2\right]+CE\left[\int_0^Te^{\mu\tilde{A}(r)}\bar{X}^2(r)dr\right]\\
&+CE\left[\sup_{0\leq r\leq T}e^{\mu\tilde{A}(T)}(1+|X^{t,x}(r)|^2+|Y^{t,x}(r)|^2)\|\bar{A}\|(T)\right]\\
&+CE\left[\int_0^Te^{\mu\tilde{A}(r)}\bar{X}^2(r)dA^{t',x'}(r)\right].
\end{align*}
We define $Y^{t,x}(s)$ for all $s\in[0,T]$ by $Y^{t,x}(s)=Y^{t,x}(t)$ for $0\leq s\leq t$. Therefore, we have
\begin{align*}
e^{\mu\tilde{A}(0)}|Y^{t,x}(t)-Y^{t',x'}(t')|^2=e^{\mu\tilde{A}(0)}|Y^{t,x}(0)-Y^{t',x'}(0)|^2\leq E\left[\sup_{0\leq s\leq T}e^{\mu\tilde{A}(s)}\bar{Y}^2(s)\right].
\end{align*}
The result follows from Proposition \ref{pro3} and the continuity of the function $l$.
\end{proof}

From the argument of the proof of the comparison theorem for GBSDE's without jumps as in Pardoux and Zhang \cite{Pardoux1998}, we can obtain the following comparison theorem for GBSDE with jumps.
\begin{proposition}\label{comparison}
Let $l_1(X^{t,x}(T)), l_2(X^{t,x}(T))\in L^2(\mathcal{F}_T)$ and denote by $(Y_1,Z_1,J_1)$ and $(Y_2,Z_2,J_2)$ the unique solutions of equation (\ref{VGBSDE}) with final conditions $l_1(X^{t,x}(T))$ and $l_2(X^{t,x}(T))$, respectively.
Suppose $l_2(X^{t,x}(T))\geq l_1(X^{t,x}(T))$,
then under the assumptions (A4.1)-(A4.2) it follows $Y_2^{t,x}(s)\geq Y_1^{t,x}(s)$, for $t\leq s\leq T$.
\end{proposition}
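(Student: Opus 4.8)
The plan is to reduce the comparison to a single linear GBSDE with jumps satisfied by the difference $\bar Y:=Y_2-Y_1$, and then to exhibit a strictly positive adjoint (integrating) process that turns $\bar Y$ into a nonnegative martingale. First I would set $\bar Y=Y_2^{t,x}-Y_1^{t,x}$, $\bar Z=Z_2-Z_1$, $\bar J=J_2-J_1$ and $\bar\xi=l_2(X^{t,x}(T))-l_1(X^{t,x}(T))\geq 0$, and subtract the two copies of (\ref{VGBSDE}) (both share the same $f$ and $h$). The equation for $(\bar Y,\bar Z,\bar J)$ has driver $f(s,X,Y_2,Z_2,\int_{\mathcal E}J_2\nu)-f(s,X,Y_1,Z_1,\int_{\mathcal E}J_1\nu)$ and boundary-type increment $h(s,X,Y_2)-h(s,X,Y_1)$. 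Using the Lipschitz bounds in (A4.1) I linearize the $y$- and $z$-increments into bounded coefficients $a(\cdot)$ and $b(\cdot)$ (dividing by $\bar Y$, resp. $\bar Z$, on the sets where they do not vanish, exactly as in Lemma \ref{Co} and the preceding theorem). For the jump increment I use that $f$ depends on $J$ only through the scalar $\int_{\mathcal E}J\nu(de)$, is Lipschitz in that variable by (A4.1) and nondecreasing by (A4.2); hence the difference equals $c(s)\int_{\mathcal E}\bar J(s,e)\nu(de)$ with $0\leq c(s)\leq\sqrt C$. Finally, the dissipativity of $h$ in $y$ (the $C_0<0$ monotonicity in (A4.1)) lets me write $h(s,X,Y_2)-h(s,X,Y_1)=\ell(s)\bar Y(s)$ with $\ell(s)\leq 0$ bounded.

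Next I would introduce the scalar adjoint process $\Gamma$ solving the linear SDE
\begin{align*}
d\Gamma(s)=\Gamma(s)\Big[a(s)\,ds+b(s)\,dW(s)+\int_{\mathcal E}c(s)\tilde{N}(ds,de)+\ell(s)\,dA^{t,x}(s)\Big],\qquad \Gamma(0)=1,
\end{align*}
which is given explicitly by a Dol\'eans--Dade exponential of the same form as (\ref{Gamma}), with the $\overleftarrow{d}B$-part absent and an additional $\int_0^\cdot\ell\,dA^{t,x}$ in the exponent. Because $c(s)\geq 0>-1$, the jumps satisfy $\Delta\Gamma=\Gamma^-c\geq 0$ and $\Gamma$ stays strictly positive; this is exactly where (A4.2) enters. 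Applying It\^o's formula in the product form used in the proof of Lemma \ref{Co}, and using that $A^{t,x}$ is continuous throughout this section, all finite-variation terms of $\Gamma\bar Y$ cancel by construction: the $ds$-drifts $a\bar Y$, $b\bar Z$ and the jump compensator $c\int_{\mathcal E}\bar J\nu$ are matched by the corresponding contributions of $d\Gamma$ and of the quadratic covariation, while the two $dA$-terms $\mp\,\Gamma\ell\bar Y$ annihilate each other. Thus $\Gamma\bar Y$ is a local martingale on $[t,T]$ with respect to $\mathbb F^{W\vee N}$.

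It then remains to upgrade this local martingale to a genuine martingale. Using the a priori $\mathcal S^2$-bounds from Proposition \ref{continuous}, the boundedness of $a,b,c,\ell$, and (as in the proof of Proposition \ref{continuous}, without loss of generality) the boundedness of $A^{t,x}(T)$, I would verify the uniform integrability of $\{\Gamma(s)\bar Y(s)\}$, so that $\Gamma(s)\bar Y(s)=E[\Gamma(T)\bar\xi\,|\,\mathcal F_s^W\vee\mathcal F_s^N]$. Since $\Gamma(T)>0$ and $\bar\xi\geq 0$, the right-hand side is nonnegative, and dividing by $\Gamma(s)>0$ gives $\bar Y(s)\geq 0$, i.e. $Y_2^{t,x}(s)\geq Y_1^{t,x}(s)$ for $t\leq s\leq T$. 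Equivalently, one may invoke Lemma \ref{Co} directly (with $\varphi\equiv 0$, $\delta\equiv 0$) once the monotone term $\ell\bar Y$ has been absorbed into $\Gamma$ through the $dA$-exponent.

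The \emph{main obstacle} is the jump term. Two points require care. First, the linearized jump coefficient $c$ is constant in $e$, so incorporating it into the exponential $\Gamma$ and compensating the associated $\tilde{N}$-integral must be justified; this is transparent when $\nu$ is a finite measure, which is implicit in the well-posedness of the quantity $\int_{\mathcal E}J\nu(de)$ appearing in (\ref{VGBSDE}). Second, $\Gamma$ must remain a strictly positive true martingale: positivity relies decisively on (A4.2) forcing $c\geq 0$, and the martingale property relies on the integrability estimates that rule out a strict local martingale. Establishing this uniform integrability in the presence of the increasing process $A^{t,x}$ is the technical crux of the argument.
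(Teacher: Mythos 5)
Your argument is correct, but it follows a different route from the one the paper actually takes: the paper disposes of Proposition \ref{comparison} in one line by transplanting the Pardoux--Zhang comparison argument, i.e.\ applying It\^{o}'s formula to the squared negative part $\left((Y_1-Y_2)^+\right)^2$ of the difference, using the Lipschitz and monotonicity conditions in (A4.1)--(A4.2) to dominate the drift and jump contributions, and concluding by Gronwall that this quantity vanishes. You instead linearize the difference equation and build a strictly positive Dol\'eans--Dade adjoint $\Gamma$ so that $\Gamma\bar Y$ becomes a (uniformly integrable) martingale with nonnegative terminal value --- which is precisely the machinery the paper develops in Section 3 (Lemma \ref{Co} and the theorem following it), here specialized to $g\equiv 0$ and to a driver depending on $J$ only through $\int_{\mathcal E}J\,\nu(de)$. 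Your route is more constructive (it yields an explicit representation of $\bar Y$ as a conditional expectation) and makes transparent exactly where (A4.2) is used, namely to force the linearized jump coefficient $c\geq 0>-1$ so that $\Gamma$ stays positive; the negative-part route is shorter but hides the same structural requirement inside the estimate of the jump term. Your identification of the implicit finiteness of $\nu$ and of the role of $\ell\leq 0$ in keeping $e^{\int\ell\,dA^{t,x}}$ bounded are genuine points the paper glosses over. Two cosmetic repairs: the linearized coefficient in $z$ should be the vector $\bigl[f(\cdot,Z_2,\cdot)-f(\cdot,Z_1,\cdot)\bigr]\bar Z/|\bar Z|^2\mathbf{1}_{\{\bar Z\neq 0\}}$ rather than a scalar quotient, and $\Gamma$ should be initialized at time $t$ (or one should work with the ratio $\Gamma(s)/\Gamma(t)$) since the comparison is on $[t,T]$.
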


Consider the following IPDE with nonlinear Neumann boundary condition:
\begin{align}\label{PDE}
\left\{\begin{array}{l}
\frac{\partial u}{\partial t}(t,x)+\mathcal{L}u(t,x)+f\left(t,x,u(t,x), (\nabla u\sigma)(t,x), \mathcal{A}u(t,x)\right)=0, \ \ \ (t,x)\in [0,T]\times \mho,\\
\frac{\partial u}{\partial \mathbf{n}}(t,x)+h(t,x,u(t,x))=0, \ \ \ (t,x)\in [0,T]\times \partial\mho,\\
u(T,x)=l(x), \ \ \ x\in \bar{\mho},
\end{array}
\right.
\end{align}
where $\frac{\partial u}{\partial \mathbf{n}}(t,x)=\nabla\phi\cdot\nabla u$ and the second-order integral-differential operator $\mathcal{L}$ (the infinitesimal generator of the Markov process $\{X^{t,x}(s), t\leq s\leq T\}$ is of the form
\begin{align*}
&\mathcal{L}u(s,x)=b(s,x)\cdot\nabla u(s,x)+\frac{1}{2}\mbox{tr}\{\sigma\sigma^*(s,x)\nabla^2u(s,x)\}\\
&+\int_{\mathcal{E}}\left[u(s,x+\gamma(s,x,e))-u(s,x)-\nabla u(s,x)\cdot\gamma(s,x,e)\right]\nu(de),
\end{align*}
and $\mathcal{A}$ is an integral operator defined as
\begin{align*}
\mathcal{A}u(s,x)=\int_{\mathcal{E}}\left[u(s,x+\gamma(s,x,e))-u(s,x)\right]\nu(de).
\end{align*}

Applying It\^{o}'s formula to $u(s, X^{t,x}(s))$, we can obtain
\begin{theorem}
Let $u\in C^{1,2}([0,T]\times \bar{\mho})$ be a classical solution of (\ref{PDE}) such that for some $C,q>0$, $|\nabla u(t,x)|\leq C(1+|x|^q)$. Then for any $(t,x)\in[0,T]\times\bar{\mho}$, $\left\{\left(u(s, X^{t,x}(s)), \right.\right.$\\
$\left.\left.(\nabla u\sigma)(s,X^{t,x}(s)), u(s, X^{t,x}(s-)+\gamma(s,X^{t,x}(s-),\cdot))-u(s, X^{t,x}(s-))\right), t\leq s\leq T\right\}$ is the solution of the GBSDE with jumps (\ref{VGBSDE}).
\end{theorem}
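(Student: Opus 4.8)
The plan is to apply It\^{o}'s formula for jump-diffusion processes to the composition of the smooth deterministic field $u$ with the reflected process $X^{t,x}$, and then to read off (\ref{VGBSDE}) by substituting the PDE in the interior and the Neumann condition on the boundary. Concretely, I set $Y(s)=u(s,X^{t,x}(s))$, $Z(s)=(\nabla u\sigma)(s,X^{t,x}(s))$, and $J(s,e)=u(s,X^{t,x}(s-)+\gamma(s,X^{t,x}(s-),e))-u(s,X^{t,x}(s-))$, and aim to show this triple satisfies the backward equation; uniqueness from Proposition \ref{continuous} then identifies it as \emph{the} solution. This is legitimate because $x+\gamma(s,x,e)\in\bar{\mho}$ guarantees that $u$ is always evaluated inside its domain.

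First I would invoke It\^{o}'s formula, which here is exactly the specialization of Proposition \ref{ItoVentzell} in which the random field is the deterministic $u$ (so that its $\overleftarrow{d}B$- and $dW$-coefficients vanish and $G=\partial_t u$), and the driving semimartingale is $\alpha=X^{t,x}$, whose dynamics from (\ref{RSDE}) have drift $b$, no $\overleftarrow{d}B$ term, diffusion coefficient $\sigma$, jump coefficient $\gamma$, and reflection coefficient $\nabla\phi$ against $dA^{t,x}$. Collecting the $dr$ terms produced by the formula, namely $\partial_r u+\nabla u\cdot b+\frac{1}{2}\mathrm{tr}\{\sigma\sigma^*\nabla^2u\}$ together with the compensator of the jump term, reconstitutes exactly $\partial_r u+\mathcal{L}u$, while the Brownian integral is $\int(\nabla u\sigma)\,dW=\int Z\,dW$ and the reflection integral is $\int\nabla u\cdot\nabla\phi\,dA^{t,x}$.

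The delicate step is the jump bookkeeping. The formula contributes both $\int\!\int_{\mathcal E}\nabla u\cdot\gamma\,\tilde N(dr,de)$ from the first-order term and $\int\!\int_{\mathcal E}\{u(r,X+\gamma)-u(r,X)-\nabla u\cdot\gamma\}N(dr,de)$ from the jump correction. Writing $N=\tilde N+\nu\,dr$ and recombining, the two $\nabla u\cdot\gamma$ contributions cancel in the martingale part, leaving precisely $\int\!\int_{\mathcal E}\{u(r,X+\gamma)-u(r,X)\}\tilde N(dr,de)=\int\!\int_{\mathcal E}J(r,e)\tilde N(dr,de)$, while the residual compensator $\int\!\int_{\mathcal E}\{u(r,X+\gamma)-u(r,X)-\nabla u\cdot\gamma\}\nu(de)\,dr$ is exactly the non-local part of $\mathcal{L}$ absorbed into the $dr$ drift above.

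It then remains to substitute (\ref{PDE}). In the interior $\partial_r u+\mathcal{L}u=-f(r,X,u,\nabla u\sigma,\mathcal{A}u)$, and since $\mathcal{A}u(r,X)=\int_{\mathcal E}\{u(r,X+\gamma)-u(r,X)\}\nu(de)=\int_{\mathcal E}J(r,e)\nu(de)$ this turns the $dr$ drift into $-f(r,X,Y,Z,\int_{\mathcal E}J\,\nu(de))$; continuity of $u$ and its derivatives on the compact $\bar{\mho}$ extends the identity to the closure. For the reflection term, the occupation property $A^{t,x}(s)=\int\mathbf{1}_{\{X^{t,x}\in\partial\mho\}}\,dA^{t,x}$ confines $dA^{t,x}$ to $\partial\mho$, where $\nabla u\cdot\nabla\phi=\frac{\partial u}{\partial\mathbf{n}}=-h(r,X,u)$, so $\int\nabla u\cdot\nabla\phi\,dA^{t,x}=-\int h(r,X,Y)\,dA^{t,x}$. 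Integrating from $s$ to $T$ with the terminal value $u(T,\cdot)=l$ yields (\ref{VGBSDE}), and membership in $V^2$ follows because $u,\nabla u$ are bounded on the compact $[0,T]\times\bar{\mho}$, the estimate $|J(r,e)|\le\|\nabla u\|_\infty|\gamma(r,X,e)|$ controls $\int_{\mathcal E}J^2\nu(de)$ through the growth hypothesis on $\gamma$, and the moment bounds of Proposition \ref{pro3} handle the rest. I expect the main obstacle to be exactly this jump accounting, making the linear $\nabla u\cdot\gamma$ pieces cancel so that the compensator reproduces the integral term of $\mathcal{L}$ and the surviving $\tilde N$-integrand is precisely $J$, together with the justification of the boundary substitution through the support of $dA^{t,x}$.
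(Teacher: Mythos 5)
Your proposal is correct and is exactly the argument the paper intends: the paper offers no details beyond ``applying It\^{o}'s formula to $u(s,X^{t,x}(s))$,'' and your computation --- collecting the drift into $\partial_r u+\mathcal{L}u$, recombining the jump terms so the $\nabla u\cdot\gamma$ pieces cancel and the surviving $\tilde N$-integrand is $J$, substituting the PDE and the Neumann condition on the support of $dA^{t,x}$, and checking $V^2$ membership --- is the standard and correct way to fill it in.
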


We now give the defintion of a viscosity solution of (\ref{PDE}).
\begin{definition}\label{DEF1}
(i) $u\in C([0,T]\times \bar{\mho})$ is called a viscosity subsolution of (\ref{PDE}) if  $u(T,x)\leq l(x)$, for all $x\in\bar{\mho}$ and moreover for any $\varphi\in C^{1,2}([0,T]\times \bar{\mho})$, whenever $(t,x)\in[0,T]\times\bar{\mho}$ is a global maximum point of  $u-\varphi$, then
\begin{align*}
\left\{\begin{array}{l}
\frac{\partial \varphi}{\partial t}(t,x)+\mathcal{L}\varphi(t,x)+f\left(t,x,u(t,x), (\nabla\varphi\sigma)(t,x), \mathcal{A}\varphi(t,x)\right)\geq0, \ \ \ \mbox{if}\ x\in  \mho,\\
\max\left\{\frac{\partial \varphi}{\partial t}(t,x)+\mathcal{L}\varphi(t,x)+f\left(t,x,u(t,x), (\nabla\varphi\sigma)(t,x), \mathcal{A}\varphi(t,x)\right), \right.\\
\ \ \ \ \ \  \ \left.\frac{\partial \varphi}{\partial \mathbf{n}}(t,x)+h(t,x,u(t,x))\right\}\geq0, \ \ \ \mbox{if}\ x\in  \partial\mho.
\end{array}
\right.
\end{align*}
(ii) $u\in C([0,T]\times \bar{\mho})$ is called a viscosity supersolution of (\ref{PDE}) if $u(T,x)\geq l(x)$, for all  $x\in\bar{\mho}$ and moreover for any $\varphi\in C^{1,2}([0,T]\times \bar{\mho})$, whenever $(t,x)\in[0,T]\times\bar{\mho}$ is a global minimum point of  $u-\varphi$, then
\begin{align*}
\left\{\begin{array}{l}
\frac{\partial \varphi}{\partial t}(t,x)+\mathcal{L}\varphi(t,x)+f\left(t,x,u(t,x), (\nabla\varphi\sigma)(t,x), \mathcal{A}\varphi(t,x)\right)\leq0, \ \ \ \mbox{if}\ x\in  \mho,\\
\min\left\{\frac{\partial \varphi}{\partial t}(t,x)+\mathcal{L}\varphi(t,x)+f\left(t,x,u(t,x), (\nabla\varphi\sigma)(t,x), \mathcal{A}\varphi(t,x)\right), \right.\\
\ \ \ \ \ \  \ \left.\frac{\partial \varphi}{\partial \mathbf{n}}(t,x)+h(t,x,u(t,x))\right\}\leq0, \ \ \ \mbox{if}\ x\in  \partial\mho.
\end{array}
\right.
\end{align*}
(iii) $u\in C([0,T]\times \bar{\mho})$ is called a viscosity solution of (\ref{PDE}) if it is both a viscosity subsolution and a supersolution
of (\ref{PDE}).
\end{definition}

We prove that the GBSDE with jumps (\ref{VGBSDE}) provides a viscosity solution of (\ref{PDE}).
\begin{theorem}\label{VGBSDET}
Define $u(t,x):=Y^{t,x}(t), (t,x)\in [0,T]\times\bar{\mho}$, then it is a viscosity solution of (\ref{PDE}).
\end{theorem}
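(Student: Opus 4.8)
The plan is to verify separately that $u(t,x)=Y^{t,x}(t)$ is a viscosity subsolution and a viscosity supersolution of (\ref{PDE}); since the two arguments are mirror images, I describe the subsolution case. Continuity of $u$ is already furnished by Proposition \ref{continuous}, and the terminal condition is immediate: $u(T,x)=Y^{T,x}(T)=l(X^{T,x}(T))=l(x)$ because $X^{T,x}(T)=x$. The backbone of the whole argument is the flow (Markov) identity
\[
Y^{t,x}(s)=u(s,X^{t,x}(s)),\qquad t\le s\le T,\ \text{a.s.},
\]
which I would derive from the uniqueness part of Proposition \ref{continuous} together with the Markov property of the reflected jump-diffusion $(X^{t,x},A^{t,x})$ in Proposition \ref{pro3}: for $t\le s$, the pair restarted at $(s,X^{t,x}(s))$ again solves the reflected SDE (\ref{RSDE}), and uniqueness of the GBSDE with jumps forces $Y^{t,x}(s)=Y^{s,X^{t,x}(s)}(s)=u(s,X^{t,x}(s))$.

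Fix $\varphi\in C^{1,2}([0,T]\times\bar{\mho})$ and a global maximum point $(t_0,x_0)$ of $u-\varphi$; adding a constant to $\varphi$ (which leaves all derivatives unchanged) I normalize so that $u(t_0,x_0)=\varphi(t_0,x_0)$ and $u\le\varphi$ everywhere. Arguing by contradiction, I suppose the subsolution inequality fails at $(t_0,x_0)$. When $x_0\in\mho$, I pick $\rho>0$ with $\overline{B(x_0,\rho)}\subset\mho$ and set $\tau=(t_0+\delta)\wedge\inf\{s>t_0:X^{t_0,x_0}(s)\notin B(x_0,\rho)\}$; on $[t_0,\tau]$ the process never meets $\partial\mho$, so $dA^{t_0,x_0}\equiv0$ and no reflection occurs. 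Applying the jump-diffusion It\^{o} formula to $\varphi(s,X^{t_0,x_0}(s))$ on $[t_0,\tau]$ exhibits $\varphi(\cdot,X^{t_0,x_0}(\cdot))$ as the solution of a GBSDE with jumps whose driver is $-(\partial_s\varphi+\mathcal{L}\varphi)$, whose diffusion coefficient is $(\nabla\varphi\,\sigma)$, and whose jump coefficient is $\varphi(\cdot,X_-+\gamma)-\varphi(\cdot,X_-)$, so that its nonlocal average is exactly $\mathcal{A}\varphi$, with terminal value $\varphi(\tau,X^{t_0,x_0}(\tau))$. By continuity, shrinking $\delta$ and $\rho$ the strict failure propagates to a whole neighbourhood, giving $-(\partial_s\varphi+\mathcal{L}\varphi)(s,X)>f(s,X,\varphi(s,X),(\nabla\varphi\,\sigma)(s,X),\mathcal{A}\varphi(s,X))$ on $[t_0,\tau]$, while $Y^{t_0,x_0}(\cdot)=u(\cdot,X)$ solves the original GBSDE with driver $f$ and terminal $u(\tau,X(\tau))\le\varphi(\tau,X(\tau))$. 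Comparing the two solutions via Proposition \ref{comparison}, whose jump hypothesis is supplied by the monotonicity assumption (A4.2), yields $\varphi(t_0,x_0)>u(t_0,x_0)$, contradicting the normalization.

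For the boundary case $x_0\in\partial\mho$, I suppose that \emph{both} quantities inside the $\max$ are strictly negative. The same It\^{o} expansion of $\varphi(s,X^{t_0,x_0}(s))$ now retains the reflection integral $\int(\nabla\phi\cdot\nabla\varphi)(s,X)\,dA^{t_0,x_0}=\int\frac{\partial\varphi}{\partial\mathbf{n}}(s,X)\,dA^{t_0,x_0}$, which only charges the times when $X\in\partial\mho$. Exploiting the strict negativity of $\frac{\partial\varphi}{\partial\mathbf{n}}+h$ on the boundary together with the strict negativity of $\partial_s\varphi+\mathcal{L}\varphi+f$ in the interior, I arrange (on a short random interval, after localization) that the driver plus the $dA^{t_0,x_0}$-contribution of the BSDE for $\varphi(\cdot,X)$ strictly dominates those of the BSDE for $u(\cdot,X)=Y^{t_0,x_0}$; a comparison that simultaneously tracks the $dr$ and $dA^{t_0,x_0}$ integrals, in the spirit of Proposition \ref{comparison} and Pardoux and Zhang \cite{PardouxZhang1998}, once more forces $\varphi(t_0,x_0)>u(t_0,x_0)$, a contradiction. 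The supersolution property is obtained by the mirror-image argument at a global minimum of $u-\varphi$ with all inequalities reversed.

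The step I expect to be the main obstacle is the boundary case: one must control the interior integro-differential residual and the Neumann residual carried by the reflection measure $dA^{t_0,x_0}$ at the same time, and keep the comparison argument valid when jumps can transport the process between $\mho$ and $\partial\mho$. It is precisely here that the monotonicity hypothesis (A4.2) on the nonlocal slot of $f$ and the structural assumption $x+\gamma(t,x,e)\in\bar{\mho}$ become indispensable for invoking the comparison theorem with jumps.
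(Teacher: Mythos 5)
Your proposal is correct in substance but reaches the contradiction by a genuinely different mechanism than the paper. The paper fixes a deterministic horizon $t+\epsilon$, compares two GBSDEs with the \emph{same} driver $f$ but terminal values $u(\tau,X^{t,x}(\tau))\le\varphi(\tau,X^{t,x}(\tau))$ (this is where Proposition \ref{comparison}, resp.\ Proposition 2.6 of \cite{Barles1997}, is actually used) to get $\hat{Y}^\tau(t)\ge0$, and then closes the argument with the quantitative small-time estimates $E[\sup_s|\hat{Y}^\tau(s)|^2]\le C\epsilon$ and $\frac{1}{\epsilon}E[\int_t^{t+\epsilon}(|\hat{Z}^\tau|^2+\|\hat{J}^\tau\|_\nu^2)ds]\le C\sqrt{\epsilon}$, so that $|\frac{1}{\epsilon}\hat{Y}^\tau(t)-\Delta_\epsilon|\le C\epsilon^{1/4}$ contradicts $\Delta_\epsilon\le-\delta$ as $\epsilon\downarrow0$. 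You instead localize by a stopping time, propagate the strict negation of the residual to a neighbourhood, and run a single \emph{strict} comparison between the It\^{o} expansion of $\varphi(\cdot,X)$ and $Y^{t_0,x_0}$, whose drivers are \emph{different} but ordered along the test-function trajectory $(\varphi,\nabla\varphi\sigma,\varphi(\cdot,X_-+\gamma)-\varphi(\cdot,X_-))$. That ordering is exactly the right one for the standard linearization: writing $f_A-f_B(\Theta_B)=[f_A-f_B(\Theta_A)]+[f_B(\Theta_A)-f_B(\Theta_B)]$ turns the difference into a linear GBSDE with a strictly positive source and a jump coefficient $c(s)\ge0>-1$ thanks to (A4.2), so the Dol\'eans-Dade representation gives $\varphi(t_0,x_0)-u(t_0,x_0)>0$, the desired contradiction. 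What your route buys is the elimination of the $\epsilon^{1/4}$ rate estimates; what it costs is that Proposition \ref{comparison} as stated (same driver, ordered terminal data) does not literally cover the different-driver, strict, $dA$-weighted comparison you invoke --- you need the linearization/comparison machinery of Section \ref{sec3} (Lemma \ref{Co} and the theorem following it, extended to include the $h\,dA$ term for the boundary case), so you should cite and verify that statement rather than Proposition \ref{comparison}. You also make explicit the Markov flow identity $Y^{t,x}(s)=u(s,X^{t,x}(s))$, which the paper uses silently when it writes the terminal value of the stopped equation as $u(\tau,X^{t,x}(\tau))$; your derivation from pathwise uniqueness of (\ref{RSDE}) and uniqueness of the GBSDE is the standard justification. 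With the comparison lemma stated in the form you actually need, your argument is complete and covers both the interior and the boundary cases.
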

\begin{proof}
It follows from Proposition \ref{continuous} that the function $u\in C([0,T]\times\bar{\mho})$. We shall only prove that $u$ is a viscosity subsolution. The proof of being a viscosity supersolution can be performed similarly. Apparently $u(T,x)=l(x)$. Let $\varphi\in C^{1,2}([0,T]\times\mathbb{R}^n)$, $(t,x)\in[0,T]\times\bar{\mho}$ such that $\varphi\geq u$ and $u(t,x)=\varphi(t,x)$.

We first consider the case $x\in\mho$.
Choose $0<\epsilon\leq T-t$ such that  $\{y: |y-x|\leq\epsilon\}\subset\mho$. Define
$$\tau=\inf\{s\geq t: |X^{t,x}(s)-x|\geq\epsilon\}\wedge (t+\epsilon).$$
For $t\leq s\leq t+\epsilon, e\in\mathcal{E}$, let
$$(Y^\tau(s), Z^\tau(s), J^\tau(s,e))=(Y^{t,x}(s\wedge \tau), \mathbf{1}_{[0,\tau]}(s)Z^{t,x}(s), \mathbf{1}_{[0,\tau]}(s)J^{t,x}(s,e)).$$
Then $(Y^\tau, Z^\tau, J^\tau)$ solves the following BSDE with jumps:
\begin{align*}
Y^\tau(s)=&u(\tau, X^{t,x}(\tau))+\int_s^{t+\epsilon}\mathbf{1}_{[0,\tau]}(r)f\left(r, X^{t,x}(r), Y^\tau(r), Z^\tau(r), \int_\mathcal{E}J^\tau(r,e)\nu(de)\right)dr\\
&-\int_s^{t+\epsilon}Z^\tau(r)dW(r)-\int_s^{t+\epsilon}\int_\mathcal{E}J^\tau(r,e)\tilde{N}(dr,de).
\end{align*}
Consider the following BSDE with jumps:
\begin{align*}
\bar{Y}^\tau(s)=&\varphi(\tau, X^{t,x}(\tau))+\int_s^{t+\epsilon}\mathbf{1}_{[0,\tau]}(r)f\left(r, X^{t,x}(r), \bar{Y}^\tau(r), \bar{Z}^\tau(r), \int_\mathcal{E}\bar{J}^\tau(r,e)\nu(de)\right)dr\\
&-\int_s^{t+\epsilon}\bar{Z}^\tau(r)dW(r)-\int_s^{t+\epsilon}\int_\mathcal{E}\bar{J}^\tau(r,e)\tilde{N}(dr,de).
\end{align*}
From $u\leq\varphi$, we derive with the help of the comparison theorem (see Proposition 2.6 in \cite{Barles1997}) that
\begin{align}\label{comp1}
\bar{Y}^\tau(t)\geq Y^\tau(t)=u(t,x)=\varphi(t,x).
\end{align}
On the other hand, by It\^{o}'s formula, we get
\begin{align*}
\varphi(s\wedge\tau, &X^{t,x}(s\wedge\tau))=\varphi(\tau, X^{t,x}(\tau))-\int_s^{t+\epsilon}\mathbf{1}_{[0,\tau]}(r)\left(\frac{\partial\varphi}{\partial r}+\mathcal{L}\varphi\right)(r, X^{t,x}(r))dr\\
&-\int_s^{t+\epsilon}\mathbf{1}_{[0,\tau]}(r)(\nabla\varphi\sigma)(r, X^{t,x}(r))dW(r)\\
&-\int_s^{t+\epsilon}\int_\mathcal{E}\mathbf{1}_{[0,\tau]}(r)[\varphi(r, X^{t,x}(r-)+\gamma(r, X^{t,x}(r-),e))- \varphi(r, X^{t,x}(r-))] \tilde{N}(dr,de).
\end{align*}
For $t\leq s\leq t+\epsilon$, $e\in\mathcal{E}$, we define
\begin{align*}
\hat{Y}^\tau(s)&=\bar{Y}^\tau(s)-\varphi(s\wedge\tau, X^{t,x}(s\wedge\tau)), \\
\hat{Z}^\tau(s)&=\bar{Z}^\tau(s)-\mathbf{1}_{[0,\tau]}(s)(\nabla\varphi\sigma)(s, X^{t,x}(s)), \\
\hat{J}^\tau(s,e)&=\bar{J}^\tau(s,e)-\mathbf{1}_{[0,\tau]}(s)[\varphi(s, X^{t,x}(s-)+\gamma(s, X^{t,x}(s-),e))- \varphi(s, X^{t,x}(s-))].
\end{align*}
Then, $(\hat{Y}^\tau, \hat{Z}^\tau, \hat{J}^\tau)$ solves the following BSDE with jumps:
\begin{align*}
\hat{Y}^\tau(s)=&\int_s^{t+\epsilon}\mathbf{1}_{[0,\tau]}(r)\left[\left(\frac{\partial\varphi}{\partial r}+\mathcal{L}\varphi\right)(r, X^{t,x}(r))+f\left(r, X^{t,x}(r), \bar{Y}^\tau(r), \bar{Z}^\tau(r), \int_\mathcal{E}\bar{J}^\tau(r,e)\nu(de)\right)\right]dr\\
&-\int_s^{t+\epsilon}\hat{Z}^\tau(r)dW(r)-\int_s^{t+\epsilon}\int_\mathcal{E}\hat{J}^\tau(r,e)\tilde{N}(dr,de).
\end{align*}
By using standard estimation method (see, e.g., \cite{Barles1997}), we obtain for some $C>0$,
\begin{align*}
E\left[\sup_{t\leq s\leq t+\epsilon}|\hat{Y}^\tau(s)|^2\right]&\leq C\epsilon,
\end{align*}
and
\begin{align*}
\frac{1}{\epsilon}\left(E\left[\int_t^{t+\epsilon}|Z^\tau(s)|^2ds\right]+E\left[\int_t^{t+\epsilon}\|J^\tau(s,\cdot)\|_\nu^2ds\right]\right)&\leq C\sqrt{\epsilon}.
\end{align*}
We now suppose that
\begin{align*}
\left(\frac{\partial \varphi}{\partial t}+\mathcal{L}\varphi\right)(t,x)+f\left(t,x,u(t,x), (\nabla\varphi\sigma)(t,x), \mathcal{A}\varphi(t,x)\right)<0,
\end{align*}
and we will find a contradiction.

In fact, there exist some $\epsilon_0>0$ and some $\delta>0$ such that for all $0<\epsilon\leq\epsilon_0$,
\begin{align*}
\Delta_\epsilon:=&\frac{1}{\epsilon}E\left[\int_t^{t+\epsilon}\mathbf{1}_{[0,\tau]}(r)\left\{\left(\frac{\partial\varphi}{\partial r}+\mathcal{L}\varphi\right)(r,X^{t,x}(r))\right.\right.\\
&\left.\left.+f\left(r,X^{t,x}(r),\varphi(r,X^{t,x}(r)), (\nabla \varphi\sigma)(r,X^{t,x}(r)), \mathcal{A}\varphi(r,X^{t,x}(r))\right)\right\}dr\right]\leq-\delta.
\end{align*}
From (\ref{comp1}), we have that $\hat{Y}^\tau(t)\geq0$. So that
\begin{align*}
\frac{1}{\epsilon}\hat{Y}^\tau(t)=&\frac{1}{\epsilon}E\left[\int_t^{t+\epsilon}\mathbf{1}_{[0,\tau]}(r)\left\{\left(\frac{\partial\varphi}{\partial r}+\mathcal{L}\varphi\right)(r, X^{t,x}(r))+f\left(r, X^{t,x}(r), \hat{Y}^\tau(r)+\varphi(r,X^{t,x}(r)), \right.\right.\right.\\
&\left.\left.\left.\hat{Z}^\tau(r)+(\nabla \varphi\sigma)(r,X^{t,x}(r)), \int_\mathcal{E}\hat{J}^\tau(r,e)\nu(de)+\mathcal{A}\varphi(r,X^{t,x}(r))\right)\right\}dr\right]\geq0.
\end{align*}
Hence, for all $0<\epsilon\leq\epsilon_0$, we get
\begin{align*}
0&<\delta\leq\Big|\frac{1}{\epsilon}\hat{Y}^\tau(t)-\Delta_\epsilon\Big|\\
&\leq C \left\{\left(E\left[\sup_{t\leq s\leq t+\epsilon}|\hat{Y}^\tau(s)|^2\right]\right)^{1/2}+\left(\frac{1}{\epsilon}E\left[\int_t^{t+\epsilon}|Z^\tau(s)|^2ds\right]\right)^{1/2}\right.\\
&\ \ \ \ \ \ \ \ \ \ \ \  \ \left.+\left(\frac{1}{\epsilon}E\left[\int_t^{t+\epsilon}\|J^\tau(s,\cdot)\|_\nu^2ds\right]\right)^{1/2}\right\}\\
&\leq C\epsilon^{1/4},
\end{align*}
which is a contradiction.

We now consider the case $x\in\partial\mho$.  Let  $0<\epsilon\leq T-t$.
For $t\leq s\leq t+\epsilon$, we have
\begin{align*}
Y^{t,x}(s)=&u(t+\epsilon, X^{t,x}(t+\epsilon))+\int_s^{t+\epsilon}f\left(r, X^{t,x}(r), Y^{t,x}(r), Z^{t,x}(r), \int_\mathcal{E}J^{t,x}(r,e)\nu(de)\right)dr\\
&-\int_s^{t+\epsilon}Z^{t,x}(r)dW(r)-\int_s^{t+\epsilon}\int_\mathcal{E}J^{t,x}(r,e)\tilde{N}(dr,de)\\
&+\int_s^{t+\epsilon}h(r, X^{t,x}(r), Y^{t,x}(r))dA^{t,x}(r).
\end{align*}
Consider the following BSDE with jumps
\begin{align*}
\bar{Y}^{t,x}(s)=&\varphi(t+\epsilon, X^{t,x}(t+\epsilon))+\int_s^{t+\epsilon}f\left(r, X^{t,x}(r), \bar{Y}^{t,x}(r), \bar{Z}^{t,x}(r), \int_\mathcal{E}\bar{J}^{t,x}(r,e)\nu(de)\right)dr\\
&-\int_s^{t+\epsilon}\bar{Z}^{t,x}(r)dW(r)-\int_s^{t+\epsilon}\int_\mathcal{E}\bar{J}^{t,x}(r,e)\tilde{N}(dr,de)\\
& +\int_s^{t+\epsilon}h(r, X^{t,x}(r), \bar{Y}^{t,x}(r))dA^{t,x}(r), \ \ \ t\leq s\leq t+\epsilon.
\end{align*}
Taking into account that $u\leq\varphi$, we derive from the Proposition \ref{comparison} that
\begin{align}\label{comp2}
\bar{Y}^{t,x}(t)\geq Y^{t,x}(t)=u(t,x)=\varphi(t,x).
\end{align}
On the other hand, from It\^{o}'s formula, we obtain
\begin{align*}
\varphi(s, &X^{t,x}(s))=\varphi(t+\epsilon, X^{t,x}(t+\epsilon))-\int_s^{t+\epsilon}\left(\frac{\partial\varphi}{\partial r}+\mathcal{L}\varphi\right)(r, X^{t,x}(r))dr\\
&-\int_s^{t+\epsilon}(\nabla\varphi\sigma)(r, X^{t,x}(r))dW(r)-\int_s^{t+\epsilon}\frac{\partial\varphi}{\partial \mathbf{n}}(r, X^{t,x}(r))dA^{t,x}(r)\\
&-\int_s^{t+\epsilon}\int_\mathcal{E}[\varphi(r, X^{t,x}(r-)+\gamma(r, X^{t,x}(r-),e))- \varphi(r, X^{t,x}(r-))] \tilde{N}(dr,de).
\end{align*}
For $t\leq s\leq t+\epsilon$, $e\in\mathcal{E}$, we define
\begin{align*}
\hat{Y}^{t,x}(s)&=\bar{Y}^{t,x}(s)-\varphi(s, X^{t,x}(s)), \\
\hat{Z}^{t,x}(s)&=\bar{Z}^{t,x}(s)-(\nabla\varphi\sigma)(s, X^{t,x}(s)), \\
\hat{J}^{t,x}(s,e)&=\bar{J}^{t,x}(s,e)-[\varphi(s, X^{t,x}(s-)+\gamma(s, X^{t,x}(s-),e))- \varphi(s, X^{t,x}(s-))].
\end{align*}
Then, $(\hat{Y}, \hat{Z}, \hat{J})$ solves the following GBSDE with jumps:
\begin{align*}
\hat{Y}^{t,x}(s)=&\int_s^{t+\epsilon}\left[\left(\frac{\partial\varphi}{\partial r}+\mathcal{L}\varphi\right)(r, X^{t,x}(r))+f\left(r, X^{t,x}(r), \bar{Y}^{t,x}(r), \bar{Z}^{t,x}(r), \int_\mathcal{E}\bar{J}^{t,x}(r,e)\nu(de)\right)\right]dr\\
&-\int_s^{t+\epsilon}\hat{Z}^{t,x}(r)dW(r)-\int_s^{t+\epsilon}\int_\mathcal{E}\hat{J}^{t,x}(r,e)\tilde{N}(dr,de)\\
&+\int_s^{t+\epsilon}\left[h(r, X^{t,x}(r), \bar{Y}^{t,x}(r))+\frac{\partial\varphi}{\partial \mathbf{n}}(r, X^{t,x}(r))\right]dA^{t,x}(r).
\end{align*}
By using standard estimation techniques, we have for some $C>0$,
\begin{align*}
E\left[\sup_{t\leq s\leq t+\epsilon}|\hat{Y}^{t,x}(s)|^2\right]&\leq C\epsilon,
\end{align*}
and
\begin{align*}
\frac{1}{\epsilon}\left(E\left[\int_t^{t+\epsilon}|\hat{Z}^{t,x}(s)|^2ds\right]+E\left[\int_t^{t+\epsilon}\|\hat{J}^{t,x}(s,\cdot)\|_\nu^2ds\right]\right)&\leq C\sqrt{\epsilon}.
\end{align*}
We now assume that
\begin{align*}
&\max\left\{\left(\frac{\partial \varphi}{\partial t}+\mathcal{L}\varphi\right)(t,x)+f\left(t,x,u(t,x), (\nabla\varphi\sigma)(t,x), \mathcal{A}\varphi(t,x)\right),\right.\\
& \ \ \ \ \ \ \ \ \left.h(t, x, u(t,x))+\frac{\partial\varphi}{\partial \mathbf{n}}(t, x)\right\}<0,
\end{align*}
and we will seek out a contradiction.

Indeed, there exist some $\epsilon_0>0$ and some $\delta>0$ such that for all $0<\epsilon\leq\epsilon_0$,
\begin{align*}
\Delta_\epsilon:=&\frac{1}{\epsilon}E\left[\int_t^{t+\epsilon}\left\{\left(\frac{\partial\varphi}{\partial r}+\mathcal{L}\varphi\right)(r,X^{t,x}(r))\right.\right.\\
&\left.+f\left(r,X^{t,x}(r),\varphi(r,X^{t,x}(r)), (\nabla \varphi\sigma)(r,X^{t,x}(r)), \mathcal{A}\varphi(r,X^{t,x}(r))\right)\right\}dr\\
&\left.+\int_t^{t+\epsilon}\left\{ h(r, X^{t,x}(r), \varphi(r,X^{t,x}(r)))+\frac{\partial\varphi}{\partial \mathbf{n}}(r, X^{t,x}(r))\right\}dA^{t,x}(r)\right]\leq-\delta.
\end{align*}
It follows from (\ref{comp2}) that $\hat{Y}^{t,x}(t)\geq0$. Hence
\begin{align*}
\frac{1}{\epsilon}\hat{Y}^{t,x}(t)=&\frac{1}{\epsilon}E\left[\int_t^{t+\epsilon}\left\{\left(\frac{\partial\varphi}{\partial r}+\mathcal{L}\varphi\right)(r, X^{t,x}(r))+f\left(r, X^{t,x}(r), \hat{Y}^{t,x}(r)+\varphi(r,X^{t,x}(r)), \right.\right.\right.\\
&\left.\left.\hat{Z}^{t,x}(r)+(\nabla \varphi\sigma)(r,X^{t,x}(r)), \int_\mathcal{E}\hat{J}^{t,x}(r,e)\nu(de)+\mathcal{A}\varphi(r,X^{t,x}(r))\right)\right\}dr\\
&\left.+\int_t^{t+\epsilon}\left\{ h(r, X^{t,x}(r), \hat{Y}^{t,x}(r)+\varphi(r,X^{t,x}(r)))+\frac{\partial\varphi}{\partial \mathbf{n}}(r, X^{t,x}(r))\right\}dA^{t,x}(r)\right]\geq0.
\end{align*}
Therefore, for all $0<\epsilon\leq\epsilon_0$, we have
\begin{align*}
0&<\delta\leq\Big|\frac{1}{\epsilon}\hat{Y}^{t,x}(t)-\Delta_\epsilon\Big|\\
&\leq C \left\{\left(E\left[\sup_{t\leq s\leq t+\epsilon}|\hat{Y}^{t,x}(s)|^2\right]\right)^{1/2}+\left(\frac{1}{\epsilon}E\left[\int_t^{t+\epsilon}|Z^\tau(s)|^2ds\right]\right)^{1/2}\right.\\
&\ \ \ \ \ \ \ \ \ \ \left.+\left(\frac{1}{\epsilon}E\left[\int_t^{t+\epsilon}\|J^{t,x}(s,\cdot)\|_\nu^2ds\right]\right)^{1/2}\right\}\\
&\leq C\epsilon^{1/4},
\end{align*}
which is impossible. This completes the proof.
\end{proof}

\section{Stochastic viscosity solutions to SIPDEs with nonlinear Neumann boundary conditions}\label{sec5}
In this section, we first introduce the notion of stochastic viscosity solutions to semi-linear SIPDEs with nonlinear Neumann boundary conditions. Then we give a probabilistic representation for the stochastic viscosity solutions. We consider the following SIPDE with nonlinear Neumann boundary condition:
\begin{align}\label{SPDE}
(f,g,h) \ \left\{\begin{array}{l}
du(t,x)+\left[\mathcal{L}u(t,x)+f\left(t,x,u(t,x), (\nabla u\sigma)(t,x), \mathcal{A}u(t,x)\right)\right]dt\\
 \  + g(t,x)\overleftarrow{d}B(t)=0, \ \ \ (t,x)\in [0,T]\times \mho,\\
\frac{\partial u}{\partial \mathbf{n}}(t,x)+h(t,x,u(t,x))=0, \ \ \ (t,x)\in [0,T]\times \partial\mho,\\
u(T,x)=l(x), \ \ \ x\in \bar{\mho},
\end{array}
\right.
\end{align}
where
\begin{align*}
&l: \bar{\mho}\to \mathbb{R},\ \ \
f: \Omega\times[0, T]\times \bar{\mho} \times \mathbb{R}^d \times \mathbb{R} \to \mathbb{R}, \\
&g: \Omega\times[0, T]\times \bar{\mho}  \to \mathbb{R}^m, \ \ \ h: \Omega\times[0, T]\times \bar{\mho}\to \mathbb{R}
\end{align*}
are continuous and satisfy the assumptions (A4.1)-(A4.2) for almost all $\omega\in\Omega$. Furthermore, we assume that\\
(A5.1) for some $C>0$, all $t\in[0,T]$ and $x, x'\in\bar{\mho}$:
\begin{align*}
& |g(t, x)|\leq C(1+|x|),\\
&|g(t, x)- g(t,x')|\leq C(|x- x'|).
\end{align*}
(A5.2) The function $g\in C_b^{0,2}([0,T]\times\bar{\mho}; \mathbb{R}^m)$.

For any $(t,x)\in [0,T]\times \bar{\mho}$, let $\{(Y^{t,x}(s),  Z^{t,x}(s),  J^{t,x}(s,\cdot)), t\leq s\leq T\}$ be the unique solution of the GBDSDE with jumps:
\begin{align}\label{VGBDSDE}
Y^{t,x}(s)=&l(X^{t,x}(T))+\int_s^Tf\left(r, X^{t,x}(r), Y^{t,x}(r), Z^{t,x}(r), \int_{\mathcal{E}}J^{t,x}(r,e)\nu(de)\right)dr\notag\\
&+\int_s^Tg(r, X^{t,x}(r))\overleftarrow{d}B(r)-\int_s^TZ^{t,x}(r)dW(r)-\int_s^T\int_{\mathcal{E}}J^{t,x}(r,e)\tilde{N}(dr,de)\notag\\
&+\int_s^Th(r,X^{t,x}(r), Y^{t,x}(r))dA^{t,x}(r), \ \ \ t\leq s\leq T,
\end{align}
where the process $X$ is the unique solution of \ref{RSDE}.

\begin{proposition}\label{RFC}
The random field $(t,x)\mapsto Y^{t,x}(t)$ is almost surely continuous in $(t,x)$.
\end{proposition}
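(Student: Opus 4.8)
The plan is to upgrade the $L^{2}$-argument behind Proposition \ref{continuous} to an $L^{2p}$-estimate for $p$ large and then to invoke Kolmogorov's continuity criterion. The essential difference from Proposition \ref{continuous} is that the backward integral $\int g(r,X^{t,x}(r))\overleftarrow{d}B(r)$ now makes $(t,x)\mapsto Y^{t,x}(t)$ genuinely random (it is only $\mathcal{F}_{t,T}^{B}$-measurable), so pointwise continuity of a deterministic map must be replaced by joint regularity of a random field, which is exactly what Kolmogorov's theorem delivers.

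First, fix $(t,x),(t',x')\in[0,T]\times\bar{\mho}$ and, as in Proposition \ref{continuous}, introduce the differences $\bar{Y},\bar{Z},\bar{J},\bar{A},\bar{f},\bar{h}$, together with the new term $\bar{g}(r)=g(r,X^{t,x}(r))-g(r,X^{t',x'}(r))$. I extend all processes to $[0,T]$ by freezing them before the initial time, so that $|Y^{t,x}(t)-Y^{t',x'}(t')|\le\sup_{0\le s\le T}|\bar{Y}(s)|$; by (A5.1) one has $|\bar{g}(r)|\le C|\bar{X}(r)|$. Applying the It\^{o} formula of Proposition \ref{Ito} to $e^{\mu\tilde{A}(s)}|\bar{Y}(s)|^{2p}$, a sufficiently large weight $\mu$ absorbs the $Y$-dependent Lipschitz contributions of $f$ and $h$ and the monotonicity constant $C_{0}$, exactly as in the $L^{2}$ computation; the extra energy term $E[\int e^{\mu\tilde{A}}\bar{g}^{2}\,dr]$ created by the backward integral is dominated by $C\,E[\int e^{\mu\tilde{A}}|\bar{X}|^{2}\,dr]$. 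The three martingale terms---the forward $W$-integral, the compensated jump integral, and the backward integral $\int\bar{Y}\bar{g}\,\overleftarrow{d}B$ (a martingale for the decreasing filtration $\mathbb{F}^{B}$)---are handled by the Burkholder--Davis--Gundy and Young inequalities, the jump-bracket contribution $\int\|\bar{J}\|_{\nu}^{2}\,dr$ being absorbed on the left-hand side.

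After a Gronwall argument this produces
$$
E\Big[\sup_{0\le s\le T}|\bar{Y}(s)|^{2p}\Big]\le C\Big(E\big[|X^{t,x}(T)-X^{t',x'}(T)|^{2p}\big]+E\big[\textstyle\sup_{0\le s\le T}|\bar{X}(s)|^{2p}\big]+E\big[\|\bar{A}\|(T)^{2p}\big]\Big).
$$
I then feed in the flow estimates of Proposition \ref{pro3}, strengthened from the fourth to the $2p$-th moment and extended to record the dependence on the initial time, which bound the right-hand side by a quantity of order $(|t-t'|+|x-x'|)^{p}$ through the standard Lipschitz-flow and reflection (local-time) estimates of Lions--Sznitman and Menaldi--Robin. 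Hence $E[|Y^{t,x}(t)-Y^{t',x'}(t')|^{2p}]\le C(|t-t'|+|x-x'|)^{p}$, and choosing $p>n+1$ makes this exponent strictly exceed the dimension $n+1$ of the parameter set $[0,T]\times\bar{\mho}$. Kolmogorov's continuity criterion then yields an almost surely continuous modification of $(t,x)\mapsto Y^{t,x}(t)$, which is the claim.

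The main obstacle is the higher-moment bookkeeping performed uniformly in the two parameters: one must carry the compensated Poisson term, the singular driving measure $dA^{t,x}$, and the weight $e^{\mu\tilde{A}}$ through the $2p$-th power while keeping every $dA$-contribution on the correct side, as in the $L^{2}$ estimate but now robustly under the large power. A secondary, more technical point is that Proposition \ref{pro3} is stated only for fixed initial time and only at the fourth moment, so its $2p$-th-moment and time-regularity analogues for the reflected flow $(X^{t,x},A^{t,x})$ must be established before the Kolmogorov exponent can be attained.
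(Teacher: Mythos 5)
Your proposal is correct in substance but takes a genuinely different, and in fact more complete, route than the paper: the paper disposes of Proposition \ref{RFC} in one line by declaring the proof ``analogous to the proof of Proposition \ref{continuous}'' and omitting all details, i.e.\ it implicitly reruns the weighted $L^2$ energy estimate. You have correctly spotted why that analogy is not literal: in Proposition \ref{continuous} the quantity $Y^{t,x}(t)$ is deterministic, so the $L^2$ bound on $\sup_s|\bar Y(s)|^2$ converts directly into pointwise continuity of a deterministic map, whereas here $Y^{t,x}(t)$ is only $\mathcal{F}_{t,T}^B$-measurable and an $L^2$ modulus gives merely mean-square continuity, not the almost sure joint continuity asserted. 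Your upgrade to $L^{2p}$ estimates followed by Kolmogorov's continuity criterion is the standard and correct repair (it is essentially how Pardoux--Peng and Buckdahn--Ma establish continuity of the random field in the BDSDE setting), and your treatment of the extra backward integral via (A5.1) and the Burkholder--Davis--Gundy inequality is sound. Two pieces of technical debt remain, both of which you flag honestly: Proposition \ref{pro3} must be strengthened to $2p$-th moments and to joint regularity in the initial time $t$ (including H\"older-in-time estimates for the reflected jump flow and its local time), and --- a gap already latent in the paper's own proof of Proposition \ref{continuous} --- the final estimate involves the total variation $\|\bar A\|(T)$ of the difference of two increasing processes, which is not controlled by the supremum bound $E[\sup_s|A^{t,x}(s)-A^{t,x'}(s)|^{4}]$ that Proposition \ref{pro3} actually provides; one must either establish convergence of $A^{t',x'}\to A^{t,x}$ in total variation or restructure the estimate (as in Pardoux--Zhang) to avoid the total variation altogether. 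With those lemmas supplied, your argument delivers the a.s.\ continuous modification, which is strictly more than the paper's sketch justifies.
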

\begin{proof}
The proof  is analogous to the proof of Proposition \ref{continuous} and we omit the details here.
\end{proof}

  Define
\begin{align*}
\mathcal{R}_{f}\varphi(s,x)=\mathcal{L}\varphi(s,x)+f\left(s,x,\varphi(s,x), (\nabla\varphi\sigma)(s,x), \mathcal{A}\varphi(s,x)\right).
\end{align*}

We now introduce the notion of a stochastic viscosity solution of (\ref{SPDE}).
\begin{definition}\label{DEF2}
(i) A random field $u\in \mathcal{C}(\mathbb{F}^B, [0,T]\times \bar{\mho})$ is called a stochastic viscosity subsolution of (\ref{SPDE}) if $u(T,x)\leq l(x)$, for all $x\in\bar{\mho}$ and moreover for any stopping time $\tau\in \mathcal{T}^B(0,T)$, any $\bar{\mho}$-valued state variable $\zeta \in \mathcal{F}_\tau^B$ and any random field $\varphi\in \mathcal{C}^{1,2}(\mathcal{F}_\tau^B, [0,T]\times \mho)$ such that for almost all $\omega\in\{0<\tau<T\}$,
\begin{align*}
u(\omega,t,x)-\eta(\omega,t,x,\varphi(t,x))\leq0=u(\tau(\omega),\zeta(\omega))-\eta(\tau(\omega),\zeta(\omega),\varphi(\tau(\omega),\zeta(\omega))),
\end{align*}
for all $(t,x)\in[0,T]\times\mho$, then
\begin{align*}
\left\{\begin{array}{l}
\mathcal{R}_{f}\psi(\tau,\zeta)+\frac{\partial \varphi}{\partial t}(\tau,\zeta)\geq0, \ \ \ \mbox{a.s. on}\ \{0<\tau<T\}\cap\{\zeta\in  \mho\},\\
\max\left\{\mathcal{R}_{f}\psi(\tau,\zeta)+\frac{\partial \varphi}{\partial t}(\tau,\zeta), \frac{\partial \psi}{\partial \mathbf{n}}(\tau,\zeta)+h(\tau, \zeta, \psi(\tau,\zeta))\right\}\geq0, \\
\ \ \ \ \ \ \ \ \ \mbox{a.s. on}\ \{0<\tau<T\}\cap\{\zeta\in  \partial\mho\},
\end{array}
\right.
\end{align*}
where $\psi(t,x):=\eta(t,x,\varphi(t,x))$ and the stochastic flow $\eta(t,x,y)$ is defined as the unique solution of the SDE with two spatial parameters:
$$\eta(t,x,y)=y+\int_t^Tg(s,x) \overleftarrow{d}B(s), \ \ \ 0\leq t\leq T.$$
(ii)  A random field $u\in \mathcal{C}(\mathbb{F}^B,[0,T]\times \bar{\mho})$ is called a stochastic viscosity supersolution of (\ref{SPDE}) if $u(T,x)\geq l(x)$, for all $x\in\bar{\mho}$ and moreover for any stopping time $\tau\in \mathcal{T}^B(0,T)$, any $\bar{\mho}$-valued state variable $\zeta \in \mathcal{F}_\tau^B$ and any random field $\varphi\in \mathcal{C}^{1,2}(\mathcal{F}_\tau^B,[0,T]\times \mho)$ such that for $\mathbb{P}$-almost all $\omega\in\{0<\tau<T\}$,
\begin{align*}
u(\omega,t,x)-\eta(\omega,t,x,\varphi(t,x))\geq0=u(\tau(\omega),\zeta(\omega))-\eta(\tau(\omega),\zeta(\omega),\varphi(\tau(\omega),\zeta(\omega))),
\end{align*}
for all $(t,x)\in[0,T]\times\mho$,  then
\begin{align*}
\left\{\begin{array}{l}
\mathcal{R}_{f}\psi(\tau,\zeta)+\frac{\partial \varphi}{\partial t}(\tau,\zeta)\leq0, \ \ \ \mbox{a.s. on}\ \{0<\tau<T\}\cap\{\zeta\in  \mho\},\\
\min\left\{\mathcal{R}_{f}\psi(\tau,\zeta)+\frac{\partial \varphi}{\partial t}(\tau,\zeta), \frac{\partial \psi}{\partial \mathbf{n}}(\tau,\zeta)+h(\tau, \zeta, \psi(\tau,\zeta))\right\}\leq0, \\
\ \ \ \ \ \ \ \ \ \mbox{a.s. on}\ \{0<\tau<T\}\cap\{\zeta\in  \partial\mho\}.
\end{array}
\right.
\end{align*}
(iii) A random field $u\in \mathcal{C}(\mathbb{F}^B,[0,T]\times \bar{\mho})$ is called a stochastic viscosity solution of (\ref{SPDE}) if it is both a stochastic viscosity subsolution and a supersolution
of (\ref{SPDE}).
\end{definition}

\begin{remark}
Under the condition $u(\tau,\zeta)=\eta(\tau,\zeta,\varphi(\tau,\zeta))$, we have
\begin{align*}
\mathcal{R}_{f}\psi(\tau,\zeta)=\mathcal{L}\psi(\tau,\zeta)+f\left(\tau,\zeta,u(\tau,\zeta), (\nabla \psi\sigma)(\tau,\zeta), \mathcal{A}\psi(\tau,\zeta)\right).
\end{align*}
Hence, if $f,h$ are deterministic and $g\equiv0$, the Definition \ref{DEF2} agrees with the Definition \ref{DEF1}.
\end{remark}

Next we prove a key proposition which allows us to provide a stochastic viscosity solution to the SIPDE (f,g,h) (\ref{SPDE}).
\begin{proposition}[Doss-Sussmann transformation]\label{keypro}
A random field $u$ is a stochastic viscosity solution to the SIPDE $(f,g,h)$ (\ref{SPDE}) if and only if $v(t,x)=\varepsilon(t,x,u(t,x))$ is a stochastic viscosity solution to the SIPDE $(\tilde{f},0,\tilde{h})$, where
\begin{align}
\mathcal{\varepsilon}(t,x,y)&=y-\int_t^Tg(s,x) \overleftarrow{d}B(s),\notag\\
\tilde{f}(t,x,y,z,j)&= f\left(t,x,\eta(t,x,y), (\nabla_x\eta\sigma)(t,x,y)+z, \mathcal{A}_x\eta(t,x,y)+j\right)+\mathcal{L}_x\eta(t,x,y),\label{f}\\
\tilde{h}(t,x,y)&=h(t,x,\eta(t,x,y))+\frac{\partial \eta}{\partial \mathbf{n}}(t,x,y),\label{h}
\end{align}
where
\begin{align*}
&\mathcal{L}_x\eta(t,x,y)=b(t,x)\cdot\nabla_x \eta(t,x,y)+\frac{1}{2}\mbox{tr}(\sigma\sigma^*(t,x)\nabla_x^2\eta(t,x,y))\\
&+\int_{\mathcal{E}}\left[\eta(t,x+\gamma(t,x,e),y)-\eta(t,x,y)-\nabla_x \eta(t,x,y)\cdot\gamma(t,x,e)\right]\nu(de),
\end{align*}
and
\begin{align*}
\mathcal{A}_x\eta(t,x,y)=\int_{\mathcal{E}}\left[\eta(t,x+\gamma(t,x,e),y)-\eta(t,x,y)\right]\nu(de).
\end{align*}
\end{proposition}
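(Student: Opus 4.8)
The plan is to exploit the Doss--Sussmann structure of the flow $\eta$. Since $g$ depends only on $(t,x)$ and not on $y$, the map $y\mapsto\eta(t,x,y)$ is affine with unit slope; writing $G(t,x):=\int_t^Tg(s,x)\overleftarrow{d}B(s)$ we have $\eta(t,x,y)=y+G(t,x)$ and $\varepsilon(t,x,y)=y-G(t,x)$, so that $\varepsilon(t,x,\cdot)$ and $\eta(t,x,\cdot)$ are mutually inverse and $v(t,x)=u(t,x)-G(t,x)$, i.e. $u=v+G$. Because the transformed equation carries $\tilde g\equiv0$, its associated flow is trivial, $\tilde\eta(t,x,y)=y$, so in Definition \ref{DEF2} applied to $(\tilde f,0,\tilde h)$ the transformed test field coincides with the bare test field $\varphi$. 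The whole proof then reduces to matching, term by term, the defining inequalities for $u$ against those for $v$ under the correspondence $\psi\leftrightarrow\varphi$ given by $\psi(t,x)=\eta(t,x,\varphi(t,x))=\varphi(t,x)+G(t,x)$.

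First I would establish the transfer of contact points. The key observation is that, $\eta$ being affine with unit slope, $u(\omega,t,x)-\psi(\omega,t,x)=v(\omega,t,x)-\varphi(\omega,t,x)$ identically. Consequently $(\tau,\zeta)$ is a global maximum (resp. minimum) of $u-\psi$ on $[0,T]\times\mho$ with $u(\tau,\zeta)=\psi(\tau,\zeta)$ if and only if it is a global maximum (resp. minimum) of $v-\varphi$ with $v(\tau,\zeta)=\varphi(\tau,\zeta)$. Under (A5.2) the random field $G$ lies in $\mathcal{C}^{0,2}(\mathbb{F}^B,[0,T]\times\bar\mho)$ --- here one invokes Proposition \ref{ItoVentzell} together with a Kolmogorov-continuity argument to differentiate the backward It\^o integral in the spatial variable and to commute $\nabla_x$ with $\overleftarrow{d}B$ --- so $\varphi\mapsto\psi=\varphi+G$ is a bijection of the admissible test classes $\mathcal{C}^{1,2}(\mathcal{F}_\tau^B,[0,T]\times\mho)$ that also preserves membership of $u,v$ in $\mathcal{C}(\mathbb{F}^B,[0,T]\times\bar\mho)$ and the terminal and regularity conditions.

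The heart of the argument is a set of pointwise operator identities obtained from the decomposition $\psi=\varphi+G$ and the definitions of $\mathcal{L}_x\eta$, $\mathcal{A}_x\eta$ and $\tilde f,\tilde h$. Since $\partial_y\eta\equiv1$ one gets $\nabla\psi=\nabla\varphi+\nabla_x\eta(\cdot,\cdot,\varphi)$, hence $(\nabla\psi\sigma)=(\nabla\varphi\sigma)+(\nabla_x\eta\,\sigma)(\cdot,\cdot,\varphi)$; splitting each term of $\mathcal{L}$ and $\mathcal{A}$ into its $\varphi$-part and its $G$-part yields $\mathcal{L}\psi=\mathcal{L}\varphi+\mathcal{L}_x\eta(\cdot,\cdot,\varphi)$, $\mathcal{A}\psi=\mathcal{A}\varphi+\mathcal{A}_x\eta(\cdot,\cdot,\varphi)$ and $\frac{\partial\psi}{\partial\mathbf{n}}=\frac{\partial\varphi}{\partial\mathbf{n}}+\frac{\partial\eta}{\partial\mathbf{n}}(\cdot,\cdot,\varphi)$. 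Substituting these into the definition (\ref{f}) of $\tilde f$ and recalling $\eta(\cdot,\cdot,\varphi)=\psi$ collapses the shifted arguments of $f$ exactly onto $(\psi,(\nabla\psi\sigma),\mathcal{A}\psi)$, so that
\begin{align*}
\mathcal{R}_{\tilde f}\varphi(\tau,\zeta)=\mathcal{L}\varphi+\tilde f(\tau,\zeta,\varphi,(\nabla\varphi\sigma),\mathcal{A}\varphi)=\mathcal{L}\psi+f(\tau,\zeta,\psi,(\nabla\psi\sigma),\mathcal{A}\psi)=\mathcal{R}_f\psi(\tau,\zeta),
\end{align*}
and likewise (\ref{h}) gives $\frac{\partial\varphi}{\partial\mathbf{n}}+\tilde h(\tau,\zeta,\varphi)=\frac{\partial\psi}{\partial\mathbf{n}}+h(\tau,\zeta,\psi)$ on $\partial\mho$. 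Because the time slot in Definition \ref{DEF2} is already written with $\partial_t\varphi$ for both equations, the interior inequality $\mathcal{R}_f\psi+\partial_t\varphi\ge0$ for $u$ is \emph{verbatim} the inequality $\mathcal{R}_{\tilde f}\varphi+\partial_t\varphi\ge0$ for $v$, and the boundary $\max\{\cdots\}\ge0$ conditions coincide as well; the supersolution case follows by reversing the inequalities and replacing $\max$ with $\min$. Combining the sub- and supersolution statements proves the equivalence in both directions.

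The step I expect to be the main obstacle is the regularity bookkeeping for the backward-It\^o random field $G(t,x)=\int_t^Tg(s,x)\overleftarrow{d}B(s)$: one must justify that $G(\cdot,\cdot)$ is genuinely $\mathcal{C}^{0,2}$ in $x$, $\mathbb{P}$-a.s., that spatial derivatives pass under the backward stochastic integral, and that the operators $\mathcal{L}$, $\mathcal{A}$ and $\partial/\partial\mathbf{n}$ may be applied to $\psi=\varphi+G$ pathwise. This is precisely where (A5.2) and the It\^o--Ventzell formula of Proposition \ref{ItoVentzell} are needed; once this regularity is in hand, the algebraic identities above are routine and the equivalence of the two notions of stochastic viscosity solution is immediate. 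I would also remark that the convention of writing $\partial_t\varphi$ (rather than an ill-defined $\partial_t\psi$) in Definition \ref{DEF2} is exactly what allows the time-derivative term to pass through the transformation without further analysis.
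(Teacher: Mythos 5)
Your proposal is correct and follows essentially the same route as the paper's proof: transfer of contact points through the flow $\eta$ (the paper invokes strict monotonicity of $y\mapsto\eta(t,x,y)$ where you use the sharper affine identity $u-\psi=v-\varphi$, which is available here precisely because $g$ does not depend on $u$), followed by the operator identities $\mathcal{L}\psi=\mathcal{L}\varphi+\mathcal{L}_x\eta(\cdot,\cdot,\varphi)$, $\mathcal{A}\psi=\mathcal{A}\varphi+\mathcal{A}_x\eta(\cdot,\cdot,\varphi)$ and the normal-derivative identity, yielding $\mathcal{R}_f\psi=\mathcal{R}_{\tilde f}\varphi$ and the matching boundary condition. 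Your additional remarks on the $\mathcal{C}^{0,2}$ regularity of the backward It\^{o} field and on the bijectivity of the test-function correspondence are consistent with the paper's reliance on (A5.2) and do not change the argument.
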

\begin{proof}
We only give the proof for the stochastic subsolution case. The supersolution part can be proved similarly. We assume that
the random field $u\in \mathcal{C}(\mathbb{F}^B, [0,T]\times \bar{\mho})$ is a stochastic viscosity subsolution of the SIPDE $(f,g,h)$, then
$v\in \mathcal{C}([0,T]\times \bar{\mho}; \mathbb{F}^B)$. Let $\tau\in \mathcal{T}^B(0,T)$, $\zeta\in\mathcal{F}_\tau^B$ and $\varphi\in \mathcal{C}^{1,2}(\mathcal{F}_\tau^B,[0,T]\times\mathbb{R})$ be such that for almost all $\omega\in\{0<\tau<T\}$,
\begin{align*}
v(\omega, t,x)-\varphi(\omega, t,x)\leq 0=v(\tau(\omega), \zeta(\omega))-\varphi(\tau(\omega), \zeta(\omega)),
\end{align*}
for all $(t,x)\in[0,T]\times\mho$.

We note that under the assumption (A5.2) the random field $\eta\in\mathcal{C}^{0,2}(\mathbb{F}^B,[0,T]\times\bar{\mho})$ as well as $\mathcal{\varepsilon}\in\mathcal{C}^{0,2}(\mathbb{F}^B,[0,T]\times\bar{\mho})$. Let $\psi(t,x)=\eta(t,x,\varphi(t,x))$. Since $y\mapsto\eta(t,x,y)$ is strictly increasing, we have
\begin{align*}
u(t,x)-\psi(t,x)&=\eta(t,x,v(t,x))-\eta(t,x,\varphi(t,x))\\
&\leq 0=\eta(\tau,\zeta,v(\tau,\zeta))-\eta(\tau,\zeta,\varphi(\tau,\zeta))=u(\tau,\zeta)-\psi(\tau,\zeta)
\end{align*}
for all $(t,x)\in[0,T]\times\mho$, a.s. on $\{0<\tau<T\}$. Furthermore, since $u$ is a stochastic viscosity subsolution of the SIPDE $(f,g,h)$, we get
\begin{align*}
\mathcal{R}_f\psi(\tau,\zeta)+\frac{\partial\varphi}{\partial t}(\tau,\zeta)\geq0,  \ \ \ \mbox{a.s. on}\  \{0<\tau<T\}.
\end{align*}
Since
\begin{align*}
\mathcal{L}\psi(t,x)
=&b(t,x)\cdot\nabla\psi(t,x)+\frac{1}{2}\mbox{tr}\{\sigma\sigma^*(t,x)\nabla^2\psi(t,x)\}\\
&+\int_{\mathcal{E}}\left\{\psi(t,x+\gamma(t,x,e))-\psi(t,x)-\nabla\psi(t,x)\cdot\gamma(t,x,e)\right\}\nu(de)\\
=&\mathcal{L}_x\eta(t,x,\varphi(t,x))+\mathcal{L}\varphi(t,x)
\end{align*}
and
\begin{align*}
f&\left(t,x,\psi(t,x), (\nabla\psi\sigma)(t,x), \mathcal{A}\psi(t,x)\right)\\
&=f\left(t,x,\eta(t,x, \varphi(t,x)), \left[\nabla_x \eta(t,x, \varphi(t,x))+\nabla\varphi(t,x)\right]\sigma(t,x), \mathcal{A}_x\eta(t,x,\varphi(t,x))+\mathcal{A}\varphi(t,x)\right),
\end{align*}
we obtain
\begin{align*}
\mathcal{R}_f\psi(t,x)=\mathcal{R}_{\tilde{f}}\varphi(t,x).
\end{align*}
So that
\begin{align*}
\mathcal{R}_{\tilde{f}}\varphi(t,x)+\frac{\partial\varphi}{\partial t}(t,x)\geq0, \ \ \ \mbox{a.s. on} \ \{0<\tau<T\}.
\end{align*}
In addition, we notice that for all $(t,x)\in[0,T]\times\partial\mho$,
\begin{align*}
\frac{\partial\psi}{\partial \mathbf{n}}(\tau,\zeta)+h(\tau,\zeta,\psi(\tau,\zeta))=\frac{\partial\varphi}{\partial \mathbf{n}}(\tau,\zeta)+\tilde{h}(\tau,\zeta,\varphi(\tau,\zeta)).
\end{align*}
Hence, we get
\begin{align*}
\max\left\{\mathcal{R}_{\tilde{f}}\varphi(t,x)+\frac{\partial \varphi}{\partial t}(\tau,\zeta), \frac{\partial \varphi}{\partial \mathbf{n}}(\tau,\zeta)+\tilde{h}(\tau, \zeta, \varphi(\tau,\zeta))\right\}\geq0,
\end{align*}
a.s. on $\{0<\tau<T\}\cap\{\zeta\in  \partial\mho\}$. Thus, the random field $v$ is a stochastic viscosity subsolution of the SIPDE $(\tilde{f},0,\tilde{h})$.
\end{proof}

We are in a position to prove the existence of the stochastic viscosity solutions to  semilinear SIPDEs with nonlinear Neumann boundary conditions.
For $t\in[0,T]$ and $x\in\bar{\mho}$, let
\begin{align}
U^{t,x}(s)=&\varepsilon(s,X^{t,x}(s), Y^{t,x}(s)),  \ \ \ 0\leq t\leq s\leq T,\label{U}\\
V^{t,x}(s)=&(\nabla_x \varepsilon\sigma)(s, X^{t,x}(s), Y^{t,x}(s))+Z^{t,x}(s), \ \ \ 0\leq t\leq s\leq T, \label{V}\\
M^{t,x}(s,\cdot)=&\varepsilon(s, X^{t,x}(s-)+\gamma(s, X^{t,x}(s-), \cdot), Y^{t,x}(s))-\varepsilon(s, X^{t,x}(s-), Y^{t,x}(s))\notag\\
&  +J^{t,x}(s,\cdot), \ \ \ 0\leq t\leq s\leq T. \label{M}
\end{align}
Then we have the following result:

\begin{theorem}
For each $(t,x)\in[0,T]\times\bar{\mho}$, the process $\{(U^{t,x}(s), V^{t,x}(s), M^{t,x}(s,\cdot)), t\leq s\leq T\}\in V^2(\mathbb{F},[t,T];\mathbb{R}\times\mathbb{R}^d\times L^2(\mathcal{E},\nu))$ is the unique solution to the following GBSDE with jumps:
\begin{align}\label{GBSDEJ}
U^{t,x}(s)=&l(X^{t,x}(T))+\int_s^T\tilde{f}\left(r, X^{t,x}(r), U^{t,x}(r), V^{t,x}(r), \int_{\mathcal{E}}M^{t,x}(r,e)\nu(de)\right)dr\notag\\
&-\int_s^TV^{t,x}(r)dW(r)-\int_s^T\int_{\mathcal{E}}M^{t,x}(r,e)\tilde{N}(dr,de)\notag\\
&+\int_s^T\tilde{h}(r,X^{t,x}(r), U^{t,x}(r))dA^{t,x}(r), \ \ \ t\leq s\leq T,
\end{align}
where $\tilde{f}$ and $\tilde{h}$ are respectively given by (\ref{f}) and (\ref{h}).
\end{theorem}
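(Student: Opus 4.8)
The plan is to realize $U^{t,x}(\cdot)$ as the image of the semimartingale $\alpha(\cdot):=(X^{t,x}(\cdot),Y^{t,x}(\cdot))$, valued in $\mathbb{R}^{n+1}$, under the random field $\varepsilon(s,\cdot,\cdot)$, and to read off its dynamics from the It\^o--Ventzell formula of Proposition \ref{ItoVentzell}. Viewing $\varepsilon$ as a field in the combined spatial variable $(x,y)$, its own decomposition is read off from $\varepsilon(s,x,y)=y-\int_s^Tg(r,x)\overleftarrow{d}B(r)$: the $ds$- and $dW$-coefficients of the field vanish, while its $\overleftarrow{d}B$-coefficient is $g(s,x)$. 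Here (A5.2) guarantees $\varepsilon\in\mathcal{C}^{0,2}(\mathbb{F}^B,[0,T]\times\bar{\mho})$ so that the formula applies, and the continuity of $A$ assumed throughout this section means $\alpha$ has no predictable jumps. The decomposition of $\alpha$ is assembled from (\ref{RSDE}) and (\ref{VGBDSDE}): its $ds$-drift is $(b,-f)$, its $\overleftarrow{d}B$-coefficient $\gamma_\alpha=(0,-g)$, its $dW$-coefficient $(\sigma,Z)$, its jump amplitude $\theta_\alpha=(\gamma,J)$, and its $dA$-coefficient $(\nabla\phi,-h)$.

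First I would substitute these into Proposition \ref{ItoVentzell} and collect terms noise by noise. The decisive observation is that the two $\overleftarrow{d}B$-contributions, namely $g(s,X)\overleftarrow{d}B$ from the field and $\nabla\varepsilon\cdot\gamma_\alpha\,\overleftarrow{d}B=\partial_y\varepsilon\cdot(-g(s,X))\overleftarrow{d}B=-g(s,X)\overleftarrow{d}B$ from the gradient, cancel exactly; this is the Doss--Sussmann mechanism removing the $B$-noise. Likewise the second-order correction $-\tfrac12\mbox{tr}\{\nabla^2\varepsilon\,\gamma_\alpha\gamma_\alpha^*\}$ vanishes because $\partial_y^2\varepsilon\equiv0$, and $-\mbox{tr}\{\nabla H\,\gamma_\alpha^*\}$ vanishes because $g$ is $y$-independent while $\gamma_\alpha$ is supported only in the $y$-slot. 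The $dW$-coefficient becomes $(\nabla_x\varepsilon\sigma)(s,X,Y)+Z=V^{t,x}$. For the jumps, since $\varepsilon$ is affine in $y$ with $\partial_y\varepsilon\equiv1$, the jump of $\varepsilon(s,\alpha)$ across a Poisson atom is $\varepsilon(s,X(s-)+\gamma,Y(s-))-\varepsilon(s,X(s-),Y(s-))+J=M^{t,x}(s,e)$, so that splitting $\int\nabla\varepsilon\cdot\theta_\alpha\,\tilde N+\int\{\varepsilon(\alpha(s-)+\theta_\alpha)-\varepsilon(\alpha(s-))-\nabla\varepsilon\cdot\theta_\alpha\}N$ yields the compensated part $\int_{\mathcal{E}}M^{t,x}\tilde N(ds,de)$ plus a compensator drift equal to the nonlocal part of $\mathcal{L}_x\varepsilon$.

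Next I would assemble the $ds$-drift and $dA$-coefficient. Gathering $\nabla_x\varepsilon\cdot b$, the Hessian term $\tfrac12\mbox{tr}\{\nabla_x^2\varepsilon\,\sigma\sigma^*\}$, and the jump compensator gives total drift $\mathcal{L}_x\varepsilon(s,X,Y)-f$, while the $dA$-coefficient is $\nabla_x\varepsilon\cdot\nabla\phi(X)-h$. To match these to $-\tilde f$ and $-\tilde h$ I rely on the identities between $\varepsilon$ and its inverse $\eta$: from $\varepsilon(s,x,\eta(s,x,y))=y$ and $\partial_y\varepsilon\equiv1$ one gets $\eta(s,X,U)=Y$, $\nabla_x\varepsilon(s,X,Y)=-\nabla_x\eta(s,X,U)$, and, since both fields are affine in $y$ with $y$-independent spatial increments, $\mathcal{L}_x\varepsilon(s,X,Y)=-\mathcal{L}_x\eta(s,X,U)$ and $\mathcal{A}_x\varepsilon(s,X,Y)=-\mathcal{A}_x\eta(s,X,U)$. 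These give $Z=V+(\nabla_x\eta\sigma)(s,X,U)$ and $\int_{\mathcal{E}}J\,\nu(de)=\mathcal{A}_x\eta(s,X,U)+\int_{\mathcal{E}}M\,\nu(de)$, whence $f(s,X,Y,Z,\int_{\mathcal{E}}J\,\nu(de))=f(s,X,\eta(s,X,U),(\nabla_x\eta\sigma)(s,X,U)+V,\mathcal{A}_x\eta(s,X,U)+\int_{\mathcal{E}}M\,\nu(de))$; adding $\mathcal{L}_x\varepsilon=-\mathcal{L}_x\eta$ turns the drift into exactly $-\tilde f$ of (\ref{f}), and $\nabla_x\varepsilon\cdot\nabla\phi-h=-(\nabla\phi\cdot\nabla_x\eta)-h=-\tilde h$ of (\ref{h}) via $\partial\eta/\partial\mathbf{n}=\nabla\phi\cdot\nabla_x\eta$. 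The terminal value is $U^{t,x}(T)=\varepsilon(T,X(T),Y(T))=Y(T)=l(X(T))$ since $\int_T^T=0$. This shows $(U,V,M)$ solves (\ref{GBSDEJ}); membership in $V^2(\mathbb{F},[t,T];\cdots)$ follows from the boundedness of the derivatives of $\varepsilon,\eta$ under (A5.2) together with the estimates of Proposition \ref{continuous}, and uniqueness is Theorem \ref{EXUNI} with $g\equiv0$ once one checks that $\tilde f,\tilde h$ inherit (A4.1)--(A4.2) for almost every $\omega$.

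I expect the main obstacle to be the bookkeeping in the jump and second-order terms of the It\^o--Ventzell formula: one must correctly separate the compensated martingale $\int_{\mathcal{E}}M\tilde N$ from the nonlocal drift and simultaneously verify that every $B$-driven correction cancels. The conceptual crux, however, is the set of identities $\nabla_x\varepsilon=-\nabla_x\eta$, $\mathcal{L}_x\varepsilon=-\mathcal{L}_x\eta$ and $\mathcal{A}_x\varepsilon=-\mathcal{A}_x\eta$ relating the transform to its inverse, which are precisely what make the transformed data $(\tilde f,0,\tilde h)$ appear exactly as in (\ref{f})--(\ref{h}); these hinge on $\varepsilon,\eta$ being affine in $y$ with $y$-independent spatial increments, a structural consequence of the linearity of the $\overleftarrow{d}B$-term in (\ref{SPDE}).
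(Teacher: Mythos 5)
Your proposal is correct and follows essentially the same route as the paper: apply the jump-diffusion It\^o--Ventzell formula of Proposition \ref{ItoVentzell} to $\varepsilon(s,X^{t,x}(s),Y^{t,x}(s))$, observe the exact cancellation of the $\overleftarrow{d}B$-terms, and use the identities $\nabla_x\varepsilon=-\nabla_x\eta$, $\nabla_x^2\varepsilon=-\nabla_x^2\eta$ (and their nonlocal analogues) together with $Y=\eta(s,X,U)$ to identify the resulting drift and $dA$-coefficient with $\tilde f$ and $\tilde h$. The only difference is cosmetic: you spell out the noise-by-noise bookkeeping and the uniqueness step (via Theorem \ref{EXUNI} with $g\equiv0$) more explicitly than the paper, which instead cites Proposition 3.4 of Buckdahn and Ma for the integrability of $(U,V,M)$.
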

\begin{proof}
From Proposition 3.4 in Buckdahn and Ma \cite{Buckdahn2001}, we get
\begin{align*}
(U, V,M)\in V^2(\mathbb{F},[t,T];\mathbb{R}\times\mathbb{R}^d\times L^2(\mathcal{E},\nu)).
\end{align*}
Applying the generalized It\^{o}-Ventzell formula gives
\begin{align*}
U(s)=&l(X^{t,x}(T))-\int_s^Tg(s, X^{t,x}(s))\overleftarrow{d}B(s)-\int_s^T\nabla_x\varepsilon(r,X^{t,x}(r),Y^{t,x}(r))\cdot b(r,X^{t,x}(r))dr\\
&-\int_s^T(\nabla_x\varepsilon\sigma)(r,X^{t,x}(r),Y^{t,x}(r))dW(r)-\int_s^T\frac{\partial\varepsilon}{\partial \mathbf{n}}(r,X^{t,x}(r),Y^{t,x}(r))dA^{t,x}(r)\\
&-\int_s^T\int_{\mathcal{E}}\nabla_x\varepsilon(r,X^{t,x}(r),Y^{t,x}(r))\cdot\gamma(r,X^{t,x}(r),e)\tilde{N}(dr,de)\\
&-\frac{1}{2}\int_s^T\mbox{tr}\{(\sigma\sigma^*)(r,X^{t,x}(r))\nabla_x^2\varepsilon(r,X^{t,x}(r),Y^{t,x}(r))\}dr\\
&-\int_s^T\int_{\mathcal{E}}\{\varepsilon(r, X^{t,x}(r-)+\gamma(r,X^{t,x}(r-),e),Y^{t,x}(r))\\
&-\varepsilon(r, X^{t,x}(r-),Y^{t,x}(r))-\nabla_x\varepsilon(r, X^{t,x}(r-),Y^{t,x}(r))\cdot\gamma(r,X^{t,x}(r-),e)\}N(dr,de)\\
&+\int_s^Tf\left(r, X^{t,x}(r), Y^{t,x}(r), Z^{t,x}(r), \int_{\mathcal{E}}J^{t,x}(r,e)\nu(de)\right)dr\notag\\
&+\int_s^Tg(r, X^{t,x}(r))\overleftarrow{d}B(r)-\int_s^TZ^{t,x}(r)dW(r)-\int_s^T\int_{\mathcal{E}}J^{t,x}(r,e)\tilde{N}(dr,de)\notag\\
&+\int_s^Th(r,X^{t,x}(r), Y^{t,x}(r))dA^{t,x}(r)\\
=&l(X^{t,x}(T))+\int_s^T\mathfrak{F}\left(r, X^{t,x}(r), Y^{t,x}(r), Z^{t,x}(r), \int_{\mathcal{E}}J^{t,x}(r,e)\nu(de)\right)\\
&+\int_s^T\mathfrak{H}(r, X^{t,x}(r), Y^{t,x}(r))dA^{t,x}(r)-\int_s^TV^{t,x}(r)dW(r)\\
&-\int_s^T\int_{\mathcal{E}} M^{t,x}(r,e)\tilde{N}(dr,de),
\end{align*}
where
\begin{align*}
\mathfrak{F}(s,x,y,z,j):=&-\nabla_x\varepsilon(s,x,y)\cdot b(s,x)-\frac{1}{2}\mbox{tr}\{(\sigma\sigma^*)(s,x)\nabla_x^2\varepsilon(s,x,y)\}+f(s,x,y,z,j)\\
&-\int_{\mathcal{E}}\left\{\varepsilon(s, x+\gamma(r,x,e),y)-\varepsilon(s, x,y)-\nabla_x\varepsilon(s, x,y)\cdot\gamma(r,x,e)\right\}\nu(de),\\
\mathfrak{H}(s,x,y):=&h(s,x,y)-\frac{\partial\varepsilon}{\partial \mathbf{n}}(s,x,y).
\end{align*}
By (\ref{U})-(\ref{M}), we get
\begin{align}
Y^{t,x}(s)&=\eta(s, X^{t,x}(s), U^{t,x}(s)),\label{Y}\\
Z^{t,x}(s)&=V^{t,x}(s)+(\nabla_x\eta\sigma)(s,X^{t,x}(s), U^{t,x}(s)),\label{Z}\\
J^{t,x}(s,\cdot)&=M^{t,x}(s,\cdot)+\eta(s,X^{t,x}(s-)+\gamma(s, X^{t,x}(s-),\cdot),U^{t,x}(s))-\eta(s,X^{t,x}(s-),U^{t,x}(s)).\label{J}
\end{align}
From the definitions of $\varepsilon$ and $\eta$, we derive
\begin{align}
\nabla_x\varepsilon(s,X^{t,x}(s), Y^{t,x}(s))&=-\nabla_x\eta(s,X^{t,x}(s), U^{t,x}(s)),\label{DX}\\
\nabla_x^2\varepsilon(s,X^{t,x}(s), Y^{t,x}(s))&=-\nabla_x^2\eta(s,X^{t,x}(s), U^{t,x}(s)). \label{DXX}
\end{align}
Hence, form (\ref{Y})-(\ref{DXX}), we obtain
\begin{align*}
\mathfrak{F}&\left(s, X^{t,x}(s), Y^{t,x}(s), Z^{t,x}(s), \int_{\mathcal{E}}J^{t,x}(s,e)\nu(de)\right)\\
=&\nabla_x\eta(s,X^{t,x}(s), U^{t,x}(s))\cdot b(s,X^{t,x}(s))+\frac{1}{2}\mbox{tr}\{(\sigma\sigma^*)(s,X^{t,x}(s))\nabla_x^2\eta(s,X^{t,x}(s), U^{t,x}(s))\}\\
&+f\left(s,X^{t,x}(s),\eta(s, X^{t,x}(s), U^{t,x}(s)), V^{t,x}(s)+(\nabla_x\eta\sigma)(s,X^{t,x}(s), U^{t,x}(s)),\right.\\
&\left.\int_{\mathcal{E}}\{M^{t,x}(s,e)+\eta(s, X^{t,x}(s-)+\gamma(s,X^{t,x}(s-),e), U^{t,x}(s))-\eta(s, X^{t,x}(s-), U^{t,x}(s))\}\nu(de)\right)\\
&+\int_{\mathcal{E}}\{\eta(s,X^{t,x}(s-)+\gamma(s,X^{t,x}(s-),e), U^{t,x}(s))-\eta(s,X^{t,x}(s-), U^{t,x}(s))\}\nu(de)\\
=&\tilde{f}\left(s, X^{t,x}(s),U^{t,x}(s)), V^{t,x}(s), \int_{\mathcal{E}}M^{t,x}(s,e)\nu(de)\right)
\end{align*}
and
\begin{align*}
\mathfrak{H}(s,X^{t,x}(s), Y^{t,x}(s)):=&h(s,X^{t,x}(s), Y^{t,x}(s))-\frac{\partial\varepsilon}{\partial \mathbf{n}}(s,X^{t,x}(s), Y^{t,x}(s))\\
=&h(s,X^{t,x}(s), \eta(s,X^{t,x}(s), U^{t,x}(s)))+\frac{\partial\eta}{\partial \mathbf{n}}(s,X^{t,x}(s), U^{t,x}(s))\\
=&\tilde{h}(s,X^{t,x}(s), U^{t,x}(s)).
\end{align*}
This completes the proof.
\end{proof}

We are now ready to give the main result of this article.
\begin{theorem}
Define $u(t,x):=Y^{t,x}(t), (t,x)\in[0,T]\times\bar{\mho}$. Then the random field $u$ is a stochastic viscosity solution to the SIPDE $(f,g,h)$ (\ref{SPDE}).
\end{theorem}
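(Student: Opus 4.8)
The plan is to reduce this stochastic statement to the deterministic viscosity result of Section~\ref{sec4} by means of the Doss--Sussmann transformation. By Proposition~\ref{keypro}, the random field $u$ is a stochastic viscosity solution of the SIPDE $(f,g,h)$ if and only if $v(t,x)=\varepsilon(t,x,u(t,x))$ is a stochastic viscosity solution of the transformed SIPDE $(\tilde f,0,\tilde h)$, with $\tilde f$ and $\tilde h$ given by (\ref{f}) and (\ref{h}). Since $X^{t,x}(t)=x$, we have $v(t,x)=\varepsilon(t,x,Y^{t,x}(t))=U^{t,x}(t)$, and the preceding theorem shows that the triple $(U^{t,x},V^{t,x},M^{t,x})$ is the unique solution of the GBSDE with jumps (\ref{GBSDEJ}), whose driver is $\tilde f$, whose terminal value is $l(X^{t,x}(T))$, and whose boundary coefficient is $\tilde h$. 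Thus it suffices to prove that $U^{t,x}(t)$ furnishes a stochastic viscosity solution of $(\tilde f,0,\tilde h)$.

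The essential simplification is that the $\overleftarrow{d}B$-coefficient of the transformed equation vanishes. Consequently the stochastic flow attached to $(\tilde f,0,\tilde h)$ in Definition~\ref{DEF2} is the identity, so the notion of stochastic viscosity sub/supersolution for $(\tilde f,0,\tilde h)$ collapses to an $\omega$-wise viscosity requirement: for almost every $\omega$, the field $v(\omega,\cdot,\cdot)$ must be a viscosity sub/supersolution, in the sense of Definition~\ref{DEF1}, of the IPDE (\ref{PDE}) with the frozen coefficients $\tilde f(\omega,\cdot)$, $\tilde h(\omega,\cdot)$ and $l$. I would make this precise by conditioning on $\mathcal{F}_T^B$: since $W$ and $\tilde N$ are independent of $B$, freezing the entire $B$-path leaves $W$ a Brownian motion and $\tilde N$ a compensated Poisson measure, and it turns (\ref{GBSDEJ}) into a genuine GBSDE with jumps of exactly the type treated in Section~\ref{sec4}, now with $\mathcal{F}_T^B$-measurable (but, given the path, deterministic) coefficients.

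Next I would verify that, for almost every fixed $B$-path, the frozen coefficients $\tilde f$ and $\tilde h$ fall under the hypotheses (A4.1)--(A4.2). This uses (A5.1)--(A5.2): under (A5.2) the map $\eta$ belongs to $\mathcal{C}_b^{0,2}(\mathbb{F}^B,[0,T]\times\bar{\mho})$, so $y\mapsto\eta(t,x,y)$ is smooth and strictly increasing while $\nabla_x\eta$, $\nabla_x^2\eta$ and the operators $\mathcal{L}_x\eta$, $\mathcal{A}_x\eta$ are bounded and Lipschitz in $(x,y)$ uniformly along the path; combining this with the structural bounds, the Lipschitz continuity and the monotonicity of $f,h,l$ from (A4.1)--(A4.2) shows that $\tilde f$ and $\tilde h$ inherit the linear growth, Lipschitz continuity and the monotonicity in $j$. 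Granting this, Theorem~\ref{VGBSDET}, applied path by path, yields that $(t,x)\mapsto U^{t,x}(t)=v(t,x)$ is a viscosity solution of the IPDE with coefficients $(\tilde f,\tilde h,l)$ for almost every $\omega$, which is precisely the assertion that $v$ is a stochastic viscosity solution of $(\tilde f,0,\tilde h)$. Invoking Proposition~\ref{keypro} in the converse direction then gives that $u$ is a stochastic viscosity solution of $(f,g,h)$.

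The main obstacle I anticipate is the rigorous passage from the randomized formulation of Definition~\ref{DEF2} (test fields $\varphi\in\mathcal{C}^{1,2}(\mathcal{F}_\tau^B,\cdot)$ evaluated at a random stopping time $\tau$ and a random state $\zeta$) to the $\omega$-wise deterministic formulation of Definition~\ref{DEF1} on which Theorem~\ref{VGBSDET} is built. One must argue that testing against all $\mathcal{F}_\tau^B$-measurable random fields at $(\tau,\zeta)$ is, after conditioning on $\mathcal{F}_T^B$ and using a measurable-selection and regularization argument, equivalent to testing against all deterministic $C^{1,2}$ functions at each fixed interior or boundary point for almost every path; the joint continuity of $(t,x)\mapsto U^{t,x}(t)$ supplied by Proposition~\ref{RFC} is what makes this reduction legitimate. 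The remaining steps — the It\^{o}--Ventzell computation behind (\ref{GBSDEJ}) and the coefficient bookkeeping for $\tilde f,\tilde h$ — are routine given Propositions~\ref{ItoVentzell} and~\ref{keypro}.
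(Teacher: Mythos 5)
Your proposal is correct and follows essentially the same route as the paper: apply the Doss--Sussmann reduction of Proposition~\ref{keypro}, identify $v(t,x)=U^{t,x}(t)$ as the solution of the transformed GBSDE with jumps (\ref{GBSDEJ}), and invoke Theorem~\ref{VGBSDET} $\omega$-wise to conclude that $v$ solves the SIPDE $(\tilde f,0,\tilde h)$ in the stochastic viscosity sense. The one step you flag as the main obstacle --- the equivalence, when $g\equiv 0$, between the randomized test-field formulation of Definition~\ref{DEF2} and the $\omega$-wise deterministic Definition~\ref{DEF1} --- is handled in the paper only by citing Definition~2.2 of Buckdahn and Ma, so your explicit awareness of it matches (and if anything exceeds) the level of rigor of the published argument.
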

\begin{proof}
From Proposition \ref{RFC}, we see that the random field $(t,x)\mapsto Y^{t,x}(t)$ is continuous. Since $Y^{t,x}(s)$ is $\mathcal{F}_{t,s}^W\vee \mathcal{F}_{t,s}^N\vee \mathcal{F}_{s,T}^B$-measurable, it follows that $Y^{t,x}(t)$ is $\mathcal{F}_{t,T}^B$-measurable. Thus $u(t,x)\in \mathcal{C}(\mathbb{F}^B,[0,T]\times\bar{\mho})$.  For a fixed $\omega\in\Omega$, since $(U^{t,x}(\omega,\cdot),V^{t,x}(\omega,\cdot),M^{t,x}(\omega,\cdot))$ is the unique solution of (\ref{GBSDEJ}) with coefficients $(\tilde{f}(\omega,\cdot,\cdot,\cdot,\cdot,\cdot),\tilde{h}(\omega,\cdot,\cdot,\cdot))$, it follows from Theorem \ref{VGBSDET} that
$U^{t,x}(\omega,t)=\varepsilon(\omega, t, X^{t,x}(\omega, t), Y^{ t,x}(\omega,t))$ is a viscosity solution to IPDE with coefficients $(\tilde{f}(\omega,\cdot,\cdot,\cdot,\cdot,\cdot),\tilde{h}(\omega,\cdot,\cdot,\cdot))$. Hence, by Definition 2.2. in Buckdahn and Ma \cite{Buckdahn2001}, $\varepsilon(\omega, t, x, u(\omega,t,x))$ is an $\omega$-wise viscosity solution to the SIPDE $(\tilde{f},0,\tilde{h})$. Therefore, from Proposition \ref{keypro}, we obtain that $u(t,x)$ is a stochastic viscosity solution to the SIPDE $(f,g,h)$ (\ref{SPDE}).
\end{proof}

\section{Singular stochastic optimal control}\label{sec6}
In this section, we study the singular stochastic optimal control
problem. We assume that the dynamics of a stochastic system is
modelled by the controlled GBDSDE with jumps  of the form
\begin{align}\label{SCGBDSDE}
\left\{\begin{array}{lcl} d X(t) & = & -f(t, X(t), Y(t), Z(t,\cdot))dt-Y(t)dW(t) +g(t, X(t))\overleftarrow{d}B(t)\\
\ & \ &-
\int_{\mathcal{E}}Z(t,e)\tilde{N}(dt,de)+h(t)dA(t), \ \ t\in[0,T],\\
X(T) &=&\xi,
\end{array}
\right.
\end{align}
where
$$f: \Omega\times[0,T]\times \mathbb{R} \times \mathbb{R}^d\times L^2(\mathcal{E},\nu)\to
 \mathbb{R},$$
$$g: \Omega\times \mathbb{R}\to \mathbb{R}^m, $$
$$h: \Omega\times[0,T]\to
 \mathbb{R},$$
 are  jointly measurable processes.
Here, we regard the process $A$ as the control process.

 The performance functional is given by
\begin{align*}
J(A)=E\left[\int_0^TF(t, X(t))dt+G(X(0))+\int_0^TH(t)dA(t)\right],
\end{align*}
where
$$F: \Omega\times[0,T]\times \mathbb{R} \to
 \mathbb{R},$$
$$G: \Omega\times \mathbb{R}\to \mathbb{R}, $$
$$H: \Omega\times[0,T]\to
 \mathbb{R},$$
 are given three jointly measurable processes.

A control process $A=\{A(t), t\in[0,T]\}$ is said to be
admissible if there exists a unique solution of (\ref{SCGBDSDE}) and
$$E\left[\int_0^T|F(t, X(t))|dt+|G(X(0))|+\int_0^T|H(t)|dA(t)\right]<\infty.$$
We denote the set of admissible controls by
$\mathcal{A}$. The optimal control problem is to find $A^*\in \mathcal{A}$ such
that
\begin{equation}\label{problem}
J(A^*)=\inf_{A\in\mathcal{A}}J(A).
\end{equation}

Suppose there exists a unique solution $(X,Y,Z)\in V^2(\mathbb{F},[0,T];\mathbb{R}\times\mathbb{R}^d\times L^2(\mathcal{E},\nu))$ of (\ref{SCGBDSDE}). For conditions which are sufficient to get existence and uniqueness of a solution of the equation (\ref{SCGBDSDE}), see
 the Section \ref{sec3}.

We define the Hamiltonian $\mathcal{H}: \Omega\times [0,T]\times
\mathbb{R}\times \mathbb{R}^d\times L^2(\mathcal{E},\nu)\times
\mathbb{R}\times \mathbb{R}^m\rightarrow \mathbb{R}$ as follows
\begin{align}\label{Hami}
\mathcal{H}(t,x,y,z, p,q)(dt,dA(t))=[F(t,x)-f(t, x,y,z)p-g(t,x)\cdot q]dt+[H(t)-ph(t)]dA(t),
\end{align}

We assume that $f$ and $g$ are continuously differentiable with respect to $x,y,z$, and $F$ and $G$ are continuously differentiable with respect to $x$. Therefore,  the Hamiltonian $\mathcal{H}$ is
continuously differentiable with respect  to $x,y,z$.

To simplify our notation, in what follows, we denote
\begin{align*}
&f(t)= f(t, X(t), Y(t), Z(t,\cdot)),\ \
f(t,e)= f(t, X(t), Y(t), Z(t,e)),\\
&f_x(t)=\frac{\partial f}{\partial x}(t, X(t), Y(t), Z(t,\cdot))
\end{align*}
and similarly to $f_y$, $f_z$, $g$, $g_x$, $g_y$, $F$, $F_x$, $G$, $G_x$, $\mathcal{H}$, $\mathcal{H}_x$, $\mathcal{H}_y$ and $\mathcal{H}_z$.

To derive the maximum principles, we
introduce the adjoint equation associated to the controlled system
(\ref{SCGBDSDE})
 governing the unknown $\mathcal{F}_t$-measurable
processes $(p(t), q(t))$ as the following forward doubly stochastic differential equation (FDSDE):
\begin{align}\label{ad}
\left\{\begin{array}{lcl} d p(t) & = &
-\mathcal{H}_x(t)(dt, dA(t))-q(t)\overleftarrow{d}B(t)-\mathcal{H}_y(t)dW(t)-\int_{\mathcal{E}}\mathcal{H}_z(t,e)\tilde{N}(dt,de),\\
p(0) &=&-G_x(0).
\end{array}
\right.
\end{align}


We suppose there exists a unique solution $(p,q)\in L^2(\mathbb{F},[0,T])\times L^2(\mathbb{F},[0,T]; \mathbb{R}^m)$ of the adjoint equation (\ref{ad}) corresponding to each admissible triple $(X,Y,Z)$. Readers may  apply the technology of time reversal on jump diffusion processes (see Jacod and Protter \cite{Jacod1988} or Wu and Liu \cite{Wu2018}) to derive the conditions under which the adjoint equation (\ref{ad}) has a unique solution.

\subsection{Sufficient Maximum Principle}
In this subsection, we establish a sufficient maximum principle for the
 backward doubly stochastic optimal control problem
(\ref{problem}).
\begin{theorem}\label{SMP}{(Sufficient stochastic maximum principle)}\\
Let $A^* \in{\mathcal A}$ with the corresponding solution
$(X^*(t),Y^*(t),Z^*(t,\cdot))$ of the system equation
\eqref{SCGBDSDE}, and $(p^*(t),q^*(t))$ of the adjoint
equation \eqref{ad}.
Suppose that\\
(i) The Hamiltonian $\mathcal{H}$ is convex in $x, y, z$  and
the cost function $G$ is convex in $x$.\\
(ii)
\begin{align}\label{smpcon}
\mathcal{H}(t,X^*(t),p^*(t),q^*(t))(dt,dA^*(t))=\inf_{A\in \mathcal{A}}
\mathcal{H}(t,X^*(t),p^*(t),q^*(t))(dt,dA(t)),
\end{align}
a.s., for almost all $t\in [0,T]$,  i.e.,
\begin{align*}
[H(t)-p^*(t)h(t)]dA(t)\geq [H(t)-p^*(t)h(t)]dA^*(t),
\end{align*}
for all $A\in \mathcal{A}$.

Then $A^*$ is an optimal
control.
\end{theorem}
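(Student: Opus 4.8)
The plan is to compare the performance functionals $J(A)$ and $J(A^*)$ directly and show that the difference is nonnegative for every admissible $A\in\mathcal{A}$. First I would write
\[
J(A)-J(A^*)=E\left[\int_0^T\big(F(t,X(t))-F(t,X^*(t))\big)dt+\big(G(X(0))-G(X^*(0))\big)+\int_0^T\big(H(t)dA(t)-H(t)dA^*(t)\big)\right],
\]
denoting by $(X,Y,Z)$ the state trajectory corresponding to $A$. The strategy is to re-express each of the three pieces using the Hamiltonian $\mathcal{H}$ defined in \eqref{Hami} and the adjoint pair $(p^*,q^*)$ solving \eqref{ad}, so that convexity and the minimum condition \eqref{smpcon} can be applied.

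The key computational step is to apply the product rule (the It\^o formula of Proposition \ref{Ito}) to $p^*(t)(X(t)-X^*(t))$ on $[0,T]$ and take expectations. Here I would use the dynamics of $X-X^*$ coming from \eqref{SCGBDSDE} (the drift difference $-(f(t)-f^*(t))$, the diffusion difference in $dW$ and $\overleftarrow{d}B$ and $\tilde{N}$, and the $dA$, $dA^*$ terms) together with the adjoint dynamics $dp^*(t)=-\mathcal{H}_x^*(t)(dt,dA^*(t))-q^*(t)\overleftarrow{d}B(t)-\mathcal{H}_y^*(t)dW(t)-\int_{\mathcal{E}}\mathcal{H}_z^*(t,e)\tilde N(dt,de)$. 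The martingale terms vanish in expectation under the integrability built into admissibility, and the backward-It\^o cross terms from $B$ together with the forward-It\^o cross terms from $W$ and $\tilde N$ combine with the $\mathcal{H}_y^*,\mathcal{H}_z^*,q^*$ coefficients. Using the boundary conditions $p^*(0)=-G_x(X^*(0))$ and $X(T)=X^*(T)=\xi$, the integration by parts identifies $E\big[G_x(X^*(0))(X(0)-X^*(0))\big]$ in terms of integrals of the Hamiltonian differences, converting $J(A)-J(A^*)$ into an expression involving $\mathcal{H}$, its gradient $(\mathcal{H}_x,\mathcal{H}_y,\mathcal{H}_z)$ evaluated along the starred trajectory, and the state differences.

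Once the difference is written in Hamiltonian form, I would invoke two inequalities. Convexity of $G$ in $x$ gives $G(X(0))-G(X^*(0))\ge G_x(X^*(0))(X(0)-X^*(0))$, which handles the terminal cost piece. Convexity of $\mathcal{H}$ in $(x,y,z)$ gives
\[
\mathcal{H}(t,\cdot)-\mathcal{H}^*(t,\cdot)\ge \mathcal{H}_x^*(t)(X-X^*)+\mathcal{H}_y^*(t)(Y-Y^*)+\mathcal{H}_z^*(t)(Z-Z^*),
\]
which controls the running-cost and $dA$ contributions after the integration by parts cancels the gradient terms. What survives is exactly the quantity $\big[H(t)-p^*(t)h(t)\big]dA(t)-\big[H(t)-p^*(t)h(t)\big]dA^*(t)$, which is nonnegative by the minimum condition (ii). Collecting these, $J(A)-J(A^*)\ge 0$, so $A^*$ is optimal.

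The main obstacle I expect is the bookkeeping in the It\^o expansion of $p^*(X-X^*)$: because the equation is a \emph{doubly} stochastic system, the product rule produces both a backward-It\^o bracket from the $\overleftarrow{d}B$ integrals (contributing a $-q^*(t)(\cdots)$ term with the opposite sign from the forward case, per Proposition \ref{Ito}) and a forward bracket from the $dW$ and compensated-Poisson integrals, and one must ensure all cross-variation and jump terms (the $N(\{s\},de)$ and $\Delta A(s)$ contributions) cancel correctly so that only the clean Hamiltonian-gradient terms remain. Verifying that these brackets line up with the definition of $\mathcal{H}$ in \eqref{Hami}, and that the convexity inequality is applied to the correct combined expression rather than to $F$ and $f,g$ separately, is the delicate part; the rest is a direct assembly of convexity and the pointwise minimum.
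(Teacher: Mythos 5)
Your proposal follows essentially the same route as the paper's proof: the decomposition of $J(A)-J(A^*)$ into the running-cost, initial-cost and singular-control pieces, the convexity bound $G(X(0))-G(X^*(0))\ge G_x(X^*(0))(X(0)-X^*(0))=-p^*(0)(X(0)-X^*(0))$, the It\^{o} product rule applied to $p^*(t)(X(t)-X^*(t))$ to trade the initial term for Hamiltonian-gradient integrals, and finally the convexity of $\mathcal{H}$ together with the minimum condition \eqref{smpcon}. The bookkeeping concerns you raise (backward versus forward brackets, jump and $\Delta A$ cross terms, and justifying that the local martingales are true martingales, which the paper handles by a localization argument) are exactly the delicate points, and your outline resolves them in the same way the paper does.
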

\begin{proof}
By considering a suitable
increasing sequence of stopping times converging to $T$ as arguing
in \cite{Dahl2016}, we see that all the local martingales
appearing in the proof below are martingales.
To simplify our notation,  for any control process $A\in{\mathcal{A}}$ with corresponding
$(X(t), Y(t), Z(t,\cdot))$, we denote
$$\mathcal{H}(t)=\mathcal{H}(t, X(t), Y(t), Z(t,\cdot), p^*(t), q^*(t)), \ \ \mathcal{H}^*(t)=\mathcal{H}(t, X^*(t), Y^*(t), Z^*(t,\cdot), p^*(t), q^*(t)),$$
$$F^*(t)=F(t,X^*(t)), \ \ G^*(0)=G(X^*(0)),  \ \ f^*(t)=f(t, X^*(t),Y^*(t),Z^*(t,\cdot)), \ \
g^*(t)=g(t,X^*(t)).$$

 Consider
$$J(A)-J(A^*)=I_1+I_2+I_3,$$ where
\begin{eqnarray*}
I_1=E\left[\int_0^T(F(t)-F^*(t))dt\right],
\end{eqnarray*}
$$
I_2=E[G(0)-G^*(0)],
$$
$$I_3=E\left[\int_0^TH(t)(dA(t)-dA^*(t))\right].$$
Since $G$ is convex with respect to $x$, we have
\begin{align}\label{6.2}
I_2\geq
E[G_x(X^*(0))(X(0)-X^*(0))]=E[-p^*(0)(X(0)-X^*(0))].
\end{align}
Applying It\^o's formula to $p^*(t)(X(t)-X^*(t))$ yields
\begin{align}\label{6.3}
E[&-p^*(0)(X(0)-X^*(0))]=-E\left[\int_0^Tp^*(t-)(h(t)dA(t)-h(t)dA^*(t))\right]\notag\\
&-E\left[\int_0^T(X(t)-X^*(t))\mathcal{H}_x^*(t)dt\right]-E\left[\int_0^T(Y(t)-Y^*(t))\mathcal{H}_y^*(t)dt\right]\notag\\
&-E\left[\int_0^T\int_{\mathcal{E}}(Z(t,e)-Z^*(t,e))\mathcal{H}_z^*(t,e)\nu(de)dt\right].
\end{align}
By the definition of the Hamiltonian $\mathcal{H}$, we get
\begin{align}\label{6.4}
I_1= &E\left[\int_0^T\left\{\mathcal{H}(t-)(dt,dA(t))-\mathcal{H}^*(t-)(dt,dA^*(t))+\int_0^Tp^*(t)(f(t)-f^*(t))dt\right.\right.\notag\\
& \ \ \left.\left.+\int_0^Tp^*(t)(g(t)-g^*(t))dt -(H(t)-h(t)p^*(t-))(dA(t)-dA^*(t))\right\}\right].
\end{align}
Combining (\ref{6.2})-(\ref{6.4}) gives
\begin{align*}
J(A)-J(A^*)\geq &E\left[\int_0^T\left\{\mathcal{H}(t-)(dt,dA(t))
-\mathcal{H}^*(t-)(dt,dA^*(t))\right.\right.\notag\\
& -(X(t-)-X^*(t-))\mathcal{H}_x^*(t-)(dt,dA^*(t))\\
&-(Y(t-)-Y^*(t-))\mathcal{H}_y^*(t-)(dt,dA^*(t))\\
& \left.\left. -\int_{\mathcal{E}} (Z(t-,e)-Z^*(t-,e))\mathcal{H}_z^*(t-,e)(dt,dA^*(t))\nu(de)\right\}\right].
\end{align*}
By (\ref{smpcon}) and using the convexity of the Hamiltonian $\mathcal{H}$ lead to
\begin{align*}
J(A)-J(A^*)\geq  0.
\end{align*}
Since this holds for all $A\in{\mathcal{A}}$, this proves that $A^*$
is optimal.
\end{proof}

\subsection{Necessary Maximum Principle}
In this subsection, we give necessary conditions for the optimal
control problem.
We make the following assumptions:\\
(A6.1) For all $t\in[0,T]$ and all bounded $\mathcal{F}_t$-measurable $\alpha(\omega)$, the control $A(s):=\alpha\delta_t(s)$, $s\in[0,T]$ belongs to $\mathcal{A}$, where $\delta_t(\cdot)$ is the unit point mass at $t$.\\
(A6.2) For all $A, A'\in \mathcal{A}$ there exists $\delta>0$ such that the control $A+\rho A'\in \mathcal{A}$ for all $\rho\in(-\delta,\delta)$.

 Suppose that $A^*$ is the optimal control with
corresponding state processes $(X^*(t),Y^*(t),Z^*(t,\cdot))$ and the corresponding solution $(p^*(t), q^*(t))$ to the adjoint equation. For any
$A\in\mathcal A$, define $A^{\rho}(\cdot):=A^*(\cdot)+\rho A(\cdot)$, with the corresponding state
process $(X^{\rho}(t),Y^{\rho}(t),Z^{\rho}(t,\cdot))$.
To simplify our notation, we denote $f^*(t)=f(t, X^*(t),Y^*(t),Z^*(t,\cdot))$, $f^\rho(t)=f(t, X^\rho(t),Y^\rho(t),Z^\rho(t,\cdot))$
and similarly for $g$, $F$ and $G$.

\begin{lemma}
\begin{align}
&\lim_{\rho\to 0}
E\left[\sup_{t\in[0,T]}|X^{\rho}(t)-X^*(t)|^2\right]=0,\label{7.1}\\&\lim_{\rho\to
0}
E\left[\int_0^T|Y^{\rho}(t)-Y^*(t)|^2dt\right]=0,\label{7.2}\\&\lim_{\rho\to
0}
E\left[\int_0^T\|Z^{\rho}(t,\cdot)-Z^*(t,\cdot)\|_{\nu}^2dt\right]=0.\label{7.3}
\end{align}
\end{lemma}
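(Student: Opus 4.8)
The plan is to establish a stability estimate for the controlled GBDSDE (\ref{SCGBDSDE}), comparing the perturbed triple $(X^\rho,Y^\rho,Z^\rho)$ associated with $A^\rho=A^*+\rho A$ against the optimal triple $(X^*,Y^*,Z^*)$. First I would set $\bar X=X^\rho-X^*$, $\bar Y=Y^\rho-Y^*$, $\bar Z=Z^\rho-Z^*$ and subtract the two copies of (\ref{SCGBDSDE}). Since both solutions have the same terminal value $\xi$ and the same forcing coefficient $h$, the difference solves a GBDSDE with zero terminal condition $\bar X(T)=0$, driven in the $dt$-term by $f^\rho-f^*$, in the backward $B$-integral by $g^\rho-g^*$, and by the control increment $\rho\,h(t)\,dA(t)$, because $A^\rho-A^*=\rho A$. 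Thus every source term is either of Lipschitz type in $(\bar X,\bar Y,\bar Z)$ or carries an explicit factor of $\rho$; admissibility of $A$ and $A^*$ (guaranteed by (A6.2)) supplies the integrability needed below.

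Next I would apply the weighted It\^o formula of Proposition \ref{Ito} to $e^{\mu\tilde A(t)}\bar X^2(t)$, where $\tilde A:=\|\bar A\|+A^*$ combines the total variation of $\bar A=\rho A$ with $A^*$, exactly as in the proof of Proposition \ref{continuous}; the exponential weight is what allows the term $\mu E\int_t^T e^{\mu\tilde A}\bar X^2\,d\tilde A$ to absorb the integrals taken against $dA$ rather than $ds$. Taking expectations annihilates the three martingale integrals and leaves an identity equating $E[e^{\mu\tilde A(t)}\bar X^2(t)]$ plus $E\int_t^T e^{\mu\tilde A}(|\bar Y|^2+\|\bar Z\|_\nu^2)\,ds$ to the contributions of $g^\rho-g^*$, of $f^\rho-f^*$, and of the $\rho$-weighted control terms. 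The Lipschitz bounds (the analogues of (A4.1)) give $|g^\rho-g^*|^2\le C|\bar X|^2$ and $|f^\rho-f^*|^2\le C(|\bar X|^2+|\bar Y|^2+\|\bar Z\|_\nu^2)$, and Young's inequality $2|\bar X|\,|f^\rho-f^*|\le \tfrac1\epsilon|\bar X|^2+\epsilon|f^\rho-f^*|^2$ lets me pick $\epsilon$ small enough that the resulting $\epsilon C(|\bar Y|^2+\|\bar Z\|_\nu^2)$ is absorbed on the left. The control terms $2\rho E\int_t^T e^{\mu\tilde A}\bar X\,h\,dA$, together with the jump contributions $\rho^2\sum h^2(\Delta A)^2$ and the cross term $\rho\sum_s\int_{\mathcal E}\bar Z(s,e)\,N(\{s\},de)\,h(s)\,\Delta A(s)$, are each bounded, after Young's inequality and using $\mu$ large, by a quantity of order $\rho^2$ plus pieces reabsorbed on the left.

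After these absorptions I obtain $E[\bar X^2(t)]\le C\int_t^T E[\bar X^2(s)]\,ds+O(\rho^2)$, so the backward Gronwall lemma yields $\sup_{0\le t\le T}E[\bar X^2(t)]=O(\rho^2)$. Substituting this back into the energy identity gives $E\int_0^T(|\bar Y|^2+\|\bar Z\|_\nu^2)\,ds=O(\rho^2)$, which already proves (\ref{7.2}) and (\ref{7.3}). To upgrade the pointwise bound on $\bar X$ into the uniform bound (\ref{7.1}), I would return to the equation for $\bar X$, take $\sup_{0\le t\le T}$ inside the expectation, and estimate the three stochastic integrals by the Burkholder-Davis-Gundy inequality, producing $E[\sup_t\bar X^2(t)]\le C\bigl(O(\rho^2)+E\int_0^T(|\bar Y|^2+\|\bar Z\|_\nu^2)\,ds\bigr)=O(\rho^2)$. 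Letting $\rho\to0$ then delivers all three limits.

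The main obstacle is the treatment of the terms integrated against the control $dA$ when $A$ is merely increasing and possibly discontinuous, in particular the jump cross term $\rho\sum_s\int_{\mathcal E}\bar Z(s,e)\,N(\{s\},de)\,h(s)\,\Delta A(s)$, which couples the Poisson jumps to the jumps of the control and is only linear in $\rho$. I expect to control it by splitting through Young's inequality into a piece $\tfrac{\epsilon}{2}\int_0^T\int_{\mathcal E}|\bar Z|^2\,N(ds,de)$, whose expectation equals $\tfrac{\epsilon}{2}E\int_0^T\|\bar Z\|_\nu^2\,ds$ and is therefore absorbable on the left, and a remainder of order $\rho^2$; the exponential weight $e^{\mu\tilde A}$ is essential precisely because it closes the Gronwall estimate against $dA$ rather than $ds$.
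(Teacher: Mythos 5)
Your proposal is correct and follows essentially the same route as the paper: It\^o's formula applied to the squared difference $|X^\rho-X^*|^2$, Young's inequality to absorb the $\bar Y$, $\bar Z$ and jump cross terms, Gronwall's lemma for the pointwise bound (hence \eqref{7.2}--\eqref{7.3}), and the Burkholder--Davis--Gundy inequality to upgrade to the uniform-in-$t$ statement \eqref{7.1}. The only deviation is your use of the exponential weight $e^{\mu\tilde A}$ (imported from the proof of Proposition \ref{continuous}) to absorb the integrals taken against $dA$; the paper omits the weight here and simply lets $\rho\to0$ in the term $2\rho E\bigl[\int_t^T(X^\rho-X^*)h\,dA\bigr]$, so your version is, if anything, the more careful one (even though the control contributions then come out as $O(\rho)$ rather than the $O(\rho^2)$ you claim, which is still sufficient).
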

\begin{proof}
Applying It\^ o's formula to $|X^{\rho}(t)-X^*(t)|^2$  and taking the expectation yield
\begin{align*}
&E\left[|X^{\rho}(t)-X^*(t)|^2+\int_t^T|Y^{\rho}(s)-Y^*(s)|^2ds +\int_t^T\|Z^{\rho}(s,\cdot)-Z^*(s,\cdot)\|_{\nu}^2ds\right] \\
&=2E\left[\int_t^T(X^{\rho}(s)-X^*(s))h(s)(dA^\rho(s)-dA^*(s))\right]\\
& \ \ \ +2E\left[\int_t^T(X^{\rho}(s)-X^*(s))(f^\rho(s)-f^*(s))ds\right] +E\left[\int_t^T (g^\rho(s)-g^*(s))ds\right]\\
& \ \ \ +E\left[\sum_{t<s\leq T}h(s)\int_{\mathcal{E}}(Z^\rho(s,e)-Z^*(s,e))\tilde{N}(\{s\},de)(\Delta A^\rho(s)-\Delta A^*(s))\right]\\
&\leq2\rho E\left[\int_t^T(X^{\rho}(s)-X(s))h(s)dA(s)\right]+\left(3C+\frac{1}{2}\right) E\left[\int_t^T(X^{\rho}(s)-X(s))ds\right]\\
& \ \ \ +\frac{1}{2}\left\{ \int_t^T|Y^{\rho}(s)-Y^*(s)|^2ds +\int_t^T\|Z^{\rho}(s,\cdot)-Z^*(s,\cdot)\|_{\nu}^2ds \right\}\\
& \ \ \ +\rho E\left[\sum_{t<s\leq T}h(s)\int_{\mathcal{E}}(Z^\rho(s,e)-Z^*(s,e))\tilde{N}(\{s\},de)\Delta A(s)\right].
\end{align*}
Rearranging the terms leads to
\begin{align*}
&E\left[|X^{\rho}(t)-X^*(t)|^2ds\right] +\frac{1}{2}\left\{ \int_t^T|Y^{\rho}(s)-Y^*(s)|^2ds +\int_t^T\|Z^{\rho}(s,\cdot)-Z^*(s,\cdot)\|_{\nu}^2ds \right\} \\
&\leq2\rho E\left[\int_t^T(X^{\rho}(s)-X(s))h(s)dA(s)\right]+\left(3C+\frac{1}{2}\right) E\left[\int_t^T(X^{\rho}(s)-X(s))ds\right]\\
& \ \ \ +\rho E\left[\sum_{t<s\leq T}h(s)\int_{\mathcal{E}}(Z^\rho(s,e)-Z^*(s,e))\tilde{N}(\{s\},de)\Delta A(s)\right].
\end{align*}
Letting $\rho\to 0$ and using Gronwall's lemma, we obtain
(\ref{7.2})-(\ref{7.3}). The result (\ref{7.1}), with the $\sup_{t\in[0,T]}$ inside the expectation, follows from the Burkholder-Davis-Gundy inequality.
\end{proof}

We now introduce the  variational equation：
\begin{align}\label{varequ}
\left\{
\begin{array}{lcl}
-dX^1(t)&=&\left[f_x^*(t)X^1(t)+f_y^*(t)Y^1(t)+\int_{\mathcal{E}}f_z^*(t, e)Z^1(t,e)\nu(de)\right]dt-Y^1(t)dW(t)\\
& & -\int_{\mathcal{E}}Z^1(t, e)\tilde
N(dt,de)+g_x^*(t)X^1(t)\overleftarrow{d}B(t)+h(t)dA(t),\\
X^1(T)&=&0,
\end{array}
\right.
\end{align}
where $f^*(t, e)=f(t, X^*(t), Y^*(t), Z^*(t,e))$ and $(X^1(t), Y^1(t), Z^1(t,e))$ are the derivative processes defined by
\begin{align*}
X^1(t)=\lim_{\rho\to0}\frac{1}{\rho}(X^\rho(t)-X^*(t)), \ \ \ t\in[0,T],
\end{align*}
\begin{align*}
 Y^1(t)=\lim_{\rho\to0}\frac{1}{\rho}(Y^\rho(t)-Y^*(t)), \ \ \ t\in[0,T],
\end{align*}
\begin{align*} Z^1(t,e)=\lim_{\rho\to0}\frac{1}{\rho}(Z^\rho(t,e)-Z^*(t,e)), \ \ \ t\in[0,T].
\end{align*}
Here, we  suppose that $\frac{\partial f}{\partial z}(t,e)>-1$, a.s. for all $(t,x,y,z)$.
 \begin{remark}
From Theorem \ref{TLGBDSDE}, we get
\begin{align}\label{SVAR}
X^1(t)=&E\left[\int_t^T\Gamma(t,s)h(s)dA(s) +\sum_{t<s\leq T}\Gamma(t,s)\int_{\mathcal{E}}f_z^*(s,e)N(\{s\},de)h(s)\Delta A(s)\Big|\mathcal{G}_t\right],  \ \ \ t\in[0, T],
\end{align}
where
\begin{align*}
\Gamma(t,s)=&\exp\left(\int_t^s\left\{f_x^*(r)-\frac{1}{2}g_x^*(r)^2-\frac{1}{2}f_y^*(r)^2\right\}dr+\int_t^sf_y^*(r)dW(r)
+\int_t^sg_x^*(r)\overleftarrow{d}B(r)\right.\notag\\
&\left. +\int_t^s\int_{\mathcal{E}}\ln (1+f_z^*(r,e))\tilde{N}(dr,de)+\int_t^s\int_{\mathcal{E}}\{\ln(1+f_z^*(r,e))-f_z^*(r,e)\}\nu(de)dr\right).
\end{align*}
\end{remark}

By the similarly calculation used in Wu and Liu \cite{Wu2018}, we can present the estimates of the perturbed state process $(X^{\rho},Y^{\rho},Z^{\rho})$.
\begin{lemma}\label{lem3}
\begin{align*}
&\lim_{\rho\to 0}
E\left[\sup_{t\in[0,T]}\Big|\frac{X^{\rho}(t)-X^*(t)}{\rho}-X^1(t)\Big|^2\right]=0,\\
&\lim_{\rho\to 0} E\left[\int_t^T\Big|\frac{Y^{\rho}(s)-Y^*(s)}{\rho}-Y^1(s)\Big|^2ds\right]=0, \ \ \ t\in[0,T],\\
&\lim_{\rho\to 0}
E\left[\int_t^T\Big\|\frac{Z^{\rho}(s,\cdot)-Z^*(s,\cdot)}{\rho}-Z^1(s,\cdot)\Big\|_{\nu}^2ds\right]=0,
\ \ \ t\in[0,T].
\end{align*}
\end{lemma}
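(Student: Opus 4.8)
The plan is to prove these first-order estimates by showing that the remainder processes
$$\widetilde{X}^\rho:=\frac{X^\rho-X^*}{\rho}-X^1,\qquad \widetilde{Y}^\rho:=\frac{Y^\rho-Y^*}{\rho}-Y^1,\qquad \widetilde{Z}^\rho:=\frac{Z^\rho-Z^*}{\rho}-Z^1$$
vanish in the norm of $V^2(\mathbb{F},[0,T];\mathbb{R}\times\mathbb{R}^d\times L^2(\mathcal{E},\nu))$ as $\rho\to0$. First I would write down the GBDSDE with jumps solved by $(\widetilde{X}^\rho,\widetilde{Y}^\rho,\widetilde{Z}^\rho)$: dividing the difference of the state equations (\ref{SCGBDSDE}) for the controls $A^\rho=A^*+\rho A$ and $A^*$ by $\rho$ and subtracting the variational equation (\ref{varequ}), the terminal data cancel to give $\widetilde{X}^\rho(T)=0$, and—precisely because the two controls differ by $\rho A$—the $h\,dA$ source terms cancel as well, so $(\widetilde{X}^\rho,\widetilde{Y}^\rho,\widetilde{Z}^\rho)$ solves a linear GBDSDE with no $dA$ source term. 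To linearise the driver I would expand each increment of $f$ by the mean value theorem along the segment $\Theta^{\rho,\lambda}:=(1-\lambda)(X^*,Y^*,Z^*)+\lambda(X^\rho,Y^\rho,Z^\rho)$,
$$\frac{f^\rho-f^*}{\rho}=\int_0^1\Big[f_x^\lambda\,\frac{X^\rho-X^*}{\rho}+f_y^\lambda\,\frac{Y^\rho-Y^*}{\rho}+\int_{\mathcal E}f_z^\lambda\,\frac{Z^\rho-Z^*}{\rho}\,\nu(de)\Big]d\lambda,$$
where $f_x^\lambda,f_y^\lambda,f_z^\lambda$ are the partial derivatives evaluated at $\Theta^{\rho,\lambda}$, and similarly for $g$ (which depends on the spatial variable only). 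Inserting $\frac{X^\rho-X^*}{\rho}=\widetilde{X}^\rho+X^1$ and its analogues, the driver of $\widetilde{X}^\rho$ splits into a part linear in $(\widetilde{X}^\rho,\widetilde{Y}^\rho,\widetilde{Z}^\rho)$ with bounded coefficients plus a residual $R^\rho$ assembled from the increments $(f_x^\lambda-f_x^*)X^1$, $(f_y^\lambda-f_y^*)Y^1$ and $\int_{\mathcal E}(f_z^\lambda-f_z^*)Z^1\nu(de)$ integrated over $\lambda$, with an analogous residual $S^\rho$ sitting in the $\overleftarrow{d}B$ coefficient.

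Next I would apply the It\^o formula of Proposition \ref{Ito} to $|\widetilde{X}^\rho(s)|^2$ on $[t,T]$ and take expectations. Since $\widetilde{X}^\rho$ carries no $dA$-integrand, the predictable-jump and $dA$ contributions drop out, the $dW$, $\overleftarrow{d}B$ and $\tilde N$ stochastic integrals have zero mean, and $E\int_t^T|\widetilde{Y}^\rho|^2ds$ and $E\int_t^T\|\widetilde{Z}^\rho(s,\cdot)\|_\nu^2ds$ appear on the left against the (positive) squared $\overleftarrow{d}B$-coefficient on the right. Here the structure is favourable: because $g$ depends on $x$ only, its linearised coefficient multiplies $\widetilde{X}^\rho$ alone, so that term is bounded by $C|\widetilde{X}^\rho|^2+C|S^\rho|^2$ and no smallness condition on the backward-noise coefficient is required. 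Using boundedness of $f_x,f_y,f_z,g_x$ and Young's inequality to absorb the cross terms, I obtain
$$E|\widetilde{X}^\rho(t)|^2+\frac{1}{2}E\int_t^T\big(|\widetilde{Y}^\rho|^2+\|\widetilde{Z}^\rho(s,\cdot)\|_\nu^2\big)ds\le C\,E\int_t^T|\widetilde{X}^\rho(s)|^2ds+C\,E\int_0^T\big(|R^\rho|^2+|S^\rho|^2\big)ds,$$
whence Gronwall's lemma gives $\sup_{t\in[0,T]}E|\widetilde{X}^\rho(t)|^2\le C\,E\int_0^T(|R^\rho|^2+|S^\rho|^2)ds$ together with the bounds on the $\widetilde{Y}^\rho$- and $\widetilde{Z}^\rho$-integrals. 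The first limit, with the supremum inside the expectation, then follows from the Burkholder--Davis--Gundy inequality applied to the martingale part of $\widetilde{X}^\rho$, just as in the preceding lemma.

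The heart of the matter is to show $E\int_0^T(|R^\rho|^2+|S^\rho|^2)ds\to0$ as $\rho\to0$. The preceding lemma provides $(X^\rho,Y^\rho,Z^\rho)\to(X^*,Y^*,Z^*)$ in $L^2$, so along any subsequence the interpolated arguments $\Theta^{\rho,\lambda}$ converge a.e.\ to $(X^*,Y^*,Z^*)$; by continuity of the partial derivatives this forces $f_x^\lambda-f_x^*\to0$ a.e.\ (and likewise the $y$-, $z$- and $g$-derivatives), while boundedness of these derivatives together with $X^1,Y^1,Z^1\in V^2$ supplies an integrable majorant. Dominated convergence then yields the claim along the subsequence, and since every subsequence admits such a further subsequence the whole family converges. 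This residual analysis—matching the interpolated coefficients against the fixed limit processes $X^1,Y^1,Z^1$ and justifying the limit under the integral sign—is the main technical obstacle; the remainder is a routine It\^o-plus-Gronwall computation paralleling the preceding lemma and the estimates of Wu and Liu \cite{Wu2018}.
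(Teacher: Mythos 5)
Your proposal is correct and follows exactly the route the paper intends: the paper gives no proof of Lemma \ref{lem3} at all, merely deferring to ``the similar calculation used in Wu and Liu \cite{Wu2018}'', and that calculation is precisely your scheme --- write the linear GBDSDE solved by the remainder triple (with the $h\,dA$ source terms cancelling because $A^{\rho}-A^{*}=\rho A$ and the terminal data vanishing), apply the It\^{o} formula of Proposition \ref{Ito} plus Young and Gronwall, recover the supremum via Burkholder--Davis--Gundy, and send the Taylor residuals to zero by dominated convergence using the preceding lemma and the boundedness of $f_x,f_y,f_z,g_x$. No gaps; if anything, your treatment of the residual terms $R^{\rho},S^{\rho}$ is more explicit than what the paper records.
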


\begin{lemma}\label{varine}
The following variational inequality
holds
\begin{align}\label{7.10}
E\left[\int_0^TF_x^*(t)X^1(t)+G_x^*(0)X^1(0)+\int_0^TH(t)dA(t)\right]\geq0.
\end{align}
\end{lemma}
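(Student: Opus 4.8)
The plan is to derive \eqref{7.10} as the first-order optimality condition obtained from a one-sided convex variation of $A^*$. For $A\in\mathcal{A}$ and small $\rho>0$, assumption (A6.2) ensures the perturbed control $A^\rho=A^*+\rho A$ is admissible (and it is increasing as a sum of increasing processes), so optimality of $A^*$ gives $J(A^\rho)-J(A^*)\geq0$, whence $\rho^{-1}(J(A^\rho)-J(A^*))\geq0$ for every $\rho>0$. The argument then reduces to computing $\lim_{\rho\downarrow0}\rho^{-1}(J(A^\rho)-J(A^*))$ and identifying it with the left-hand side of \eqref{7.10}.

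First I would expand the difference quotient. Because $A^\rho=A^*+\rho A$ gives $dA^\rho=dA^*+\rho\,dA$, the singular-cost term is affine in $\rho$ and contributes exactly $\int_0^TH(t)\,dA(t)$, with no limiting procedure required; the running cost and the initial cost produce the difference quotients of $F$ and $G$, so that
\[
\frac{J(A^\rho)-J(A^*)}{\rho}=E\left[\int_0^T\frac{F(t,X^\rho(t))-F(t,X^*(t))}{\rho}\,dt+\frac{G(X^\rho(0))-G(X^*(0))}{\rho}+\int_0^TH(t)\,dA(t)\right].
\]
Using that $F$ and $G$ are continuously differentiable in $x$, a mean-value expansion rewrites the running-cost quotient as $F_x(t,X^*(t)+\theta_t(X^\rho(t)-X^*(t)))\cdot\rho^{-1}(X^\rho(t)-X^*(t))$ for some $\theta_t\in[0,1]$, and likewise for $G$ at $t=0$. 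Invoking Lemma \ref{lem3}, which provides the $L^2$-convergence $\rho^{-1}(X^\rho-X^*)\to X^1$ uniformly in $t$ (hence in particular at $t=0$), together with the continuity of $F_x,G_x$ and the estimate \eqref{7.1}, I would pass to the limit to obtain the pointwise limits $F_x^*(t)X^1(t)$ and $G_x^*(0)X^1(0)$.

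The main obstacle is justifying the interchange of the limit $\rho\downarrow0$ with the expectation and the $dt$-integral. I would handle this by a Cauchy--Schwarz splitting: the $L^2(\Omega\times[0,T])$-norm of the derivative quotient $\rho^{-1}(X^\rho-X^*)$ is bounded uniformly in $\rho$ by \eqref{7.1} and Lemma \ref{lem3}, while the $L^2$-norm of $F_x$ evaluated along the intermediate points $X^*+\theta_t(X^\rho-X^*)$ is controlled by the growth assumption on $F_x$ together with the uniform moment bound $\sup_{\rho}E[\sup_{t}|X^\rho(t)|^2]<\infty$; the same reasoning applies to $G_x$ at the initial time. This yields the uniform integrability needed to legitimize dominated convergence. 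Once the limit is computed, the inequality $\rho^{-1}(J(A^\rho)-J(A^*))\geq0$ passes to the limit and gives precisely \eqref{7.10}, completing the proof.
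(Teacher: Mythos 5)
Your proposal is correct and follows essentially the same route as the paper: the paper also identifies the left-hand side of \eqref{7.10} with $\frac{d}{d\rho}J(A^\rho)\big|_{\rho=0}=\lim_{\rho\downarrow0}\rho^{-1}[J(A^\rho)-J(A^*)]$ via Taylor's expansion and Lemma \ref{lem3}, and concludes nonnegativity from the optimality of $A^*$. Your write-up merely makes explicit the technical points (affineness of the singular cost in $\rho$, the mean-value argument, and the uniform integrability justifying the passage to the limit) that the paper leaves implicit.
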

\begin{proof}
From Lemma \ref{lem3} and using Taylor's expansion, we get
\begin{align*}
E\left[\int_0^TF_x^*(t)X^1(t)+G_x^*(0)X^1(0)+\int_0^TH(t)dA(t)\right]&=\frac{d}{d\rho}J(A^\rho(\cdot))\Big|_{\rho=0}\\
&=\lim_{\rho\downarrow0}\rho^{-1}[J(A^\rho(\cdot))-J(A^*(\cdot))]\\
&\geq0.
\end{align*}
\end{proof}

Now we can state the necessary maximum principle for the optimal singular control of
backward doubly stochastic system with general jumps.
\begin{theorem}\label{NSMP1}
We assert for almost all $t\in[0,T]$
\begin{align}\label{SME0}
H(t)-p^*(t)h(t)\left[1+\int_{\mathcal{E}}f_z^*(t,e)\tilde{N}(\{t\},de)\right]\geq 0, \ \  a.s.,
\end{align}
\begin{align}\label{SME00}
\left\{H(t)-p^*(t)h(t)\left[1+\int_{\mathcal{E}}f_z^*(t,e)\tilde{N}(\{t\},de)\right]\right\}\Delta A^*(t)= 0,\ \  a.s.,
\end{align}
\begin{align}\label{SME000}
H(t)-p^*(t)h(t)\geq0,\ \  a.s.,
\end{align}
\begin{align}\label{SME0000}
 \left[H(t)-p^*(t)h(t)\right]d{A^*}^c(t)=0, \ \  a.s.
\end{align}
\end{theorem}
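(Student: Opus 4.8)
The plan is to read off the stationarity conditions from the variational inequality of Lemma \ref{varine} by dualizing the two terms $E\left[\int_0^T F_x^*(t)X^1(t)\,dt\right]$ and $E[G_x^*(0)X^1(0)]$ against the adjoint process $p^*$. Concretely, I would apply the It\^o product formula (in the form of Proposition \ref{Ito}) to $p^*(t)X^1(t)$, inserting the variational equation \ref{varequ} for $dX^1$ and the adjoint equation \ref{ad} for $dp^*$, and then take expectation so that all genuine martingale terms drop out. The whole point of the adjoint dynamics is that its drift is designed to cancel, term by term, every contribution carrying a factor $X^1$, $Y^1$ or $Z^1$.

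Carrying out the cancellation, the $f_x^*$-terms from $dp^*$ and $dX^1$ cancel, the $f_y^*$-drift from $p^*\,dX^1$ is matched by the forward $dW$-covariation, the $f_z^*$-drift is matched by the predictable part of the $\tilde N$-covariation, and the $g_x^*$-term is matched by the $\overleftarrow{d}B$-covariation (here I must use the backward-It\^o sign convention of Proposition \ref{Ito}, under which the $\overleftarrow{d}B$-covariation enters with a minus sign). What survives is only $-F_x^*(t)X^1(t)\,dt$, the control term $-p^*(t-)h(t)\,dA(t)$, and the jump-covariation between the Poisson-martingale part of $p^*$ and the $A$-induced jump of $X^1$. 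Using the boundary data $X^1(T)=0$ and $p^*(0)=-G_x^*(0)$, and splitting $dA=dA^c+\sum\Delta A$, I expect the identity
\begin{align*}
E\left[\int_0^T F_x^*(t)X^1(t)\,dt+G_x^*(0)X^1(0)\right]=&-E\left[\int_0^T p^*(t)h(t)\,dA^c(t)\right]\\
&-E\left[\sum_{0<t\leq T}p^*(t)h(t)\left(1+\int_{\mathcal{E}}f_z^*(t,e)\tilde{N}(\{t\},de)\right)\Delta A(t)\right],
\end{align*}
where the extra bracketed factor in the jump sum is precisely the contribution of that jump-covariation (and where $p^*(t-)=p^*(t)$ at jumps of $A$, since the predictable jumps of $A$ a.s. avoid the inaccessible Poisson jumps).

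Substituting this identity into Lemma \ref{varine} gives, for every admissible direction $A$,
\begin{align*}
E\left[\int_0^T\left(H(t)-p^*(t)h(t)\right)dA^c(t)+\sum_{0<t\leq T}\left(H(t)-p^*(t)h(t)\left(1+\int_{\mathcal{E}}f_z^*(t,e)\tilde{N}(\{t\},de)\right)\right)\Delta A(t)\right]\geq0.
\end{align*}
From here I would localize. Taking point-mass directions $A(s)=\alpha\delta_t(s)$ permitted by (A6.1), with $\alpha\geq0$ bounded and $\mathcal{F}_t$-measurable, isolates a single jump and yields the nonnegativity (\ref{SME0}); restricting to absolutely continuous increasing directions isolates the continuous part and yields (\ref{SME000}). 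For the complementary-slackness identities I would invoke the two-sided admissibility (A6.2) with the direction $A=A^*$, so that $A^*+\rho A^*=(1+\rho)A^*$ remains admissible for $\rho$ near $0$; optimality of $A^*$ then forces the first variation to vanish, i.e. the displayed expectation equals $0$ when $A=A^*$. Since both integrands are nonnegative by (\ref{SME0}) and (\ref{SME000}), while $d{A^*}^c\geq0$ and $\Delta A^*\geq0$, the vanishing of the total forces each piece to vanish, giving (\ref{SME00}) and (\ref{SME0000}).

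The main obstacle is the jump-covariation bookkeeping in this doubly stochastic setting with a singular control. Obtaining the factor $\int_{\mathcal{E}}f_z^*(t,e)\tilde{N}(\{t\},de)\,\Delta A(t)$ correctly — it comes only from the product of the $\tilde N$-jump of $p^*$ with the control-induced jump of $X^1$ — is exactly what separates the jump conditions (\ref{SME0})--(\ref{SME00}) from their continuous counterparts (\ref{SME000})--(\ref{SME0000}); together with the reversed sign of the $\overleftarrow{d}B$-covariation, this is the part of the calculation demanding the most care, and it is where I would verify the argument most closely.
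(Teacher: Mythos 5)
Your proposal follows essentially the same route as the paper: apply It\^{o}'s formula to $p^*(t)X^1(t)$, substitute the resulting duality identity into the variational inequality of Lemma \ref{varine}, and then localize with point-mass directions for (\ref{SME0}), absolutely continuous directions for (\ref{SME000}), and two-sided perturbations of $A^*$ for the complementary-slackness identities. The only cosmetic differences are that the paper tests with $\pm{A^*}^d$ and $\pm{A^*}^c$ separately rather than with $\pm A^*$ as a whole, and your parenthetical remark that the jumps of $A$ a.s.\ avoid the Poisson atoms sits uneasily with retaining the cross term $\int_{\mathcal{E}}f_z^*(t,e)\tilde N(\{t\},de)\,\Delta A(t)$ --- but since you do keep that term, your identity and conclusions coincide with the paper's.
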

\begin{proof}
Applying It\^{o}'s formula to $p^*(t)X^1(t)$, we find that
\begin{align*}
E[g_x^*(0)X^1(0)]=&-E[p^*(0)X^1(0)]\notag\\
=&-E\left[\int_0^Tp^*(t)h(t)dA(t)\right]-E\left[\int_0^TX^1(t)f_x^*(t)dt\right]\notag\\
&-E\left[\sum_{0<t\leq T}\int_{\mathcal{E}}f_z^*(t,e)p^*(t)h(t)\tilde{N}(\{t\},de)\Delta A(t)\right].
\end{align*}
Hence, by Lemma \ref{varine}, we obtain for all $A\in\mathcal A$
\begin{align}\label{SME}
E\left[\int_0^T(H(t)-p^*(t)h(t))dA(t)\right]-E\left[\sum_{0<t\leq T}\int_{\mathcal{E}}f_z^*(t,e)p^*(t)h(t)\tilde{N}(\{t\},de)\Delta A(t)\right]\geq 0.
\end{align}
In particular, fix $t\in[0,T]$ and choose $dA(s)=\alpha(\omega)\delta_t(s)$, where $s\in[0,T]$, $\alpha(\omega)\geq0$ is $\mathcal{F}_t$-measurable and bounded. Then (\ref{SME}) has the form
\begin{align*}
E\left[\left\{H(t)-p^*(t)h(t)\left[1+\int_{\mathcal{E}}f_z^*(t,e)\tilde{N}(\{t\},de)\right]\right\}\alpha\right]\geq 0.
\end{align*}
Since this holds for all bounded $\mathcal{F}_t$-measurable $\alpha$, we get (\ref{SME0}).

Furthermore,  choose $A(t)={A^*}^d(t)$, the purely discontinuous part of $A^*$. Then it follows from (\ref{SME}) that
\begin{align}\label{SME1}
E\left[\sum_{0<t\leq T}\left\{H(t)-p^*(t)h(t)\left[1+\int_{\mathcal{E}}f_z^*(t,e)\tilde{N}(\{t\},de)\right]\right\}\Delta A^*(t)\right]\geq 0.
\end{align}
On the other hand, choosing $A(t)=-{A^*}^d(t)$ in (\ref{SME}) gives
\begin{align}\label{SME2}
E\left[\sum_{0<t\leq T}\left\{H(t)-p^*(t)h(t)\left[1+\int_{\mathcal{E}}f_z^*(t,e)\tilde{N}(\{t\},de)\right]\right\}\Delta A^*(t)\right]\leq 0.
\end{align}
Combining (\ref{SME1}) and (\ref{SME2}) implies
\begin{align*}
E\left[\sum_{0<t\leq T}\left\{H(t)-p^*(t)h(t)\left[1+\int_{\mathcal{E}}f_z^*(t,e)\tilde{N}(\{t\},de)\right]\right\}\Delta A^*(t)\right]= 0.
\end{align*}
Since $A^*(t)$ is $\mathcal{F}_t$-measurable, from (\ref{SME0}) and $\Delta A^*(t)\geq0$, we obtain (\ref{SME00}).

In addition, choosing $dA(t)=a(t)dt$, $t\in[0,T]$, where $a(t)\geq0$ is continuous and  $a(t)$ is $\mathcal{F}_t$-measurable for a.e. $t\in[0,T]$, we arrive at
\begin{align*}
E\left[\int_0^T[H(t)-p^*(t)h(t)]a(t)dt\right]\geq0.
\end{align*}
Since this holds for all $\mathcal{F}_t$-measured processes $a(t)$, we get (\ref{SME000}).

Moreover, choosing $A(t)={A^*}^c(t)$, the continuous part of $A^*$, we derive from (\ref{SME}) that
\begin{align}\label{SME3}
E\left[\int_0^T(H(t)-p^*(t)h(t))d{A^*}^c(t)\right]\geq 0.
\end{align}
On the other hand, choosing $A(t)=-{A^*}^c(t)$ in (\ref{SME}) we deduce
\begin{align}\label{SME4}
E\left[\int_0^T(H(t)-p^*(t)h(t))d{A^*}^c(t)\right]\leq 0.
\end{align}
From (\ref{SME3}) and (\ref{SME4}), we get (\ref{SME0000}).
The proof is complete.
\end{proof}

Set
\begin{align*}
\mathcal{U}(t)=&h(t)\left[\int_0^tF_x^*(s)\Gamma(s,t)ds+\Gamma(0,t)G_x^*(0)\right]+H(t),\\
\mathcal{V}(t)=&h(t) \left[\int_0^tF_x^*(s)\Gamma(s,t)ds+\Gamma(0,t)G_x^*(0)\right]\left[1+\int_{\mathcal{E}}f_z^*(t,e)N(\{t\},de)\right]+H(t).
\end{align*}
Then we have the following results.
\begin{theorem}\label{NSMP2}
Suppose that $A^*$ is optimal for the problem (\ref{problem}). Then for almost all $t\in[0,T]$, we have
\begin{align*}
&\mathcal{U}(t)\geq0, \ \ \  \mbox{and} \ \ \  \mathcal{U}(t)d{A^*}^c(t)=0, \ \ \ a.s., \\
&\mathcal{V}(t)\geq0, \ \ \  \mbox{and} \ \ \  \mathcal{V}(t)\Delta A^*(t)=0,\ \ \ a.s.
\end{align*}
\end{theorem}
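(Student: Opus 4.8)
The plan is to reduce the statement to the four relations (\ref{SME0})--(\ref{SME0000}) already established in Theorem \ref{NSMP1}, by producing a closed-form expression for the adjoint process $p^*$ and substituting it into the definitions of $\mathcal{U}$ and $\mathcal{V}$. First I would write the adjoint equation (\ref{ad}) out explicitly. Since $H$ and $h$ do not depend on the state, differentiating the Hamiltonian (\ref{Hami}) gives $\mathcal{H}_x(t)(dt,dA(t))=[F_x^*(t)-f_x^*(t)p^*(t)-g_x^*(t)q^*(t)]\,dt$, $\mathcal{H}_y(t)=-f_y^*(t)p^*(t)$ and $\mathcal{H}_z(t,e)=-f_z^*(t,e)p^*(t)$, so that (\ref{ad}) becomes the linear forward doubly stochastic equation
\begin{align*}
dp^*(t)=&\,[-F_x^*(t)+f_x^*(t)p^*(t)+g_x^*(t)q^*(t)]\,dt-q^*(t)\overleftarrow{d}B(t)\\
&+f_y^*(t)p^*(t)\,dW(t)+\int_{\mathcal{E}}f_z^*(t,e)p^*(t)\tilde{N}(dt,de),
\end{align*}
with $p^*(0)=-G_x^*(0)$.

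Next I would solve this equation explicitly, in the same spirit as the linear GBDSDE of Theorem \ref{TLGBDSDE}. The multiplicative coefficients $f_x^*,f_y^*,g_x^*,f_z^*$ are exactly those appearing in the fundamental solution $\Gamma(\cdot,\cdot)$ of the variational equation (see the remark following (\ref{SVAR}) and formula (\ref{Gamma})). Guessing $q^*(t)=-g_x^*(t)p^*(t)$ turns the backward integral $-q^*\overleftarrow{d}B$ into $g_x^*p^*\overleftarrow{d}B$ and the drift contribution $g_x^*q^*\,dt$ into $-(g_x^*)^2p^*\,dt$, which is precisely the It\^o correction carried by $\Gamma$; a variation-of-constants argument (or, rigorously, the It\^o--Ventzell formula of Proposition \ref{ItoVentzell} applied to the candidate) then yields
$$p^*(t)=-\Gamma(0,t)G_x^*(0)-\int_0^t\Gamma(s,t)F_x^*(s)\,ds,\qquad 0\le t\le T.$$
I would verify this by checking the initial condition $p^*(0)=-G_x^*(0)$ and differentiating in $t$, using $d_t\Gamma(s,t)=\Gamma(s,t)[(f_x^*-(g_x^*)^2)dt+f_y^*dW+g_x^*\overleftarrow{d}B+\int_{\mathcal{E}}f_z^*\tilde{N}]$ together with $\Gamma(t,t)=1$ and the cocycle identity $\Gamma(s,t)=\Gamma(0,t)\Gamma(0,s)^{-1}$; by the assumed uniqueness of the solution of (\ref{ad}) this is the adjoint process.

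With this formula in hand the conclusion is immediate. Multiplying by $h(t)$ gives $-p^*(t)h(t)=h(t)\big[\int_0^tF_x^*(s)\Gamma(s,t)\,ds+\Gamma(0,t)G_x^*(0)\big]$, so that $\mathcal{U}(t)=H(t)-p^*(t)h(t)$, and since at a jump time the atom of $\tilde{N}$ coincides with that of $N$ (the compensator $\nu(de)\,dt$ is atomless), also $\mathcal{V}(t)=H(t)-p^*(t)h(t)\big[1+\int_{\mathcal{E}}f_z^*(t,e)\tilde{N}(\{t\},de)\big]$. Now I would invoke Theorem \ref{NSMP1}: relation (\ref{SME000}) is exactly $\mathcal{U}(t)\ge0$, relation (\ref{SME0000}) is $\mathcal{U}(t)d{A^*}^c(t)=0$, relation (\ref{SME0}) is $\mathcal{V}(t)\ge0$, and relation (\ref{SME00}) is $\mathcal{V}(t)\Delta A^*(t)=0$, which is the assertion.

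I expect the main obstacle to be Step two, the explicit solution of the adjoint forward doubly stochastic equation: one must treat the backward It\^o integral $\overleftarrow{d}B$ jointly with the auxiliary process $q^*$, confirm that the identification $q^*=-g_x^*p^*$ reproduces both the backward-integrand and the drift correction carried by $\Gamma$, and justify the differentiation of the $\Gamma(s,t)$-weighted integral (this is where the It\^o--Ventzell formula and the time-reversal machinery referenced for the well-posedness of (\ref{ad}) enter). Once the representation of $p^*$ is secured, the remainder is a direct identification with the conclusions of Theorem \ref{NSMP1}.
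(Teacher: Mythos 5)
Your proposal is correct, but it follows a genuinely different route from the paper's. The paper never touches the adjoint process in this proof: it takes the variational inequality of Lemma \ref{varine}, substitutes the explicit representation (\ref{SVAR}) of $X^1$ in terms of $\Gamma(s,t)$, interchanges the order of integration and uses the tower property to rewrite the derivative of the cost as $E\left[\int_0^T\mathcal{U}(t)dA^c(t)+\sum_{0<t\le T}\mathcal{V}(t)\Delta A(t)\right]\ge 0$ for every admissible perturbation $A$, and then repeats verbatim the perturbation choices of Theorem \ref{NSMP1} (point masses $\alpha\delta_t$, $\pm {A^*}^d$, absolutely continuous controls, $\pm {A^*}^c$) to extract the four conclusions. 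You instead solve the adjoint FDSDE (\ref{ad}) in closed form via the ansatz $q^*=-g_x^*p^*$ and variation of constants, identify $\mathcal{U}(t)=H(t)-p^*(t)h(t)$ and $\mathcal{V}(t)=H(t)-p^*(t)h(t)\bigl[1+\int_{\mathcal{E}}f_z^*(t,e)\tilde{N}(\{t\},de)\bigr]$ (the substitution $N(\{t\},de)=\tilde{N}(\{t\},de)$ being justified since the compensator $\nu(de)dt$ is atomless in $t$), and then read off the statement from (\ref{SME0})--(\ref{SME0000}). What your approach buys is economy -- Theorem \ref{NSMP1} is reused wholesale and no second round of perturbation arguments is needed -- and it also supplies a genuine proof of the formula $p^*(t)=-\int_0^tF_x^*(s)\Gamma(s,t)ds-\Gamma(0,t)G_x^*(0)$, which the paper only records as a remark deduced informally \emph{after} both theorems. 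What it costs is that the whole argument now leans on the standing assumption that (\ref{ad}) has a unique solution in $L^2(\mathbb{F},[0,T])\times L^2(\mathbb{F},[0,T];\mathbb{R}^m)$, and on verifying that your candidate actually lies in these spaces (square-integrability of the stochastic exponential $\Gamma$, which requires the boundedness of the derivatives $f_x^*,f_y^*,g_x^*,f_z^*$ that the paper uses implicitly) and is $\mathcal{F}_t$-measurable in the paper's non-filtration sense; these are exactly the points the paper defers to the time-reversal machinery of \cite{Jacod1988,Wu2018}, so your proof is no less rigorous than the paper's own treatment, but you should flag them explicitly rather than only in the closing caveat about Step two.
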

\begin{proof}
Note that
\begin{align}\label{VEQ}
\frac{d}{d\rho}J(A^\rho(\cdot))\Big|_{\rho=0}=E\left[\int_0^TF_x(t, X^*(t))X^1(t)\right] +E[G_x(X^*(0))X^1(0)]+E\left[\int_0^TH(t)dA(t)\right].
\end{align}
Substituting (\ref{SVAR}) to (\ref{VEQ}) gives
\begin{align*}
\frac{d}{d\rho}&J(A^\rho(\cdot))\Big|_{\rho=0}=E\left[\int_0^TH(t)dA(t)\right]+E\left[\int_0^TF_x^*(t)E\left[\int_t^T\Gamma(t,s)h(s)dA(s)|\mathcal{G}_t\right]dt\right]\\
&\ \ \ +E\left[\int_0^TF_x^*(t)E\left[\sum_{t<s\leq T}\Gamma(t,s)\int_{\mathcal{E}}f_z^*(s,e)N(\{s\},de)h(s)\Delta A(s)|\mathcal{G}_t\right]dt\right]\\
&\ \ \ +E\left[G_x^*(0)E\left[\int_0^T\Gamma(0,s)h(s)dA(s)+\sum_{0<s\leq T}\Gamma(0,s)\int_{\mathcal{E}}f_z^*(s,e)N(\{s\},de)h(s)\Delta A(s)|\mathcal{G}_t\right]\right]\\
&=E\left[\int_0^TH(t)dA(t)\right]+E\left[\int_0^Th(t)\left(\int_0^tF_x^*(s)\Gamma(s,t)ds\right)dA(t)\right]\\
&\ \ \ +E\left[\sum_{0<t\leq T}h(t)\int_0^tF_x^*(s)\Gamma(s,t)ds\int_{\mathcal{E}}f_z^*(t,e)N(\{t\},de)\Delta A(t)\right]\\
&\ \ \ +E\left[\int_0^TG_x^*(0)\Gamma(0,t)h(t)dA(t)+\sum_{0<t\leq T}\Gamma(0,t)\int_{\mathcal{E}}f_z^*(t,e)N(\{t\},de)h(t)\Delta A(t)\right]\\
&=E\left[\int_0^T\mathcal{U}(t)dA^c(t)+\sum_{0<t\leq T}\mathcal{V}(t)\Delta A(t)\right].
\end{align*}
Proceeding as in the proof of Theorem \ref{NSMP1}, we complete the proof.
\end{proof}
\begin{remark}
From Theorem \ref{NSMP1} and Theorem \ref{NSMP2}, we can get
\begin{align*}
p^*(t)=-\int_0^tF_x^*(s)\Gamma(s,t)ds-\Gamma(0,t)G_x^*(0).
\end{align*}
\end{remark}


\bibliographystyle{elsarticle-num}
\bibliography{SVS}

\end{document}